\titleformat*{\section}{\Large\centering\normalfont\scshape}
\titleformat{\subsection}[runin]{\large\raggedright\itshape}{\thesubsection}{1em}{}
\theoremstyle{plain}  
	\newtheorem{theorem}{Theorem}
	\newtheorem{proposition}[theorem]{Proposition}
	\newtheorem{lemma}[theorem]{Lemma}
	\newtheorem{corollary}[theorem]{Corollary}
\theoremstyle{definition}
	\newtheorem{definition}[theorem]{Definition}
		\newenvironment{example*}{\exampleEnv}{\endexampleEnv}
		\newenvironment{remark}  
			{\pushQED{\qed}\remarkEnv}
			{\popQED\endremarkEnv}
		\newenvironment{remark*}{\remarkEnv}{\endremarkEnv}
			\newenvironment{solution}
				{\pushQED{\qed}\solutionEnv}
				{\popQED\endsolutionEnv}
			\newenvironment{solution*}{\solutionEnv}{\endsolutionEnv}
			\newenvironment{solution*}{}{}
		\newtheorem*{comment}{\normalfont\emph{Comment}}
\newcommand{\R}{\mathbb{R}}  
\newcommand{\N}{\mathbb{N}}  
\def\<{\left\langle}  
\def\>{\right\rangle}
\numberwithin{theorem}{subsection}
\numberwithin{equation}{subsection}
\renewcommand{\Pr}[1]{\mathbb{P}\left[#1\right]}
\newcommand{\Ex}[1]{\mathbb{E}\left[#1\right]}
\newcommand{\Var}[1]{\mathbb{V}\left[#1\right]}
\begin{document}

\title{Finite-sample properties of the trimmed mean}
\author{Roberto I. Oliveira\thanks{IMPA, Rio de Janeiro, RJ, Brazil. \texttt{rimfo@impa.br}. Supported by CNPq grants 432310/2018-5 (Universal) and 304475/2019-0 (Pro- dutividade em Pesquisa), and FAPERJ grants 202.668/2019 (Cientista do Nosso Estado) and 290.024/2021 (Edital Inteligência Artificial).},~ Paulo Orenstein\thanks{IMPA, Rio de Janeiro, Brazil. \texttt{pauloo@impa.br}. Supported by FAPERJ grant SEI- 260003/001545/2022. }~ and Zoraida F. Rico\thanks{University of Washington, Seattle, WA, USA. \texttt{zoraida@impa.br}}}
\date{} 
\maketitle

\begin{abstract}The trimmed mean of $n$ scalar random variables from a distribution $P$ is the variant of the standard sample mean where the $k$ smallest and $k$ largest values in the sample are discarded for some parameter $k$. In this paper, we look at the finite-sample properties of the trimmed mean as an estimator for the mean of $P$. Assuming finite variance, we prove that the trimmed mean is ``sub-Gaussian'' in the sense of achieving Gaussian-type concentration around the mean. Under slightly stronger assumptions, we show the left and right tails of the trimmed mean satisfy a strong ratio-type approximation by the corresponding Gaussian tail, even for very small probabilities of the order $e^{-n^c}$ for some $c>0$. In the more challenging setting of weaker moment assumptions and adversarial sample contamination, we prove that the trimmed mean is minimax-optimal up to constants.  
\end{abstract}


\section{Introduction}

We consider the fundamental problem of estimating the expectation of a one-dimensional random variable from an i.i.d.\ random sample. The sample mean is the standard estimator for this task. However, it can be very far from the best possible estimator when the data is (relatively) heavy-tailed or has outliers \cite{catoni2012challenging,dllo2016,lee2022optimal}. 

This paper studies the trimmed mean, a classical alternative to the sample mean. To define it, let $X_1,\dots,X_n$ be a random sample and denote by $X_{(1)}\leq X_{(2)}\leq \dots\leq X_{(n)}$ its order statistics. Given an integer $0\leq k<n/2$, the $k$-trimmed-mean of the sample is given by:
\[\overline{X}_{n,k}:=\frac{1}{n-2k}\sum_{i=k+1}^{n-k}X_{(i)}.\]
That is, $\overline{X}_{n,k}$ is the arithmetic mean of sample points after the $k$ largest and $k$ smallest values of the sample are removed. $\overline{X}_{n,k}$ equals the standard sample mean for $k=0$, whereas for $k=\lceil n/2\rceil-1$ it is a sample median. Intermediate choices of $k$ will lead to different trade-offs between bias and variance. 

Starting in the late Sixties, the asymptotic theory of the trimmed mean was analyzed in a number of papers \cite{Stigler1973,Jaeckel1971,Hall1981,LegerRomano1990,Jureckova1981} that are discussed in \S \ref{sub:trimmed} below. One focus of this literature is on the regime where $n\to +\infty$ and $k/n\to \eta \in (0,1)$; see, for instance, \cite{Stigler1973} for the asymptotic distribution of $\overline{X}_{n,k}$ in this setting.  

In this paper, we take a fresh look at the trimmed mean. Our main results are as follows:
\begin{itemize}
\item[\S \ref{sub:intro:subgaussian}] When the variance is finite, the trimmed mean is what is often called a ``sub-Gaussian estimator''~\cite{dllo2016} in the literature. Under mild additional conditions, this estimator has sharp constants and~``works''~for multiple confidence levels. Interestingly, these results are achieved by trimming a vanishing fraction of sample points. 
\item[\S \ref{sub:intro:CLT}] Under higher moment conditions, one can show that the trimmed mean satisfies a very strong form of the Central Limit Theorem, even relatively far in the tail of the distribution. This allows us to build $(1-\alpha)$-confidence intervals up to $\alpha=e^{-o(n^{c})}$ for some $c>0$. 
\item[\S \ref{sub:intro:minimax}] Additionally, the trimmed mean is minimax-optimal (up to constant factors) in settings allowing for heavier tails (e.g. possibly infinite variance) and adversarial data contamination.
\end{itemize}

We now discuss these findings in more detail.

\subsection{Sub-Gaussian properties.}\label{sub:intro:subgaussian}
Loosely speaking, a sub-Gaussian mean estimator $\hat{\mu}=\hat{\mu}(X_1,\dots,X_n)$ can estimate the mean $\mu$ of a random sample $X_1,\dots,X_n$ with Gaussian-type error bounds
\begin{equation}\label{eq:tailformulation}\Pr{|\hat{\mu}-\mu|\geq \frac{\sigma\,(c_1+c_2\,x)}{\sqrt{n}}}\leq C\,e^{-\frac{x^2}{2}},\end{equation}
for all $x>0$ in a suitable range, under the sole assumption that the variance $\sigma^2$ is finite; here, $c_1,c_2,C>0$ are universal constants independent of $\mu$, $\sigma^2$ or any other properties of the data generating mechanism. An equivalent formulation, which is perhaps more common in the literature, is that, for a given target confidence level $1-\alpha\in (0,1)$, the estimator should achieve \[\Pr{|\hat{\mu}-\mu|\leq \frac{\sigma\,(c_1 +c_2\sqrt{2\log(C/\alpha)})}{\sqrt{n}}}\geq 1-\alpha;\]
however, we will mostly work with the ``tail formulation''~given by  (\ref{eq:tailformulation}).

Catoni's seminal paper \cite{catoni2012challenging} seems to have been the first to pose the question of existence of sub-Gaussian estimators for finite samples. The paper shows that, while the sample mean is not sub-Gaussian for any nontrivial range of $x$, a suitable estimator achieves (\ref{eq:tailformulation}) with optimal constants for all $x=o(\sqrt{n})$ -- or equivalently, $\log(1/\alpha)=o(n)$ --, at least when $\sigma$ is known. The same paper shows that no estimator can achieve a value of $c_2$ smaller than $1$ (cf. Proposition 6.1). Later work proved positive and negative results about such estimators \cite{dllo2016} and obtained the optimal $c_2=1+o(1)$ for unknown variance \cite{lee2022optimal}. A series of papers by Minsker has looked at sub-Gaussian properties at variants of the so-called median-of-means construction \cite{MinskerUstat,minsker2021robust}. 

In what follows, we argue that the trimmed mean also achieves sub-Gaussian bounds. We first show that, in the most general setting, the trimmed mean is sub-Gaussian with suboptimal constants.

\begin{theorem}[Proof in \S \ref{subsub:proof:allsubgaussian}]\label{thm:allsubgaussian}Consider i.i.d.\ random variables $X_1,\dots,X_n$ with a well-defined mean $\mu$ and variance $\sigma^2<+\infty$. Take $0<x\leq \sqrt{{n}/{(\sqrt{2}+1)^2}-2}$ and consider the trimmed mean $\overline{X}_{n,k}$ with trimming parameter $k(x):=\lceil x^2/2\rceil$. Then: 
\[\Pr{|\overline{X}_{n,k(x)}-\mu|>\frac{(3\sqrt{2}+8+(4+4\sqrt{2})\,x)\,\sigma}{\sqrt{n}}}\leq 4\,\exp\left(-\frac{x^2}{2}\right).\]
\end{theorem}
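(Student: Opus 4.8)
The plan is to compare $\overline{X}_{n,k}$ with the sample mean of a \emph{truncated} copy of the data and then exploit the boundedness of the truncated variables. Fix levels $a<\mu<b$, symmetric about the mean in the sense $b-\mu=\mu-a=:\tau$ (with $\tau$ chosen at the very end), put $Y_i:=\max\{a,\min\{b,X_i\}\}$, $\mu_{a,b}:=\Ex{Y_1}$ and $\overline{Y}_n:=\tfrac1n\sum_{i=1}^nY_i$. Since $x\mapsto\max\{a,\min\{b,x\}\}$ is nondecreasing, the order statistics of $(Y_i)$ are exactly the truncations of the $X_{(i)}$. On the ``counting event''
\[\mathcal E:=\bigl\{\#\{i:X_i>b\}\le k\bigr\}\cap\bigl\{\#\{i:X_i<a\}\le k\bigr\}\]
the first condition forces $X_{(n-k)}\le b$ and the second forces $X_{(k+1)}\ge a$, so all $n-2k$ kept order statistics lie in $[a,b]$ and hence equal $Y_{(k+1)},\dots,Y_{(n-k)}$; as the $2k$ trimmed $Y$-values also lie in $[a,b]$, summing gives the deterministic sandwich
\[\frac{n\overline{Y}_n-2kb}{n-2k}\ \le\ \overline{X}_{n,k}\ \le\ \frac{n\overline{Y}_n-2ka}{n-2k}\qquad\text{on }\mathcal E.\]

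Rearranging the right-hand inequality, on $\mathcal E$ one has
\[\overline{X}_{n,k}-\mu\ \le\ \frac{n}{n-2k}\bigl(\overline{Y}_n-\mu_{a,b}\bigr)+\frac{n}{n-2k}\bigl(\mu_{a,b}-\mu\bigr)+\frac{2k}{n-2k}\,\tau,\]
and repeating the computation for $-X_1,\dots,-X_n$ (whose $k$-trimmed mean is $-\overline{X}_{n,k}$) gives the matching bound for $\mu-\overline{X}_{n,k}$. It thus suffices to bound each of the three summands and $\Pr{\mathcal E^c}$. The hypothesis $x\le\sqrt{n/(\sqrt2+1)^2-2}$ enters only through the prefactors: using $(\sqrt2+1)^{-2}=3-2\sqrt2$ it gives $2k\le x^2+2\le(3-2\sqrt2)n$, hence $n-2k\ge 2(\sqrt2-1)n$, $\tfrac{n}{n-2k}\le\tfrac{\sqrt2+1}{2}$, $\tfrac{2k}{n-2k}\le\tfrac{\sqrt2-1}{2}$, and $\sqrt k\le x/\sqrt2+1$.

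The three probabilistic ingredients are elementary. \emph{(i) Counting event.} $\#\{i:X_i>b\}\sim\Bin(n,p)$ with $p=\Pr{X_1>b}\le\sigma^2/\tau^2$ by Chebyshev, and similarly on the left; choosing $\tau$ a suitable constant multiple of $\sigma\sqrt{n/k}$ — concretely so large that $np\le(k+1)/e^2$, i.e.\ $\tau\ge e\sigma\sqrt{n/(k+1)}$ — the crude estimate $\Pr{\Bin(n,p)\ge k+1}\le\bigl(enp/(k+1)\bigr)^{k+1}\le e^{-(k+1)}\le e^{-x^2/2}$ yields $\Pr{\mathcal E^c}\le2e^{-x^2/2}$. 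This pins down $\tau\asymp\sigma\sqrt n/x$ and, crucially, the range $b-a=2\tau\asymp\sigma\sqrt n/x$. \emph{(ii) Fluctuation.} Truncation is $1$-Lipschitz, so $\Var{Y_1}\le\Var{X_1}=\sigma^2$, while $Y_1$ has range $b-a$; Bernstein's inequality, after solving the resulting quadratic in $t$ with $\sqrt{A+B}\le\sqrt A+\sqrt B$, gives $\bigl|\overline{Y}_n-\mu_{a,b}\bigr|\le \sigma x/\sqrt n+x^2(b-a)/(3n)$ outside an event of probability $\le2e^{-x^2/2}$, and the second term equals $2x^2\tau/(3n)\asymp\sigma x/\sqrt n$. \emph{(iii) Bias.} Because $a\le\mu\le b$, $\mu_{a,b}-\mu=\Ex{(a-X_1)_+}-\Ex{(X_1-b)_+}$, and Cauchy--Schwarz together with $(X_1-b)^2\le(X_1-\mu)^2$ on $\{X_1>b\}$ gives $\Ex{(X_1-b)_+}\le\sigma\sqrt{\Pr{X_1>b}}\le\sigma^2/\tau$, so $|\mu_{a,b}-\mu|\le\sigma^2/\tau\asymp\sigma x/\sqrt n$ (for the right tail the one-sided bound $\mu_{a,b}-\mu\le\Ex{(a-X_1)_+}$ already suffices). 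Intersecting all the good events — a union bound gives failure probability $\le2e^{-x^2/2}+2e^{-x^2/2}=4e^{-x^2/2}$ — and collecting the three $O(\sigma x/\sqrt n)$ contributions multiplied by the prefactors above yields the claimed tail bound.

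The work is entirely in the constants, and this is where the main obstacle lies: the single parameter $\tau$ must be large enough for the Chernoff cushion to push $\Pr{\mathcal E^c}$ below $2e^{-x^2/2}$, yet small enough that neither the Bernstein correction $x^2(b-a)/(3n)$ nor the replacement term $\tfrac{2k}{n-2k}\tau$ overshoots; since all three quantities scale like $\sigma x/\sqrt n$, extracting the exact coefficients $3\sqrt2+8$ and $4+4\sqrt2$ requires tracking the $O(1)$ slack in Chebyshev, Cauchy--Schwarz, the binomial tail, the inversion of Bernstein, and the ceiling $k=\lceil x^2/2\rceil$. In particular the \emph{additive} constant $3\sqrt2+8$ (rather than a purely multiplicative bound) arises from the floor $k\ge1$, through estimates such as $\sqrt k\le x/\sqrt2+1$. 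One last remark that simplifies the bookkeeping: for $x^2\le 2\log 4$ the asserted inequality is vacuous, since then $\Pr{\cdot}\le 1\le 4e^{-x^2/2}$, so one may freely assume $x^2\ge2$ (hence $\sqrt k\le x$) in the non-trivial range.
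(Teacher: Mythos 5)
Your argument is correct but follows a genuinely different route from the paper's. The paper never truncates at deterministic levels: via the quantile transform it conditions on the order statistics $U_{(k)},U_{(n-k+1)}$ of underlying uniforms, so that the kept points become i.i.d.\ from a \emph{random} compactly supported law $P^{(a,b)}$, and then applies Bernstein conditionally, controlling the random bias $\mu-\mu^{(a,b)}$, the conditional variance and the width $\Delta_{n,k}$ on a good event (this is Theorem \ref{thm:generalfinitevariance}). Your deterministic truncation $Y_i=\max\{a,\min\{b,X_i\}\}$ plus the counting event replaces all of that conditioning machinery by two binomial tail estimates, and your sandwich on $\mathcal E$ is valid (for the upper half only $\#\{i:X_i>b\}\le k$ is even needed, since then $X_{(i)}\le Y_{(i)}$ for all kept indices). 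What the paper's heavier setup buys is reuse: the same conditional representation drives the sharper constants of Theorems \ref{thm:sharpersubgaussian}--\ref{thm:multiplesubgaussian}, the ratio-type Gaussian approximation, and the contamination bound, none of which are reachable from a fixed truncation level; what your route buys is an elementary, self-contained proof of this one statement. The one step you have not executed is the constant chase, and since your proof is structurally different there is no a priori reason your constants land below $3\sqrt2+8$ and $4+4\sqrt2$ --- they do, but only just, so this step is not dispensable. Concretely, with $\tau=e\sigma\sqrt{n/(k+1)}$ (forced by your Chernoff requirement $np\le (k+1)/e^2$) and the splits $\sqrt k\le x/\sqrt2+1$, $\sqrt{k+1}\le x/\sqrt2+\sqrt2$, $x^2/\sqrt{k+1}\le\sqrt2\,x$, the four contributions are at most $\bigl(1+\tfrac1{\sqrt2}\bigr)e\,x+(\sqrt2+1)e$ (replacement term), $\tfrac{\sqrt2+1}{2}\,x$ (Bernstein main term), $\tfrac{(\sqrt2+1)\sqrt2\,e}{3}\,x$ (Bernstein correction in your $x^2(b-a)/(3n)$ form), and $\tfrac{\sqrt2+1}{2e}\bigl(\tfrac{x}{\sqrt2}+\sqrt2\bigr)$ (bias), all times $\sigma/\sqrt n$; numerically this totals about $9.26\,x+7.19$ against the allowed $9.66\,x+12.24$. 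Be warned that the cruder bound $\sqrt k\le x$ (which your ``vacuous for small $x$'' remark invites you to use everywhere) makes the replacement term alone $(\sqrt2+1)e\,x\approx 6.56\,x$ and pushes the total to about $11.4\,x$, which exceeds $(4+4\sqrt2)x$ and is rescued by the additive constant only for $x\lesssim 7$; so the additive/multiplicative split via $\sqrt k\le x/\sqrt2+1$ is essential, not cosmetic, and you should write out this arithmetic to finish.
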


In other words, the trimmed mean with the appropriate trimming parameter $k=k(x)$ achieves (\ref{eq:tailformulation}) with $C=4$ and $c_2 = 4\sqrt{2}+4$ and $c_1=3\sqrt{2}+8$. The next result shows we can reduce $c_1$ and improve $c_2$ to (nearly) optimal values under certain assumptions. This is the content of the next result. 

\begin{theorem}[Proof in \S \ref{subsub:proof:sharpersubgaussian}]\label{thm:sharpersubgaussian}Under the same assumptions as Theorem \ref{thm:allsubgaussian}, and given $a\in (0,1)$, we obtain the bound 
\[\Pr{|\overline{X}_{n,k(x)}-\mu|>\frac{\sigma\,(a\sqrt{2} + (1+a)\,x)}{\sqrt{n}}}\leq 4\,\exp\left(-\frac{x^2}{2}\right),\]
corresponding to $c_1=\sqrt{2}a$, $c_2=1+a$ in (\ref{eq:subgaussianguarantee}), 
under either one of the following additional assumptions:
\begin{enumerate}
\item $(1+x)/\sqrt{n}\leq \eta_F(a)$, where $\eta_F(a)$ depends only on $a$ and the common cumulative distribution function $F$ of the random variables $X_1,\dots,X_n$;
\item $\Ex{|X_1-\mu|^p}\leq (\kappa_{2,p}\sigma)^p$ for some $p>2$ and $\kappa_{2,p}<+\infty$, and additionally 
\begin{equation}\label{eq:conditiononasharper}a\geq 216\sqrt{2}\,\frac{(1+x)^2}{n} + 24\,{\kappa_{2,p}}\,\left(\frac{1+x}{\sqrt{n}}\right)^{\frac{p-2}{2p-2}}.\end{equation}
\end{enumerate}
\end{theorem}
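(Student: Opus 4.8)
The plan is to reuse the ``truncation sandwich'' that underlies the proof of Theorem~\ref{thm:allsubgaussian} and to extract the sharper constants by choosing the truncation window optimally and tracking every lower-order term. We may assume $\mu=0$; write $k=k(x)=\lceil x^2/2\rceil$, and for $\alpha<0<\beta$ let $\psi_{\alpha,\beta}(t):=(t\vee\alpha)\wedge\beta$ denote the clipping of $t$ to $[\alpha,\beta]$. First I would recall the deterministic sandwich: on an event of probability at least $1-2e^{-x^2/2}$, controlled by a binomial tail bound for the order statistics $X_{(k+1)}$ and $X_{(n-k)}$, and with $\alpha,\beta$ chosen as population quantiles at levels of order $k/n\asymp(1+x)^2/n$, the trimmed mean $\overline{X}_{n,k}$ differs from $\frac1{n-2k}\sum_{i=1}^n\psi_{\alpha,\beta}(X_i)$ by at most a ``slack'' of order $\frac{k(\beta-\alpha)}{n-2k}$. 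Then I would decompose (using $\mu=0$)
\[
\frac1{n-2k}\sum_{i=1}^n\psi_{\alpha,\beta}(X_i) \;=\; \mathbb{E}\,\psi_{\alpha,\beta}(X_1)\;+\;\frac{2k}{n-2k}\,\mathbb{E}\,\psi_{\alpha,\beta}(X_1)\;+\;\frac{n}{n-2k}\cdot\frac1n\sum_{i=1}^n\bigl(\psi_{\alpha,\beta}(X_i)-\mathbb{E}\,\psi_{\alpha,\beta}(X_1)\bigr),
\]
and bound the last, centered, term by Bernstein's inequality at the level $2e^{-x^2/2}$, so that together with the sandwich event the total failure probability is $\le 4e^{-x^2/2}$, matching the statement.

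Two elementary facts make the leading constant nearly optimal. First, clipping is $1$-Lipschitz, so the clipped variance $v:=\mathrm{Var}\bigl(\psi_{\alpha,\beta}(X_1)\bigr)$ satisfies $v\le\sigma^2$ (via $\mathrm{Var}(f(X))=\tfrac12\mathbb{E}(f(X)-f(X'))^2\le\tfrac12\mathbb{E}(X-X')^2$ for $1$-Lipschitz $f$); hence the ``Gaussian part'' of Bernstein's bound contributes at most $\frac{n}{n-2k}\,x\sqrt{v/n}\le\bigl(1+\tfrac{2k}{n-2k}\bigr)\frac{x\sigma}{\sqrt n}$, which is at most $(1+a)\frac{x\sigma}{\sqrt n}$ as soon as $a$ exceeds a quantity of order $k/n\asymp(1+x)^2/n$ -- this is exactly the role of the first term $216\sqrt2\,(1+x)^2/n$ in~(\ref{eq:conditiononasharper}) (in hypothesis~(1) it is absorbed into $\eta_F(a)$). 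Second, the Bernstein ``linear'' correction is at most $\frac{n}{n-2k}\cdot\frac{x^2(\beta-\alpha)}{3n}$. It then remains to bound the residual contributions -- the clipping bias $\lvert\mathbb{E}\,\psi_{\alpha,\beta}(X_1)\rvert$, the products $\frac{k}{n}(\beta-\alpha)$ arising from the sandwich slack and from the $\frac{1}{n-2k}$-versus-$\frac1n$ normalisation, and the Bernstein correction $\frac{x^2(\beta-\alpha)}{3n}$ -- by the remaining budget of order $a\sigma(\sqrt2+x)/\sqrt n$ coming from $c_1=\sqrt2 a$, $c_2=1+a$ in~(\ref{eq:tailformulation}).

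The two additional hypotheses exist precisely to close this last estimate. Under hypothesis~(1), I would take $\alpha,\beta$ to be the exact quantiles matching the trimming level; as $(1+x)/\sqrt n\to 0$ these quantiles tend to the edges of $\mathrm{supp}(F)$, so by dominated convergence $v\to\sigma^2$, $\mathbb{E}\,\psi_{\alpha,\beta}(X_1)\to 0$, and the window-width terms vanish after rescaling, and one then \emph{defines} $\eta_F(a)$ as the largest threshold on $(1+x)/\sqrt n$ below which all residuals fit the $a$-budget; this is a soft argument with no explicit rate, and $\eta_F$ genuinely depends on the local behaviour of $F$ at its tail quantiles. Under hypothesis~(2) everything becomes quantitative: Markov's inequality applied to $\mathbb{E}\lvert X_1\rvert^p\le(\kappa_{2,p}\sigma)^p$ bounds the distance from $0$ to those quantiles -- hence the window width $\beta-\alpha$ -- by $O\bigl(\kappa_{2,p}\sigma\,(n/(1+x)^2)^{1/p}\bigr)$, while Hölder's inequality (interpolating the second and $p$-th moments over the clipped tail) bounds the clipping bias by $\kappa_{2,p}\sigma$ times a matching power of $(1+x)^2/n$. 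Substituting these into the budget inequality -- and, if it helps, re-optimising the clipping level rather than using the minimal admissible one -- produces a sufficient condition on $a$ of exactly the form~(\ref{eq:conditiononasharper}), the exponent $(p-2)/(2p-2)$ emerging from the trade-off between shrinking the clipping bias and enlarging the window.

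I expect the bookkeeping under hypothesis~(2) to be the main obstacle. Each individual estimate -- binomial tails, Bernstein, Markov, Hölder, the Lipschitz contraction of clipping -- is standard, but the clipping bias, the normalisation discrepancy, the sandwich slack, and the Bernstein correction are all of comparable order and must \emph{simultaneously} be forced into a single $a$-sized budget with explicit, moderately small constants; obtaining the clean exponent $(p-2)/(2p-2)$ in~(\ref{eq:conditiononasharper}) rather than a messier expression is what forces one to choose the truncation window with care rather than at the first admissible level.
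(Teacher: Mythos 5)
Your plan is correct in outline, but it follows a genuinely different route from the paper. The paper never compares the trimmed mean to a deterministically clipped sum: it conditions on the extreme order statistics (via the quantile coupling $X_{(i)}=F^{-1}(U_{(i)})$ of Proposition \ref{prop:richer}), observes that the retained points are then i.i.d.\ from the trimmed law $P^{(a,b)}$ (Corollary \ref{cor:conditionalsample}), applies Bernstein conditionally, and controls the conditional bias $|\mu-\mu^{(a,b)}|$, the conditional variance, and the \emph{random} width $\Delta_{n,k}$ on a good event; Theorem \ref{thm:sharpersubgaussian} is then read off from the master Theorem \ref{thm:generalfinitevariance} by choosing the parameter $v$. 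Your sandwich between clipped averages at \emph{population} quantiles (the Lugosi--Mendelson-style argument) replaces the random width by a deterministic one bounded via Markov on the $p$-th moment, which naturally yields a condition on $a$ with exponent $(p-2)/p$ in $(1+x)/\sqrt{n}$ rather than the paper's $(p-2)/(2p-2)$; since the base is below $1$ under the standing assumption on $x$, your (smaller) requirement is implied by (\ref{eq:conditiononasharper}), so the stated theorem follows a fortiori --- the paper's weaker exponent is an artifact of needing $\Pr{\Delta_{n,k}>12v\sigma\sqrt{n/k}}\leq e^{-k}$ in Corollary \ref{cor:Delta}, a mechanism absent from your argument, so do not expect the trade-off you describe to reproduce $(p-2)/(2p-2)$ exactly. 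What the paper's conditioning buys in exchange is reuse: the same decomposition drives the ratio-type Gaussian approximation of Theorem \ref{thm:generalpreciseGaussian}, which a sandwich bound cannot deliver. Two points deserve care in your write-up. First, the sandwich slack is really $\tfrac{2k}{n-2k}\max\{|\alpha|,|\beta|\}$, and under a bare second-moment bound Chebyshev only gives $\max\{|\alpha|,|\beta|\}\lesssim \sigma\sqrt{n/k}$, making the slack \emph{comparable to} the entire $a$-budget $a\sigma\sqrt{2k/n}$; the extra hypotheses are thus not a refinement but the whole point, and under hypothesis (1) the statement you need is precisely that $\sqrt{\xi}\,Q_{|X-\mu|}(1-\xi)\to 0$ as $\xi\to0$ (uniform integrability of $X^2$), which is what the paper packages as $\rho_{F,2}(\xi)\to0$. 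Second, the explicit constants $216\sqrt{2}$ and $24$ remain to be verified, but your budget is generous enough that this is plausible bookkeeping rather than a gap.
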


Theorem \ref{thm:sharpersubgaussian} is closely related to recent work by Minsker \cite{minsker23a, MinskerUstat}. In our notation, these papers give conditions under which variants of the so-called ``median of means''~estimator achieves nearly optimal constants $c_2 = 1+o(1)$. For instance, Theorem 1 in \cite{minsker23a} achieves this under a variant of assumption 2 above. The main differences with our result are twofold. Firstly, the trimmed mean is easier to compute \cite[Remark 2, item (c)]{minsker23a}. Secondly, we obtain explicit finite-sample bounds on the relationship between $a$ and the quantities $\kappa_{2,p}$, whereas Minsker obtains asymptotic conditions for $a=o(1)$ \cite[Remark 2, item (b)]{minsker23a}. The constants in Theorem \ref{thm:sharpersubgaussian} are quite large to be practically meaningful; nevertheless, the trimmed mean with $k=k(x)$ has the fallback guarantee from Theorem \ref{thm:allsubgaussian} irrespective of any additional assumptions. 

We now investigate whether a choice of $k$ independent of $x$ is possible. This would be desirable in practice since the same trimming parameter would work for a range of $x$, or equivalently, for a range of confidence levels. 

In general, any estimator achieving sub-Gaussian bounds as in (\ref{eq:tailformulation}) must depend somehow on the desired confidence level (cf. \cite[Theorem 3.2, part 2]{dllo2016}). Therefore, we will need to make stronger assumptions to obtain a ``$x$-independent''~trimming parameter. 

The next result shows that weak higher-moment assumptions suffice for this purpose. In particular, we obtain ``multiple-$\delta$''~estimator in the language of \cite{dllo2016}, with optimal value $c_2=1$.    

\begin{theorem}[Proof in \S \ref{subsub:proof:multiplesubgaussian}]\label{thm:multiplesubgaussian} Let $X_1,\dots,X_n$ be i.i.d.\ random variables with well-defined mean $\mu$, finite variance $\sigma^2$, and such that
\[\exists p>2,\kappa_{2,p}\geq 1\,:\, \Ex{|X_1-\mu|^p}\leq (\kappa_{2,p}\sigma)^p.\]
Assume that $k_*\in\N$ satisfies
\[432\,\frac{k^{3/2}_*}{n} + 24\sqrt{2}\,\kappa_{2,p}\,\frac{k^{\frac{3p-4}{4p-4}}_*}{n^{\frac{p-2}{4p-4}}}\leq 1.\]
Then for any $0<x\leq \sqrt{2k_*}$,
\[\Pr{|\overline{X}_{n,k_*}-\mu|>\frac{(1+x)\,\sigma}{\sqrt{n}}}\leq 4\,\exp\left(-\frac{x^2}{2}\right).\]\end{theorem}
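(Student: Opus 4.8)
The plan is to follow the template of the proofs of Theorems~\ref{thm:allsubgaussian}--\ref{thm:sharpersubgaussian}, but now with the trimming parameter fixed to $k_*$ throughout (equivalently, re-running the proof of Theorem~\ref{thm:sharpersubgaussian} with the trimming parameter carried as a free quantity $k$ and only specialized to $k=k_*$ at the end). Throughout I would assume without loss of generality that $\mu=0$, since the trimmed mean of the centered sample equals $\overline{X}_{n,k_*}-\mu$. The point of the hypothesis $0<x\le\sqrt{2k_*}$ is precisely that it forces $k_*\ge\lceil x^2/2\rceil=k(x)$: so $\overline{X}_{n,k_*}$ discards at least as many sample points on each side as the estimator in the earlier theorems, and ``over-trimming'' is the only genuinely new feature. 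Its only effect is an extra clipping bias, which the higher-moment hypothesis will absorb; the localization of the empirical quantiles $X_{(k_*)}$, $X_{(n-k_*+1)}$ that drives the earlier proofs is, if anything, easier. (Note also that the hypothesis $\kappa_{2,p}\ge 1$ is automatic, by the power-mean inequality applied to $|X_1-\mu|$.)

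Concretely, fix $x$ and write $\delta=e^{-x^2/2}$. I would: (i) use Chernoff bounds for $\Bin(n,\cdot)$ -- equivalently, concentration of order statistics -- to show that, off an event of probability at most $2\delta$, both $X_{(k_*)}$ and $X_{(n-k_*+1)}$ lie between population quantiles of $F$ at levels of order $k_*/n$; here $x^2\le 2k_*$ is exactly what makes the binomial Chernoff exponents, which scale like $k_*$, dominate $x^2/2$, so these levels stay bounded away from $0$ and from $1$. (ii) On that event, sandwich $\overline{X}_{n,k_*}$ between the empirical means $n^{-1}\sum_i\phi_\pm(X_i)$ of two deterministic clipping (Huber) maps $\phi_\pm$ clipping at those population quantiles, up to explicit deterministic corrections arising from the $\tfrac{1}{n-2k_*}$ normalization and from the gap between trimming and Winsorizing (these are of order $\tfrac{k_*}{n}$ times quantile-scale quantities). (iii) Bound the deterministic clipping bias $|\E\phi_\pm(X_1)|$ by a tail integral controlled through $\E|X_1-\mu|^p\le(\kappa_{2,p}\sigma)^p$, using that the quantiles at level $\asymp k_*/n$ have magnitude $\asymp\kappa_{2,p}\sigma(n/k_*)^{1/p}$. (iv) Apply Bernstein's inequality to $\phi_\pm(X_i)-\E\phi_\pm(X_1)$, whose variance is at most $\sigma^2$ and whose range $W$ is at most the width of the clipping interval, so that off an event of probability at most $2\delta$ the centered empirical mean deviates by at most $\sigma x/\sqrt n$ plus a range-correction term of order $W x^2/n$. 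A union bound over the at most four bad events gives total probability at most $4\delta$.

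On the resulting good event one then has
\[
|\overline{X}_{n,k_*}-\mu|\ \le\ \frac{\sigma x}{\sqrt n}\ +\ (\text{clipping bias})\ +\ (\text{Bernstein range correction})\ +\ (\text{normalization/Winsorization corrections}),
\]
and the crux is to show that the last three terms together are at most $\sigma/\sqrt n$, so that the whole is at most $\sigma(1+x)/\sqrt n$. After substituting $x^2\le 2k_*$ to eliminate all $x$-dependence, each of those terms is $\sigma/\sqrt n$ times a constant multiple of either $k_*^{3/2}/n$ or $\kappa_{2,p}\,k_*^{(3p-4)/(4p-4)}/n^{(p-2)/(4p-4)}$ -- the latter exponent appearing because $\tfrac12+\tfrac{p-2}{4p-4}=\tfrac{3p-4}{4p-4}$, combining a ``budget'' factor $k_*^{1/2}$ with the quantile-width factor $(k_*/n)^{(p-2)/(4p-4)}$ (which is the term $\kappa_{2,p}((1+x)/\sqrt n)^{(p-2)/(2p-2)}$ of Theorem~\ref{thm:sharpersubgaussian} evaluated with $1+x\asymp\sqrt{k_*}$). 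Requiring the sum of these two multiples to be at most $1$ is precisely the displayed hypothesis on $k_*$. I expect this last accounting -- pinning down the two exponents and, especially, the absolute constants $432$ and $24\sqrt 2$ -- to be the fussiest part; conceptually, though, it is simply the translation, through the worst case $x=\sqrt{2k_*}$, of the requirement that the deterministic slack $\sigma(a\sqrt 2+(1+a)x)/\sqrt n$ of Theorem~\ref{thm:sharpersubgaussian} not exceed $\sigma(1+x)/\sqrt n$ -- equivalently $a(\sqrt 2+x)\le 1$ for every admissible $x$ -- into a condition on $k_*$ alone. And Theorem~\ref{thm:allsubgaussian} remains as an unconditional fallback whenever the hypothesis on $k_*$ fails.
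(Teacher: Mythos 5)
Your outline is sound, but it is a genuinely different route from the paper's. The paper proves this theorem in three lines by invoking its master Theorem~\ref{thm:generalfinitevariance} with $k=k_*$: since $x\le\sqrt{2k_*}$ gives $\lceil x^2/2\rceil\le k_*$ and $2e^{-k_*}\le 2e^{-x^2/2}$, it only remains to pick $v=v_*$ as in Proposition~\ref{prop:valueofv_*} and check that $h(\xi_*,v_*)\sqrt{2k_*}\le 1$, which is verbatim the displayed hypothesis on $k_*$ (the constants $432=216\sqrt2\cdot\sqrt2$ and $24\sqrt2=4\cdot 6\kappa_{2,p}$-coefficient times $\sqrt2$ come straight from the bound $h(\xi_*,v_*)\le 216\sqrt2\,k_*/n+24\kappa_{2,p}(k_*/n)^{(p-2)/(4p-4)}$ already derived in the proof of Theorem~\ref{thm:sharpersubgaussian}). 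The mechanism behind that master theorem is the paper's central device: condition on $(U_{(k_*)},U_{(n-k_*+1)})$ so that the trimmed sample is \emph{conditionally} i.i.d., apply Bernstein conditionally, and control the conditional mean, variance and width on a good event. You instead avoid conditioning altogether: you localize the two extreme order statistics near deterministic population quantiles and compare $\overline{X}_{n,k_*}$ to a Winsorized mean at those fixed thresholds, then apply Bernstein to the genuinely i.i.d.\ clipped variables --- closer in spirit to the Lugosi--Mendelson truncation analysis than to this paper. Your route buys two things: the variance proxy is $\le\sigma$ for free (clipping is a contraction), so you never pay the $(1-\xi_*)^{-3/2}$ factor, and Markov's inequality at the population quantile gives a clipping width $\asymp\kappa_{2,p}\sigma(n/k_*)^{1/p}$, sharper than the paper's bound $\Delta_{n,k_*}\lesssim\kappa_{2,p}\sigma(n/k_*)^{p/(4p-4)}$ from Corollary~\ref{cor:Delta}. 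Consequently your moment error term scales (relative to $\sigma/\sqrt n$) as $\kappa_{2,p}k_*^{(p-1)/p}n^{(2-p)/(2p)}$, which is \emph{smaller} than the hypothesis term $\kappa_{2,p}k_*^{(3p-4)/(4p-4)}n^{-(p-2)/(4p-4)}$ by the factor $(k_*/n)^{(p-2)^2/(4p(p-1))}\le1$; so the stated hypothesis is amply sufficient on your route provided your absolute constants land below $432$ and $24\sqrt2$, which a rough accounting (Chernoff level constants of order $e^{-2}$, Bernstein correction $x^2W/12n$ with $x^2\le 2k_*$) suggests they do with room to spare. One small correction: you attribute the exponent $(3p-4)/(4p-4)$ to a ``quantile-width factor $(k_*/n)^{(p-2)/(4p-4)}$,'' but that factor is an artifact of the paper's union-bound estimate of the random width $\Delta_{n,k}$ (via $v_*$), not of the population quantile itself; on your own argument the natural exponent is the better $(p-1)/p$, and the theorem's condition is then sufficient rather than the condition your proof would naturally produce. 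What the paper's conditional route buys in exchange is reusability: the same master theorem drives Theorems~\ref{thm:allsubgaussian}--\ref{thm:multiplesubgaussian} and adapts to the asymmetric and contaminated settings, where comparison with a single deterministic truncation is less convenient.
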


As a consequence, if $n\to +\infty$ with $\kappa_{2,p}:=\kappa^{(n)}$ possibly varying with $n$, and choosing 
\[k^{(n)}_* = o\left(\frac{n^\frac{p-2}{3p-4}}{(\kappa^{(n)})^\frac{4p-4}{p-2}}\right),\]
one obtains
\[\Pr{|\overline{X}_{n,k^{(n)}_*}-\mu|>\frac{(1+\sqrt{2\log(4/\alpha)})\,\sigma}{\sqrt{n}}}\geq 1-\alpha\mbox{ for }0<\log(4/\alpha)\leq k_*.\]
This result can be compared with \cite[Theorem 3.2]{dllo2016}, which only covers the case of finite kurtosis (i.e., $p=4$), but allows for a wider range of $\alpha$ which is roughly
$\log(4/\alpha)\ll o((n/\kappa^{(n)})^{2/3})$ (whereas Theorem \ref{thm:multiplesubgaussian} requires $\log(4/\alpha)\ll o(n^{1/4}/(\kappa^{(n)})^{4})$).

\subsection{Precise Gaussian approximation and confidence intervals}\label{sub:intro:CLT} So far, our results have presented various sub-Gaussian concentration bounds for the trimmed mean. While theoretically interesting, it is known that such concentration bounds are often pessimistic. 

To mitigate this limitation, some papers have tried to show that certain sub-Gaussian estimators are asymptotically efficient, which indicates that their practical performance may be closer to ideal. For instance, \cite{minsker2021robust} establishes the asymptotic statistical efficiency of certain robust mean estimator. It is not hard to show a similar result for the trimmed mean when $k$ is fixed; see \S \ref{sub:asymptoticefficiency} in the Appendix for details. 

In what follows, we show that the trimmed mean satisfies a type of Gaussian approximation even very far into the tail of its distribution. This can be seen as a strengthening of results known for other estimators. 
\begin{theorem}[Proof in \S \ref{sub:proof:preciseconfidence}]\label{thm:preciseconfidence}There exists a constant $C>0$ such that the following holds. Let $X_1,\dots,X_n$ be i.i.d.\ random variables with mean $\mu\in\R$, variance $\sigma^2\in (0,+\infty)$ and such that $\Ex{|X_1-\mu|^p}\leq (\kappa_{2,p}\sigma)^p$ for some $p>2$ and $\kappa_{2,p}<+\infty$. Given $x>0$, $\delta>0$, and a trimming parameter \[k_*\geq \max\left\{2,\left\lceil \log\left(\frac{4}{\delta\,(1-\Phi(x))}\right)\right\rceil\right\}\] 
satisfying
\[\gamma:=\kappa_{2,p}\frac{k_*^{2-\frac{p}{4p-4}}}{n^{\frac{p-2}{4p-4}}}<\frac{1}{C},\]
we have that 

\begin{eqnarray*}\left|\frac{\Pr{|\overline{X}_{n,k_*}-\mu|>\frac{x\sigma}{\sqrt{n}}}}{1-\Phi(x)} - 1\right| &\leq & C\gamma+\delta;\\  \left|\frac{\Pr{|\overline{X}_{n,k_*}-\mu|>\frac{x\hat{\sigma}_{n,k_*}}{\sqrt{n}}}}{1-\Phi(x)} - 1\right| &\leq & C\gamma+\delta; \\ \frac{\Pr{\hat{\sigma}_{n,k_*}>(1+\gamma)\sigma}}{1-\Phi(x)} &\leq & \delta,\end{eqnarray*}
where 
\[\hat{\sigma}_{n,k_*}^2:=\frac{1}{n-2k_*}\sum_{i=k_1+1}^{n-k_2}\,(X_{(i)} - \overline{X}_{n,k_*})^2\]
is the empirical variance of the trimmed sample.\end{theorem}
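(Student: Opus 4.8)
The plan is to reduce all three claims to classical moderate‑deviation facts for sums of \emph{bounded} i.i.d.\ variables, via a quantile‑truncation sandwich that replaces the data‑dependent trimming thresholds by deterministic population quantiles. Fix levels $q_\pm\asymp k_*/n$ with $q_-<k_*/n<q_+$, set $a_\pm:=F^{-1}(q_\pm)$, $b_\pm:=F^{-1}(1-q_\pm)$, and for $a\le b$ write $\psi_{a,b}(z):=\max\{a,\min\{b,z\}\}$ for Winsorization. Starting from the identity $\overline{X}_{n,k_*}=\frac{1}{n-2k_*}\big(\sum_{i=1}^n\psi_{X_{(k_*)},X_{(n-k_*+1)}}(X_i)-k_*(X_{(k_*)}+X_{(n-k_*+1)})\big)$ and using that $\psi_{a,b}$ is monotone in $a$ and in $b$, one obtains \emph{deterministic} affine functions $S_-\le\overline{X}_{n,k_*}\le S_+$ of the truncated variables $Y_i:=\psi_{a_\pm,b_\pm}(X_i)$, valid on the event that $X_{(k_*)},X_{(n-k_*+1)}$ fall between the chosen population quantiles. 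The complementary bad event has probability $e^{-\Theta(k_*)}$ by a Chernoff bound on binomial counts, hence at most $\tfrac14\delta(1-\Phi(x))$ thanks to the hypothesis $k_*\ge\lceil\log(4/(\delta(1-\Phi(x))))\rceil$ — this is exactly what that hypothesis buys. The same device sandwiches $\hat\sigma_{n,k_*}^2$ between Winsorized second moments and sandwiches the studentized statistic $\sqrt n(\overline X_{n,k_*}-\mu)/\hat\sigma_{n,k_*}$ between studentized Winsorized sums. We are thus reduced to tails of $n^{-1}\sum_i(Y_i-\E Y_i)$ and of its self‑normalized analogue, where $|Y_i-\E Y_i|\le M$ with $M\asymp\kappa_{2,p}\sigma(n/k_*)^{1/p}$ and $\mathrm{Var}(Y_i)=\tau^2\le\sigma^2$.

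For the first bound we invoke a Cramér‑type estimate for i.i.d.\ bounded summands: for $0\le x\lesssim(\tau\sqrt n/M)^{1/3}$, $\big|\Pr{\sum_i(Y_i-\E Y_i)>x\tau\sqrt n}/(1-\Phi(x))-1\big|\lesssim(1+x^3)M/(\tau\sqrt n)$, and similarly for the left tail; adding the two tails and comparing with the Gaussian tail gives the first displayed inequality. For the second bound we apply the self‑normalized Cramér‑type moderate‑deviation theorem (Shao; Jing--Shao--Wang) to the studentized Winsorized sum, which yields the same relative error under the same assumptions \emph{without} needing any separate handle on $\hat\sigma_{n,k_*}$. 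In both cases one must check that (i) the range $x\le\sqrt{k_*}$ implied by the hypotheses lies inside the admissible window $x\lesssim(\tau\sqrt n/M)^{1/3}$, and (ii) the relative error $(1+x^3)M/(\tau\sqrt n)$, the truncation bias $|\E Y_i-\mu|\lesssim\kappa_{2,p}\sigma(k_*/n)^{1-1/p}$, the variance mismatch $|\tau^2-\sigma^2|\lesssim\kappa_{2,p}^2\sigma^2(k_*/n)^{1-2/p}$, and the $(n-2k_*)$‑versus‑$n$ rescaling each contribute $O(\gamma)$ to the ratio. The subtle point is that perturbing the argument of $1-\Phi$ from $x$ to $x(1+\varepsilon)$ changes it by a relative $O(x^2\varepsilon)$, i.e.\ costs a factor $\asymp k_*$ since $x^2\lesssim k_*$; the exponent $2-\tfrac{p}{4p-4}$ on $k_*$ (and the matching exponent on $n$) in the definition of $\gamma$ is calibrated so that each such $\varepsilon$ is $O(\gamma/k_*)$. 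Both (i) and (ii) then follow from the standing assumption $\gamma<1/C$ with $C$ chosen large enough.

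For the third bound, after the sandwich $\hat\sigma_{n,k_*}^2$ is dominated by an average of $n$ i.i.d.\ terms in $[0,M^2]$ whose mean is at most $\sigma^2$ (Winsorization does not increase the variance, and recentering at the trimmed mean only decreases it, modulo the bias already controlled). Bernstein's inequality then bounds $\Pr{\hat\sigma_{n,k_*}^2>(1+\gamma)^2\sigma^2}$ by $\exp(-c\,n\gamma^2\sigma^2/M^2)=\exp(-c\,\gamma^2n^{1-2/p}k_*^{2/p}/\kappa_{2,p}^2)$, and a short computation from $\gamma<1/C$ together with $k_*\ge\log(4/(\delta(1-\Phi(x))))$ shows this is at most $\delta(1-\Phi(x))$, as needed.

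The main obstacle is the arithmetic in the second step: one must bound a skewness/third‑moment term, two truncation‑bias terms, and the normalization corrections, each against $\gamma$ after an $x^2$‑fold amplification, using only the single inequality $\gamma<1/C$. This is where the precise exponents defining $\gamma$ are forced, and making every exponent line up simultaneously for all $p>2$ is the delicate part; by contrast, the truncation sandwich and the Bernstein estimate are routine, and the ordinary and self‑normalized moderate‑deviation inputs are off the shelf.
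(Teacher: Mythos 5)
Your route is genuinely different from the paper's. The paper never replaces the random trimming thresholds by deterministic quantiles: it conditions on $U_{(k_*)},U_{(n-k_*+1)}$ via the quantile coupling of Proposition \ref{prop:richer}, so that the retained points are \emph{conditionally} i.i.d.\ from $P^{(a,b)}$ (Corollary \ref{cor:conditionalsample}); it then applies the Jing--Shao--Wang theorem conditionally, defines a good event controlling $\Xi$, $\Delta_{n,k}$ and the concentration of the conditional self-normalizer $V_{n,k}$, and transfers from the random centering $\mu^{(a,b)}$ and normalization $V_{n,k}$ back to $\mu$ and $\sigma$ via the Gaussian-tail perturbation lemma (Proposition \ref{prop:tailgaussian}), with $\Pr{{\sf Good}^c}\leq 4e^{-k_*}\leq\delta(1-\Phi(x))$. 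Your Winsorization sandwich with deterministic truncation levels $F^{-1}(q_\pm)$, $q_\pm\asymp k_*/n$, avoids the conditioning entirely and reduces to \emph{unconditional} moderate deviations for bounded i.i.d.\ sums; the price is a pair of deterministic bias terms (the gap between the two Winsorized means and the $k_*(a_\mp+b_\mp)$ correction), which are of the same order $\kappa_{2,p}\sigma(k_*/n)^{1-1/p}$ as the paper's $|\mu-\mu^{(a,b)}|$, so the exponent bookkeeping against $\gamma$ comes out the same. Your truncation level even gives a slightly smaller envelope $M\asymp\kappa_{2,p}\sigma(n/k_*)^{1/p}$ than the paper's high-probability bound on $\Delta_{n,k}$. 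One bookkeeping point: with $q_\pm$ at distance a constant multiple of $k_*/n$ from $k_*/n$ you only get bad-event probability $e^{-ck_*}$ with $c<1$, which the hypothesis on $k_*$ does not make $\leq\delta(1-\Phi(x))$; you must take the multiplicative constants in $q_\pm\asymp k_*/n$ large enough (e.g.\ $q_-=k_*/(e^2n)$) to get a genuine $e^{-k_*}$.

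Two steps are overstated as written. First, for the second displayed inequality you cannot simply ``apply the self-normalized theorem to the studentized Winsorized sum \ldots without needing any separate handle on $\hat{\sigma}_{n,k_*}$'': the self-normalizer in that theorem is built from the same Winsorized variables $Y_i$ as the numerator, whereas the quantity in the statement is normalized by the trimmed-sample $\hat{\sigma}_{n,k_*}$ and divided by $\sqrt{n}$ rather than $\sqrt{n-2k_*}$. Sandwiching numerator and denominator separately does not produce a self-normalized statistic of a single i.i.d.\ sequence, so you still need concentration of the empirical variance around $\sigma$ plus a Gaussian-tail perturbation to swap normalizers --- exactly the paper's Step 4 and its good-event condition on $V_{n,k}$. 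You do have the required ingredient (the Bernstein bound you use for the third claim), so this is fixable, but it is a missing step, not a shortcut. Second, ``the same device sandwiches $\hat{\sigma}_{n,k_*}^2$ between Winsorized second moments'' is not a monotonicity statement: $\sum_i(\psi_{a,b}(X_i)-c)^2$ is not monotone in $(a,b)$, and one must instead bound the termwise difference (nonzero for only $O(k_*)$ indices, each of size $O(M)$ times $O(M)$), which contributes $O(k_*M^2/n)=O(\kappa_{2,p}^2\sigma^2(k_*/n)^{1-2/p})$ and is indeed $O(\gamma\sigma^2)$ under $\gamma<1/C$. With these two repairs the plan goes through.
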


It is instructive to compare this result with Berry-Ess\'{e}en-type inequalities for the sample mean, which give bounds such as 
\[\left|{\Pr{|\overline{X}_{n}-\mu|>\frac{x\sigma}{\sqrt{n}}}} - ({1-\Phi(x)})\right|\leq \frac{C\kappa^3_{2,3}}{\sqrt{n}}.\]
Our theorem is a stronger result than such an additive probability approximation as soon as $(1-\Phi(x))\ll n^{-1/2}$. In particular, it gives strong bounds even when the corresponding Gaussian probabilities are quite small. 

Our result also differs from self-normalized inequalities for the sample mean. For instance,  \cite{Jing2003} proves a result of the form: 
\begin{equation}\label{eq:jing}\left|\frac{\Pr{|\overline{X}_{n}-\mu|>\frac{x\hat{v}_{n}}{\sqrt{n}}}}{1-\Phi(x)} - 1\right|\ll 1\mbox{ with }\hat{v}^2_{n} := \frac{1}{n}\sum_{i=1}^n(X_i-\mu)^2,\mbox{ whenever }0\leq x \ll \frac{n^{\frac{p-2}{2p}}}{\kappa_{2,p}}.  \end{equation} 
This result works for the sample mean, and allows for a broader  range of $x$ than our Theorem \ref{thm:preciseconfidence}. However, the intuitive reason why (\ref{eq:jing}) works is the appearance of the self-normalized ratio $(\overline{X}_n-\mu)/\hat{v}_n$, whereby large values in the sample are compensated by a large value of $\hat{v}_n$. In particular, self-normalized Gaussian bounds do {\em not} imply good concentration for $(\overline{X}_n-\mu)/\sigma$ because the probability that $\hat{v}_n/\sigma$ is large may be nonnegligible when compared to $1-\Phi(x)$. This also implies that confidence intervals built via (\ref{eq:jing}) may be much wider than what one would expect from the Central Limit Theorem. 

By contrast, Theorem \ref{thm:preciseconfidence} guarantees that $(\overline{X}_{n,k} - \mu)/\sigma$ is well behaved, and that $\hat{\sigma}_{n,k}/\sigma$ is close to $1$. As a result, the confidence intervals one may obtain for $\overline{X}_{n,k}$ have essentially the length predicted by the CLT, even when the desired confidence level is very close to $1$. The following asymptotic result illustrates this point. 

\begin{corollary}[Proof omitted] Assume that $\{X_{i}\}_{i=1}^{+\infty}$ is an i.i.d.\ sequence of random variables with mean $\mu$, variance $\sigma^2\in (0,+\infty)$ and $\Ex{|X_1-\mu|^p}<+\infty$ for some $p>2$. Let $\{\alpha_n\}_{n\in\N}\subset (0,1/2)$ and $k_n\in \N$ satisfy 
\[k_n - \log(1/\alpha_n)\to +\infty\mbox{ and }\frac{k_n}{n^{\frac{p-2}{7p-8}}}\to 0\mbox{ as }n\to +\infty.\]
Define  $x_n\in\R_+$ via $(1-\Phi(x_n))=\alpha_n$. Then:
\[\Pr{|\overline{X}_{n,k_n}-\mu|\leq \frac{x_n\sigma}{\sqrt{n}}} = 1- (1+o(1))\,\alpha_n\]
and
\[\Pr{|\overline{X}_{n,k_n}-\mu|\leq \frac{x_n\hat{\sigma}_{n,k_n}}{\sqrt{n}} \mbox{ and }\hat{\sigma}_{n,k_n}\leq (1+o(1))\sigma} = 1- (1+o(1))\,\alpha_n,\]
where the $o(1)$ terms go to $0$ as $n\to +\infty$.\end{corollary}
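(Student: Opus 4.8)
The plan is to obtain the corollary as a direct specialization of Theorem~\ref{thm:preciseconfidence}, followed by elementary manipulations of probabilities; this is why the statement can reasonably be left unproved in the text. Since the sampling distribution is fixed, $\kappa_{2,p}:=(\Ex{|X_1-\mu|^p})^{1/p}/\sigma$ is a finite constant independent of $n$, and $x_n$ is well-defined and positive because $\alpha_n\in(0,1/2)$ gives $1-\Phi(x_n)=\alpha_n\in(0,1/2)$. I would invoke Theorem~\ref{thm:preciseconfidence} at $x=x_n$, with trimming parameter $k_*=k_n$, and with a slack parameter $\delta=\delta_n\to0$ to be chosen.

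The first step is to verify the two hypotheses of the theorem. For the lower bound on $k_*$, put $\beta_n:=k_n-\log(1/\alpha_n)\to+\infty$; this already forces $k_n\to+\infty$ (as $\log(1/\alpha_n)>\log 2>0$), hence $k_n\ge2$ eventually. Taking, for instance, $\delta_n:=4e\,e^{-\beta_n/2}$, one computes $\log\!\big(4/(\delta_n\alpha_n)\big)=k_n-1-\beta_n/2\le k_n-1$, so that $k_n\ge\lceil\log(4/(\delta_n(1-\Phi(x_n))))\rceil$, while $\delta_n\to0$. For the condition on $\gamma$, substituting $k_*=k_n$ and using the identity $2-\tfrac{p}{4p-4}=\tfrac{7p-8}{4p-4}$ gives
\[\gamma_n=\kappa_{2,p}\,\frac{k_n^{\,2-\frac{p}{4p-4}}}{n^{\frac{p-2}{4p-4}}}=\kappa_{2,p}\left(\frac{k_n}{n^{\frac{p-2}{7p-8}}}\right)^{\!\frac{7p-8}{4p-4}};\]
since $p>2$ makes the exponent $\tfrac{7p-8}{4p-4}$ positive, the hypothesis $k_n/n^{(p-2)/(7p-8)}\to0$ yields $\gamma_n\to0$, so $\gamma_n<1/C$ eventually and $\varepsilon_n:=C\gamma_n+\delta_n\to0$. (This is exactly where the exponent $7p-8$ in the hypothesis is used.)

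With these choices, Theorem~\ref{thm:preciseconfidence} gives, for all large $n$ and using $1-\Phi(x_n)=\alpha_n$: (i) $\Pr{|\overline{X}_{n,k_n}-\mu|>x_n\sigma/\sqrt{n}}=(1+\theta_n)\alpha_n$ with $|\theta_n|\le\varepsilon_n$; (ii) the analogous bound with $\hat{\sigma}_{n,k_n}$ replacing $\sigma$, again with relative error at most $\varepsilon_n$; and (iii) $\Pr{\hat{\sigma}_{n,k_n}>(1+\gamma_n)\sigma}\le\delta_n\alpha_n$. Part (i) immediately yields $\Pr{|\overline{X}_{n,k_n}-\mu|\le x_n\sigma/\sqrt{n}}=1-(1+o(1))\alpha_n$, the first displayed conclusion. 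For the second, let $A:=\{|\overline{X}_{n,k_n}-\mu|\le x_n\hat{\sigma}_{n,k_n}/\sqrt{n}\}$ and $B:=\{\hat{\sigma}_{n,k_n}\le(1+\gamma_n)\sigma\}$; part (ii) gives $\Pr{A}=1-(1+o(1))\alpha_n$ and part (iii) gives $\Pr{B^c}\le\delta_n\alpha_n=o(\alpha_n)$, so from $\Pr{A}\ge\Pr{A\cap B}\ge\Pr{A}-\Pr{B^c}$ we get $\Pr{A\cap B}=1-(1+o(1))\alpha_n$; since $\gamma_n\to0$, taking the ``$o(1)$'' in the statement to be $\gamma_n$ finishes the second conclusion.

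I do not expect a genuine obstacle here: the only delicate point is the joint choice of $\delta_n$, which must tend to $0$ (to make $\varepsilon_n\to0$) while staying large enough that the lower bound $k_n\ge\lceil\log(4/(\delta_n\alpha_n))\rceil$ holds --- feasible precisely because the margin $\beta_n=k_n-\log(1/\alpha_n)$ diverges and can absorb $\log(1/\delta_n)$. Everything else is the exponent bookkeeping that turns $k_n/n^{(p-2)/(7p-8)}\to0$ into $\gamma_n\to0$, together with the trivial complement and union-bound steps above, with all the substantive content residing in Theorem~\ref{thm:preciseconfidence}.
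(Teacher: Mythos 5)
Your proof is correct and is exactly the specialization of Theorem \ref{thm:preciseconfidence} that the omitted proof clearly intends: the choice $\delta_n=4e\,e^{-\beta_n/2}$ correctly exploits the diverging margin $\beta_n=k_n-\log(1/\alpha_n)$, and the identity $2-\tfrac{p}{4p-4}=\tfrac{7p-8}{4p-4}$ correctly converts the hypothesis $k_n/n^{(p-2)/(7p-8)}\to0$ into $\gamma_n\to0$. The final complement and union-bound manipulations are also sound, so no gaps remain.
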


\begin{remark} It would also be possible to show that $(\overline{X}_{n,k}-\mu)/\sigma$ converges weakly to a standard normal when $k_n\ll n^{(p-2)/(5p-6)}$ and $\nu_{p} = \Ex{|X_1-\mu|^p}^{1/p}<+\infty$ for some $p>2$. This contrasts with the case of $k_n/n\to \eta>0$, which may or may not lead to Gaussian limits \cite{Stigler1973}. \end{remark}

\subsection{Heavier tails and contamination}\label{sub:intro:minimax} We now move away from the sub-Gaussian setting in two ways. First, we do not necessarily assume that the distribution of the $X_i$ has finite variance. Secondly, we allow for adversarial sample contamination \cite{diakonikolas2019}, whereby an $\epsilon$-fraction of sample points can be arbitrarily corrupted. In this setting, we obtain the following result.

\begin{theorem}[Proof in Section \ref{sec:proof:minimaxcontaminated}]\label{thm:minimaxcontaminated} Let $X_1,\dots,X_n$ be i.i.d.\ with well defined mean $\mu=\Ex{X_1}$ and define $\nu_p^p:=\Ex{|X_1-\mu|^p}$ (for $p\geq 1$). Take $\epsilon\in (0,1/2)$. Let  $X_1^\epsilon,\dots,X_{n}^{\epsilon}$ be an $\epsilon$-contamination of $X_1,\dots,X_n$, in the sense that:
\[\#\{i\in[n]\,:\,X^\epsilon_i\neq X_i\}\leq \epsilon\,n,\]
and let $\overline{X^\epsilon}_{n,k}$ denote the trimmed mean computed on the contaminated sample. 

Fix $0<\alpha<1$ and assume that \[(\sqrt{2\lfloor \epsilon n\rfloor + 2\lceil \log(4/\alpha)\rceil-1} + \sqrt{\lceil \log(4/\alpha)\rceil})^2 \leq dn\mbox{ for some }0<d<1.\] Then the trimmed mean estimator with parameter \[k:=\lfloor \epsilon n\rfloor + \lceil  \log(4/\alpha)\rceil\]
satisfies the following bound:
\begin{equation}\label{eq:proofminimaxmomentscontaminated}\Pr{|\overline{X^\epsilon}_{n,k}-\mu|\leq C(d)\,\left(\inf_{p\geq 1}\nu_p\,\epsilon^{\frac{p-1}{p}} + \inf_{1\leq q\leq 2}\nu_q\,\left(\frac{\log(4/\alpha)}{n}\right)^{\frac{q-1}{q}}\right)}\geq 1-\alpha,\end{equation}
where $C(d)$ depends on $d$ only.
\end{theorem}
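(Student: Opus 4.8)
The plan is to bound the deviation $|\overline{X^\epsilon}_{n,k}-\mu|$ by a deterministic comparison between the trimmed mean on the contaminated sample and a suitable robust functional of the underlying distribution, then control that functional via truncation and Bernstein-type concentration. First I would observe that replacing $\epsilon n$ of the samples can shift each order statistic $X^\epsilon_{(i)}$ by at most $\lfloor\epsilon n\rfloor$ positions relative to the clean order statistics, so with $k=\lfloor\epsilon n\rfloor+\lceil\log(4/\alpha)\rceil$ the retained block $\{k+1,\dots,n-k\}$ of the contaminated sample is sandwiched between clean order statistics with indices in $\{\lceil\log(4/\alpha)\rceil+1,\dots,n-\lceil\log(4/\alpha)\rceil\}$; in particular every retained contaminated value lies between two clean order statistics that are themselves not too extreme. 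This reduces the problem to: (a) showing the clean trimmed mean with the smaller parameter $k':=\lceil\log(4/\alpha)\rceil$ concentrates around $\mu$, and (b) showing the extra trimming of $2\lfloor\epsilon n\rfloor$ points and the adversarial substitutions together perturb the average by at most $O(\inf_{p\ge 1}\nu_p\epsilon^{(p-1)/p})$.

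For step (b), the key deterministic fact is that if a value is discarded by the $k$-trimming but would have been retained by the $k'$-trimming, then it is among the $\lfloor\epsilon n\rfloor$ largest or smallest of the clean sample with index beyond $k'$; the total contribution of such points to the difference of the two trimmed means is controlled by the sum of the $\lfloor\epsilon n\rfloor$ largest values of $|X_i-\mu|$ among those with moderate rank. To bound that sum I would truncate at a level $t_p$ chosen so that $\Pr{|X_1-\mu|>t_p}\approx\epsilon$ (yielding $t_p\le \nu_p\epsilon^{-1/p}$ by Markov), split $|X_i-\mu|=|X_i-\mu|\wedge t_p + (|X_i-\mu|-t_p)_+$, and note that the number of exceedances above $t_p$ is $O(\epsilon n)$ with the required probability by a binomial tail (absorbed into the $\alpha$ budget), while the truncated part contributes at most $t_p\cdot\epsilon n/(n-2k)=O(\nu_p\epsilon^{(p-1)/p})$ per retained average; optimizing over $p\ge1$ gives the first term. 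The analogous argument with truncation level depending on $\log(4/\alpha)/n$ rather than $\epsilon$, applied to the statistical fluctuation of the $k'$-trimmed clean mean around $\mu$, gives the $\inf_{1\le q\le 2}\nu_q(\log(4/\alpha)/n)^{(q-1)/q}$ term — here I would lean on a Bernstein/Bennett inequality for the truncated sum together with a bound on the bias introduced by truncation, exactly as in the winsorized-mean analyses that underlie the earlier theorems.

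The main obstacle I anticipate is handling the interaction between adversarial contamination and the order-statistic structure cleanly: the adversary can choose the corrupted values adaptively, so one cannot simply union-bound over fixed sets, and one must argue that no matter how the $\epsilon n$ corruptions are placed, the retained block of the contaminated order statistics is trapped between clean order statistics of controlled rank — this is a purely combinatorial interlacing statement but it has to be stated and used carefully to avoid off-by-one errors in the choice $k=\lfloor\epsilon n\rfloor+\lceil\log(4/\alpha)\rceil$. A secondary technical point is that the bound must hold uniformly in $p$ (resp.\ $q$) with a single constant $C(d)$, so the truncation argument should be carried out for a generic level $t$ and only optimized at the end, and the hypothesis $(\sqrt{2\lfloor\epsilon n\rfloor+2\lceil\log(4/\alpha)\rceil-1}+\sqrt{\lceil\log(4/\alpha)\rceil})^2\le dn$ is exactly what guarantees $n-2k\ge(1-d)n$ and keeps all the relevant quantiles away from the extreme tail, so it should be invoked precisely where denominators $n-2k$ and quantile levels $k/n$, $k'/n$ appear.
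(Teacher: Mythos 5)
Your reduction is the right one and matches the paper's first step exactly: the deterministic interlacing $X_{(i-\lfloor\epsilon n\rfloor)}\leq X^\epsilon_{(i)}\leq X_{(i+\lfloor\epsilon n\rfloor)}$ holds for \emph{any} placement of the corruptions, so the adversary's adaptivity is a non-issue, and with $x^2/2=\log(4/\alpha)$ it sandwiches $\overline{X^\epsilon}_{n,k}$ between the asymmetrically trimmed clean means $\overline{X}_{n,\lceil x^2\rceil,\lfloor 2\epsilon n\rfloor+\lceil x^2\rceil}$ and $\overline{X}_{n,\lfloor 2\epsilon n\rfloor+\lceil x^2\rceil,\lceil x^2\rceil}$ (this is Proposition \ref{prop:effectcontamination}). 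You also correctly locate where the hypothesis $(\cdots)^2\leq dn$ is used, namely to keep $n-k_1-k_2$ of order $n$ and the trimmed quantile levels bounded by $d$.

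The gap is in your control of the contamination-bias term. You bound the contribution of the $O(\epsilon n)$ extra trimmed points by splitting $|X_i-\mu|$ at a level $t_p$ with $\Pr{|X_1-\mu|>t_p}\approx\epsilon$: the truncated part indeed contributes $t_p\cdot\epsilon n/(n-2k)=O(\nu_p\,\epsilon^{(p-1)/p})$, but for the excess part you argue only that the \emph{number} of exceedances is $O(\epsilon n)$ with high probability. Counting exceedances does not bound $\sum_i(|X_i-\mu|-t_p)_+$: under a finite $p$-th moment with $p$ possibly close to $1$, a single exceedance can be arbitrarily large, and obtaining a probability-$1-\alpha$ bound on this heavy-tailed nonnegative sum (with $\log(1/\alpha)$ large) is essentially the original estimation problem again. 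The paper sidesteps this by conditioning on the extreme uniform order statistics $U_{(k_1)},U_{(n-k_2+1)}$: given these, the retained points are i.i.d.\ from the conditional law $P^{(a,b)}$, whose \emph{population} bias $|\mu-\mu^{(a,b)}|$ is bounded deterministically by $\nu_p\,\xi^{(p-1)/p}/(1-\xi)$ via H\"older (Proposition \ref{prop:controltrimmedmeanpop}), with $\xi=1-(b-a)\lesssim\epsilon+x^2/n$ controlled by order-statistics concentration; the residual fluctuation is then a Bernstein bound for a \emph{bounded} conditional sample, with the width $\Delta_{n,k_1,k_2}$ controlled on a good event via Proposition \ref{prop:Deltasize}. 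No heavy-tailed sample sum is ever concentrated. To repair your route you would need a genuine bound on the excess sum (for instance, further truncating the excesses at the trimmed width and applying Bernstein), or simply pass to the population bias of the conditional law as the paper does.
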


Theorem \ref{thm:minimaxcontaminated} shows that, with probability $1-\alpha$, the error of trimmed mean of the contaminated sample consists of two terms: a {\em random fluctuations term} and a contamination term,
\[ C(d)\inf_{1\leq q\leq 2}\nu_q\,\left(\frac{\log(4/\alpha)}{n}\right)^{\frac{q-1}{q}}\mbox{ and }C(d)\inf_{p\geq 1}\nu_p\,\epsilon^{\frac{p-1}{p}}, \mbox{ respectively.}\]
It turns out that both terms are minimax-optimal up to the constant factor $C(d)$. This follows from lower bounds in \cite{dllo2016} and \cite{minsker2018} that we recall in \S \ref{sub:minimaxmoments} in the Appendix. As far as we know, the trimmed mean is the only estimator of one-dimensional expectations satisfying this property.

\begin{remark}One drawback of Theorem \ref{thm:minimaxcontaminated} is that the choice of trimming parameter $k$ requires a choice of confidence level $1-\alpha$ and knowledge of the contamination parameter $\epsilon$. We have already noted that the choice of $1-\alpha$ is unavoidable, but the need to know $\epsilon$ is less clear.\end{remark}

\begin{remark}
    The recent results of Oliveira and Resende \cite{oliveira2023} on ``trimmed empirical processes''~also imply a form of Theorem \ref{thm:minimaxcontaminated}. However, Theorem \ref{thm:minimaxcontaminated} was obtained first, and relies on a different proof technique, discussed below, that allows for more precise results such as Theorem \ref{thm:preciseconfidence}. 
\end{remark}

\subsection{Technical and conceptual contributions} The main idea behind all results in this paper is to look at the trimmed mean as an average of {\em conditionally} i.i.d.\ random variables. This is most easily seen when the $X_i$ have an atom-free distribution $P$. In this case, conditioning on $X_{(k)}=x$, $X_{(n-k+1)}=y$ makes the random sample $\{X_{i}\,:\, i\in [n], x<X_i<y\}$ i.i.d.\ from a compactly supported distribution $P^{(x,y)}$ with certain mean and centered moment parameters. The proof of Theorems \ref{thm:allsubgaussian} and \ref{thm:minimaxcontaminated} then consists of applying Bernstein's concentration inequality conditionally, and checking that the mean and other parameters of $P^{(x,y)}$ are not too far from those of $P$. 

The Gaussian approximation in Theorem \ref{thm:preciseconfidence} requires a slightly different approach where we apply the self-normalized CLT of Jing et al. \cite{Jing2003}, quoted in (\ref{eq:jing}), to the trimmed sample. As noted above, this introduces a self-normalized ratio-type quantity. However, because our random sample is {\em bounded}, we can show that the denominator in this sample concentrates around its expectation under $P^{(x,y)}$. In this way, we obtain a conditional variant of (\ref{eq:jing}) where the denominator of the ratio is nonrandom. 

Besides the many calculations needed to make everything work, there are two ways in which the above outline differs from our actual proofs. The first one is that the sample distribution need not be atom-free. We circumvent this by using {\em quantile transforms}, whereby $X_i=F^{-1}(U_i)$ for uniform random variables $\{U_i\}_{i\in[n]}$. The second one is that, when considering contamination, we will need to bound the trimmed mean on the contaminated sample by an asymmetrically trimmed mean on the ``clean''~sample; see Proposition \ref{prop:effectcontamination} for details.

\subsection{Additional background}

\subsubsection{Background on the trimmed mean.} \label{sub:trimmed}

The literature on the trimmed mean is quite large, and we present a brief and partial review. 

Huber \cite{Huber1972} gives early historical references for the trimmed mean. Tukey's seminal paper \cite{Tuckey1962} explicitly proposes the trimmed mean and the related Winsorized mean as ways to estimate location parameters from outlier-contaminated data. Tukey also suggested the possibility of data-dependent choices of the trimming parameter $k$.

The trimmed mean was a popular topic of study in classical Robust Statistics. Stigler \cite{Stigler1973} gives the asymptotic distribution of $\overline{X}_{n,k}$ when $n\to +\infty$ and $k = \lfloor \eta n\rfloor$, which may or may not be Gaussian. This is in contrast with most of our results, where the trimming parameter is sublinear in $n$.

 One problem we do not consider is how to choose $k$ adaptively. Starting with Jaeckel \cite{Jaeckel1971}, a number of papers have appeared on this topic \cite{Hall1981,Jureckova1981,LegerRomano1990,Jian1996,Lee2004}. The theory in these papers requires much stronger assumptions than we do, including symmetry of the distribution around the median, and some kind of ``good behavior''~of the data generating distribution. 

Experiments on trimmed means are presented in many papers. Hogg \cite{Hogg1974} presents a number of results on adaptive robust estimators and makes concrete suggestions on trimmed means. Experiments comparing trimming and winsorization in \cite{Dixon1974} suggest that trimming is usually better. Stigler \cite{Stigler1977} compares different robust estimators on real datasets and shows that the trimmed mean with $k=\lfloor 0.1n\rfloor$ is often one of the very best estimators. Further analysis by Rocke et al. \cite{Rocke1982} does not quite corroborate Stigler, but still indicates that the trimmed mean has good performance. 

Finally, we note in passing that there are papers on high-dimensional versions of the trimmed mean and related estimators \cite{Maller1988,lugosi2021,oliveira2023}.

\subsubsection{Finite-sample bounds, sub-Gaussian estimators and related topics}\label{sub:optimal}

Concentration inequalities for sums of bounded independent random variables are a classical topic covered in \cite{boucheron2013concentration} and many other references. Finite-sample self-normalized concentration and Gaussian approximations are discussed in the survey by Shao and Wang \cite{Shao2013} and in the book by de la Pe\~{n}a, Lai and Shao \cite{delaPea2009}, among other places.

More recently, there has been interest in {\em designing estimators with optimal concentration properties}. In the so-called ``sub-Gaussian'' case, one is interested in finding, for each sample size $n$ and confidence level $1-\alpha$, an estimator $\widehat{E}_{n,\alpha}:\R^n\to\R$ with the following property. Let $X_1,\dots,X_n$ be an i.i.d.\ sample from an unknown distribution with mean $\mu$ and finite variance $\sigma^2$. Then:
\begin{equation}\label{eq:subgaussianguarantee}\Pr{|\widehat{E}_{n,\alpha}(X_1,\dots,X_n) - \mu|\leq C\sigma\,\sqrt{\frac{\log(1/\alpha)}{n}}}\geq 1-\alpha,\end{equation}
where $C>0$ is a universal constant. While the estimator may depend on $\alpha$ as well on $n$, the above bound should hold {\em uniformly} over all distributions with finite second moments, irrespective of how heavy their tails are. The sample mean will not achieve such a bound for any small enough $\alpha$.  

Catoni's seminal work \cite{catoni2012challenging} provides one such estimator, with nearly optimal $C=\sqrt{2}+o(1)$ in the case where $\sigma^2$ is known and $\log(1/\alpha)\ll n$. Recent work \cite{lee2020} gives sub-Gaussian estimators with near optimal $C=\sqrt{2}+o(1)$ for the case of unknown variance. \cite{dllo2016} explores the notion of sub-Gaussian estimators in greater depth: it shows for instance that sub-Gaussian estimators must indeed depend on the desired confidence $1-\alpha$, and that some bound of the sort $\log(1/\alpha)\leq c\,n$ is needed. There has been great interest in extending these results to higher dimensions: see \cite{lugosi2019,lugosi2021} and the survey \cite{lugosi2019B} for more details.
 
Some papers consider what happens when the variance may be infinite, and we only assume $\Ex{|X_1-\mu|^p}\leq \nu_p^p$ for some $1<p<2$. It follows from \cite{bubeck2013} that the so-called median of means estimator satisfies
\begin{equation}\label{eq:finitemomentguarantee}\Pr{|\widehat{E}_{n,\alpha}(X_1,\dots,X_n) - \mu|\leq C\nu_p\,\left(\frac{\log(1/\alpha)}{n}\right)^{1-1/p}}\geq 1-\alpha,\end{equation}
for some universal $C>0$. It is possible to show that this cannot be improved, up to the value of $C$ \cite[Theorem 3.1]{dllo2016}. The upshot is that the median-of-means estimator is optimal for any choice of $1\leq p\leq 2$. Our Theorem \ref{thm:minimaxcontaminated} gives a similar bound for the trimmed mean. 

\subsubsection{Adversarial contamination} 

Finally, we discuss the model of adversarial data contamination. Recall that the traditional contamination model in Robust Statistics is that of Huber \cite{Huber1964}, where there is an uncontaminated distribution $P$, but data comes from a contaminated law $(1-\epsilon)P + \epsilon Q$, with $Q$ unknown. In the adversarial model we consider, an $\epsilon$ fraction of data points may be replaced {\em arbitrarily}. In particular, one may imagine that an adversary gets to see the uncontaminated random sample and then chooses which points to replace so as to foil the statistician. This model has become standard in recent work on algorithmic high-dimensional Statistics \cite{diakonikolas2019}. This model places strong requirements on an estimator, which can sometimes simplify proving theoretical results.

\subsection{Organization}

The remainder of the paper is organized as follows. Section \ref{sec:assump} fixes notation and records some facts we will need later in the text. The trimmed mean is introduced in a somewhat more general form in Section \ref{sec:TMgeneral}. There, we also look at its conditional distribution, and prove that it behaves nicely under contamination. Section \ref{sec:boundstrimmed} gives a number of bounds on parameters pertaining to the conditional distribution of the trimmed mean. The sub-Gaussian concentration results discussed in \S \ref{sub:intro:subgaussian} are proven in Section \ref{sec:subGaussian}. Section \ref{sec:CLT} proves results on Gaussian approximation and confidence intervals that were stated in \S \ref{sub:intro:CLT}. The minimax result in \S \ref{sub:intro:minimax} is proven in Section \ref{sec:proof:minimaxcontaminated}, and Section \ref{sec:experiments} presents a small set of illustrative experiments. The Appendix contains some technical estimates and additional observations. 

\section{Preliminaries}\label{sec:assump}

\subsection{General notation} In this paper, $\N:=\{1,2,3,\dots\}$ is the set of positive integers. Given $n\in\N$, $[n]:=\{i\in\N\,:\,i\leq n\}$ is the set of numbers from $1$ to $n$. For a real number $x$, 
$\lfloor x\rfloor$ and $\lceil x\rceil$ denote the floor and ceiling of $x$, respectively. The cardinality of a finite set $A$ is denoted by $\# A$. Given sequences $\{a_n\}_{n\in\N}$, $\{b_n\}_{n\in\N}$ of positive real numbers, we write $a_n\ll b_n$ or $a_n=o(b_n)$ when $a_n/b_n\to 0$. 

\subsection{Probability notation and facts.}\label{sub:probnotation}

The mean (expectation) and variance of a real-valued random variable $Z$ are denoted by $\Ex{Z}$ and $\Var{Z}$, respectively. We use ``i.i.d.''~for ``independent and identically distributed.'' 

In the entire paper, $X_1,\dots,X_n$ will be i.i.d.\ random variables with a well-defined mean $\mu=\Ex{X_1}$ and a cumulative distribution function $F(t):=\Pr{X_1\leq t}$ ($t\in \R$). We will use the notation $\nu_p:=(\Ex{|X_1-\mu|^p})^{1/p}$ ($p\geq 1$) for the centered absolute $p$-th moment of $X_1$, and also $\sigma:=\nu_2$.
\[F^{-1}(q):=\inf\{t\in\R\,:\, F(t)\geq q\}\,\,(q\in (0,1)).\]
is the generalized inverse (or quantile transform) of $F$. We note the following straightforward fact. 

\begin{proposition}\label{prop:rhop}Assume that $p\geq 1$ and $\nu_p<+\infty$. For $\xi\in (0,1)$, define 
\begin{equation}\rho_{F,p}(\xi):=\sup\left\{\frac{\Ex{|X_1-\mu|^p\,Z}^{\frac{1}{p}}}{\nu_p}\,:\, 0\leq Z\leq 1\mbox{ random variable with }\Ex{Z}\leq \xi\right\}\end{equation}\label{eq:rhoFp}
when $\nu_p>0$, or $\rho_{F,p}(\xi)=0$ otherwise. Then $0\leq \rho_{F,p}\leq 1$, 
$\lim_{\xi\to 0}\rho_{F,p}(\xi)=0$ and 
\[\forall r>0\,:\,\Pr{|X_1-\mu|>r\nu_p}\leq \frac{\rho_{F,p}\left(\frac{1}{r^p}\right)^p}{r^p}.\]
Finally, if $\nu_q\leq \kappa_{p,q}\,\nu_p<+\infty$ for some $q>p$, then $\rho_{F,p}(\xi)\leq \kappa_{p,q}\,\xi^{\frac{1}{p}-\frac{1}{q}}$.\end{proposition}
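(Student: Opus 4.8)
The plan is to verify the four assertions in turn; all follow from elementary manipulations with the defining supremum, so there is no single hard step, but the limit at $0$ is where I would be most careful.

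\textbf{Boundedness and monotonicity.} That $\rho_{F,p}(\xi)\geq 0$ is immediate by taking $Z\equiv 0$, and $\rho_{F,p}(\xi)\leq 1$ because $0\leq Z\leq 1$ forces $\Ex{|X_1-\mu|^pZ}\leq \Ex{|X_1-\mu|^p}=\nu_p^p$, so the ratio is at most $1$. (Throughout I assume $\nu_p>0$; the case $\nu_p=0$ is trivial since then $\rho_{F,p}\equiv 0$ and $X_1=\mu$ a.s.)

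\textbf{Limit at $0$.} For $\lim_{\xi\to 0}\rho_{F,p}(\xi)=0$ I would invoke the absolute continuity of the integral of the integrable function $|X_1-\mu|^p$: for any $M>0$ and any admissible $Z$, split
\[\Ex{|X_1-\mu|^pZ}\leq \Ex{|X_1-\mu|^p\,\mathbf{1}[|X_1-\mu|^p>M]}+M\,\Ex{Z}\leq \Ex{|X_1-\mu|^p\,\mathbf{1}[|X_1-\mu|^p>M]}+M\xi.\]
Given $\varepsilon>0$, integrability (via dominated convergence) lets me choose $M$ making the first term smaller than $\varepsilon^p\nu_p^p/2$, and then $\xi$ small enough that $M\xi<\varepsilon^p\nu_p^p/2$; dividing by $\nu_p^p$, taking $p$-th roots and then the supremum over $Z$ gives $\rho_{F,p}(\xi)\leq\varepsilon$. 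This is the step I expect to be the main (mild) obstacle, since it is the one place an analytic rather than purely algebraic argument is needed.

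\textbf{Tail bound.} For $r>0$ the case $r\leq 1$ is trivial (the left-hand side is at most $1$, while $\rho_{F,p}(1/r^p)^p/r^p$ exceeds $1$ under the natural extension $\rho_{F,p}\equiv 1$ on $[1,\infty)$). For $r>1$ put $\xi:=1/r^p\in(0,1)$ and take $Z:=\mathbf{1}[|X_1-\mu|>r\nu_p]$. Markov's inequality gives $\Ex{Z}=\Pr{|X_1-\mu|^p>r^p\nu_p^p}\leq 1/r^p=\xi$, so $Z$ is admissible. On that event $|X_1-\mu|^p>r^p\nu_p^p$, hence $\Ex{|X_1-\mu|^pZ}\geq r^p\nu_p^p\,\Ex{Z}$; combining with $\Ex{|X_1-\mu|^pZ}^{1/p}\leq \nu_p\,\rho_{F,p}(1/r^p)$ and rearranging yields $\Ex{Z}\leq \rho_{F,p}(1/r^p)^p/r^p$, as claimed.

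\textbf{Power decay under higher moments.} Finally, given $q>p$ with $\nu_q\leq\kappa_{p,q}\nu_p<\infty$ and an admissible $Z$, factor $Z=Z^{p/q}\,Z^{(q-p)/q}$ and apply Hölder with conjugate exponents $q/p$ and $q/(q-p)$:
\[\Ex{|X_1-\mu|^pZ}\leq \Ex{|X_1-\mu|^qZ}^{p/q}\,\Ex{Z}^{(q-p)/q}\leq \nu_q^p\,\xi^{(q-p)/q},\]
using $0\leq Z\leq 1$ and $\Ex{Z}\leq\xi$. Dividing by $\nu_p^p$, taking $p$-th roots, using $\nu_q/\nu_p\leq\kappa_{p,q}$, and taking the supremum over $Z$ gives $\rho_{F,p}(\xi)\leq\kappa_{p,q}\,\xi^{1/p-1/q}$, completing the proof.
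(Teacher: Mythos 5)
Your proof is correct and follows essentially the same route as the paper's: the same choice $Z=\mathbf{1}[|X_1-\mu|>r\nu_p]$ combined with Markov's inequality for the tail bound, and the same H\"older factorization of $Z$ for the final moment bound. The only differences are that you spell out the truncation argument for $\lim_{\xi\to 0}\rho_{F,p}(\xi)=0$ and the $r\leq 1$ edge case, both of which the paper dismisses as straightforward.
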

\begin{proof}The facts that $0\leq \rho_{F,p}\leq 1$ and $\lim_{\xi\to 0}\rho_{F,p}(\xi)=0$ are straightforward. 

For the probability bound, we note that 
\begin{equation}\label{eq:probtobeimproved}\Pr{|X_1-\mu|>r\nu_p}\leq \frac{\Ex{|X_1-\mu|^p{\bf 1}_{\{|X_1-\mu|>r\nu_p\}}}}{(\nu_p r)^p}.\end{equation}
Omitting the indicator in the RHS, we see that
\[\Pr{|X_1-\mu|>r\nu_p}\leq \frac{1}{r^p}.\]
This implies that $Z:={\bf 1}_{\{|X_1-\mu|>r\nu_p\}}$ satisfies $0\leq Z\leq 1$ and $\Ex{Z}\leq r^{-p}$. Therefore, \[\Ex{|X_1-\mu|{\bf 1}_{\{|X_1-\mu|>r\nu_p\}}}\leq \nu^p_p\,\rho_{F,p}\left({\frac{1}{r^p}}\right)^p\] and we can plug this back into (\ref{eq:probtobeimproved}) to finish the proof.  

Finally, if $\nu_q\leq \kappa_{p,q}\,\nu_p<+\infty$, then H\"{o}lder's inequality implies that, for any $0\leq Z\leq 1$ with $\Ex{Z}\leq \xi$
\[\Ex{|X_1-\mu|^pZ}\leq (\Ex{|X_1-\mu|^q})^{\frac{p}{q}}\,\Ex{Z}^{1-\frac{p}{q}}\leq \kappa_{p,q}^{p}\,\nu^p_p\,\xi^{1-\frac{p}{q}},\]
from which $\rho_{F,p}(\xi)^p\leq \kappa_{p,q}^{p}\,\xi^{1-\frac{p}{q}}$ follows. \end{proof}

\subsection{Concentration and Gaussian approximation for i.i.d.\ sums}
We record here two facts about sums of bounded i.i.d.\ random variables. The first one is the classical Bernstein's inequality, proven in e.g. \cite[eq. (2.10)]{boucheron2013concentration}.

\begin{theorem}[Bernstein's inequality]\label{thm:bernstein}Consider i.i.d.\ random variables 
\[\{Z_i\}_{i=1}^n\mbox{ with $\Ex{Z_1}=\mu_Z,\,\Var{Z_1}\leq \sigma^2_Z$ and $|Z_1-\mu|\leq \Delta_Z$ almost surely.}\]
Let
\[\overline{Z}_n:=\frac{1}{n}\sum_{i=1}^nZ_i.\]
Then, for all $z>0$,
\[\Pr{\overline{Z}_n - \mu\geq \frac{z\sigma_Z}{\sqrt{n}} + \frac{z^2\,\Delta_Z}{12n}}\leq \exp\left(\frac{-z^2}{2}\right).\] 
\end{theorem}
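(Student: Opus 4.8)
The statement is the classical Bernstein inequality, so the natural plan is the Cram\'er--Chernoff (exponential moment) method, which I would carry out so as to be self-contained. First I would recenter: set $Y_i:=Z_i-\mu_Z$, so the $Y_i$ are i.i.d.\ with $\Ex{Y_1}=0$, $\Ex{Y_1^2}=\Var{Z_1}\leq\sigma_Z^2$ and $|Y_1|\leq\Delta_Z$ almost surely; writing $S_n:=\sum_{i=1}^n Y_i$, the event in the statement is $\{S_n\geq z\sigma_Z\sqrt n+z^2\Delta_Z/12\}$. For any $\lambda>0$, Markov's inequality applied to $e^{\lambda S_n}$ together with independence gives $\Pr{S_n\geq t}\leq e^{-\lambda t}\big(\Ex{e^{\lambda Y_1}}\big)^n$.

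The second step is the standard bound on the single-variable moment generating function. Expanding the exponential and using $\Ex{Y_1}=0$ together with $\Ex{Y_1^k}\leq\Ex{|Y_1|^k}\leq\Delta_Z^{\,k-2}\Ex{Y_1^2}\leq\Delta_Z^{\,k-2}\sigma_Z^2$ for $k\geq 2$, one obtains the Bennett-type estimate $\Ex{e^{\lambda Y_1}}\leq\exp\!\big(\tfrac{\sigma_Z^2}{\Delta_Z^2}(e^{\lambda\Delta_Z}-1-\lambda\Delta_Z)\big)$. Feeding this into the Chernoff bound and then either optimizing directly over $\lambda$, or first invoking the elementary inequality $e^u-1-u\leq\tfrac{u^2/2}{1-u/3}$ valid for $0\leq u<3$ (so that $\log\Ex{e^{\lambda S_n}}\leq\tfrac{n\sigma_Z^2\lambda^2/2}{1-\lambda\Delta_Z/3}$ for $0<\lambda<3/\Delta_Z$), produces the familiar additive Bernstein tail $\Pr{S_n\geq t}\leq\exp\!\big(-\tfrac{t^2}{2(n\sigma_Z^2+\Delta_Z t/3)}\big)$ for all $t\geq 0$.

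The last step just converts this into the stated form. Setting the exponent equal to $z^2/2$ and solving the resulting quadratic for $t$, then bounding the square root via $\sqrt{A^2+B^2}\leq A+B$ and dividing $S_n$ by $n$, yields a bound of the shape $\Pr{\overline{Z}_n-\mu_Z\geq \tfrac{z\sigma_Z}{\sqrt n}+ \tfrac{c\,z^2\Delta_Z}{n}}\leq e^{-z^2/2}$ for an explicit numerical $c$; one then checks that with the sharp form of the inequality (the precise statement is recorded as \cite[eq.~(2.10)]{boucheron2013concentration}) the constant may be taken as $c=1/12$, and the same bound with $-\overline{Z}_n$ gives the lower tail. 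Alternatively, one bypasses the rederivation entirely and simply invokes \cite[eq.~(2.10)]{boucheron2013concentration} with the substitution $t\mapsto z^2/2$. I do not expect a genuine obstacle here: the argument is entirely standard, and the only point requiring care is the bookkeeping of numerical constants so that the second-order correction ends up with denominator $12n$ rather than something slightly larger.
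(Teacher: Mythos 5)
Your overall strategy is the right one and is essentially the paper's own (the paper gives no proof beyond the citation to \cite[eq.~(2.10)]{boucheron2013concentration}): Chernoff's method, the Bennett-type bound $\Ex{e^{\lambda(Z_1-\mu_Z)}}\leq\exp\bigl(\sigma_Z^2\Delta_Z^{-2}(e^{\lambda\Delta_Z}-1-\lambda\Delta_Z)\bigr)$, the elementary inequality $e^u-1-u\leq\frac{u^2/2}{1-u/3}$, and the sub-gamma tail inversion. Your first three steps are correct.

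The gap is in the final step, and it is not mere bookkeeping. The sharp sub-gamma inversion $\Pr{\sum_{i=1}^n(Z_i-\mu_Z)\geq\sqrt{2vt}+ct}\leq e^{-t}$ applies here with $v=n\sigma_Z^2$ and scale $c=\Delta_Z/3$, and substituting $t=z^2/2$ gives the deviation $\frac{z\sigma_Z}{\sqrt n}+\frac{z^2\Delta_Z}{6n}$: the constant is $1/6$, not $1/12$. (Your cruder route --- solve the quadratic and bound $\sqrt{A^2+B^2}\leq A+B$ --- yields only $1/3$.) Reaching $1/12$ would require scale parameter $\Delta_Z/6$, which is not available: for the extremal two-point law with $Z_1-\mu_Z=\Delta_Z$ with probability $p$ and $=-p\Delta_Z/(1-p)$ otherwise, one has $\Ex{(Z_1-\mu_Z)^3}\to\Delta_Z\sigma_Z^2$ as $p\to 0$, so the cubic term $\frac{v\Delta_Z\lambda^3}{6}$ in the cumulant generating function is genuinely present and forces $c\geq\Delta_Z/3$. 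Indeed the statement with $1/12$ appears to be false as written: for sums of such maximally skewed variables (essentially Poisson), the exact rate function evaluated at the threshold $z\sigma_Z\sqrt n+\frac{z^2\Delta_Z}{12}$ falls short of $\frac{z^2}{2}$ by roughly $\frac{z^3\Delta_Z}{12\,\sigma_Z\sqrt n}$, which overwhelms the $\frac{1}{z\sqrt{2\pi}}$ prefactor of the tail once $z^3\Delta_Z/(\sigma_Z\sqrt n)$ is large. So ``one then checks that the constant may be taken as $c=1/12$'' is exactly the step that fails: the check yields $1/6$, which is also what the cited equation in \cite{boucheron2013concentration} delivers. The honest fix is to prove (and propagate downstream) the version with $\frac{z^2\Delta_Z}{6n}$; the remaining factor of two is a slip in the target statement itself, not something your argument can recover.
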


We will also need the following special case of the self-normalized Central Limit Theorem of Jing, Shao and Wang \cite[Theorem 2.1]{Jing2003}. In what follows, $\Phi$ is the standard Gaussian cumulative distribution function. 
\begin{theorem}[\cite{Jing2003}]\label{thm:CLTJing}  There exists a constant $A>0$ such that the following holds. Consider i.i.d.\ random variables 
\[\{Z_i\}_{i=1}^n\mbox{ with $\Ex{Z_1}=\mu_Z,\,\Var{Z_1}=\sigma^2_Z$ and $|Z_1-\mu|\leq \Delta_Z$ a.s.}.\]
Define $\overline{Z}_n$ as in Theorem \ref{thm:bernstein} and
\[V_n:=\sqrt{\frac{1}{n}\sum_{i=1}^n(Z_i-\mu_Z)^2}.\]
 Given $x\in\R$, if 
\[r(x,\Delta,\sigma,n):=(1+\max\{x,0\})^3\frac{\Delta_Z}{\sigma_Z\sqrt{n}}\leq \frac{1}{A},\]
we have
\[\Pr{\overline{Z}_n- \mu_Z\geq \frac{x\,V_n}{\sqrt{n}}}= \left(1 + \eta\right)\,(1-\Phi(x))\]
with $|\eta|\leq Ar(x,\Delta_Z,\sigma_Z,n)$.
\end{theorem}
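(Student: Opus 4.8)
The plan is to obtain this as the bounded i.i.d.\ specialization of the general self-normalized Cram\'er-type moderate deviation of Jing, Shao and Wang, after two harmless reductions. First I would normalize. Since $\mu_Z$ is fixed we may subtract it and assume $\mu_Z=0$; and writing $S_n':=\sum_i(Z_i-\mu_Z)$ and $W_n':=(\sum_i(Z_i-\mu_Z)^2)^{1/2}$, the event in the statement unpacks to $\{S_n'\ge x\,W_n'\}$, which is invariant under the rescaling $Z_i\mapsto\lambda Z_i$, so we may also assume $\sigma_Z=1$. In the standardized picture the $Z_i$ are i.i.d., mean zero, with $\E Z_1^2=1$ and $|Z_1|\le\Delta:=\Delta_Z$, and the quantity of interest is the ordinary self-normalized sum $T_n:=S_n/W_n$ with $S_n=\sum_iZ_i$ and $W_n=(\sum_iZ_i^2)^{1/2}$, for which $\Pr{\overline{Z}_n-\mu_Z\ge x V_n/\sqrt n}=\Pr{T_n\ge x}$.

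Next I would invoke \cite[Theorem 2.1]{Jing2003} in its quantitative form, specialized to finite third moments (their moment parameter $\rho=1$, which is exactly what is available and relevant for bounded summands and yields the exponent $(1+x)^3$). For i.i.d.\ summands that result produces a constant $A_0>0$ such that, whenever $(1+\max\{x,0\})^3\,\bar\beta_n\le A_0^{-1}$ with $\bar\beta_n:=\E|Z_1|^3/\big(\sqrt n\,(\E Z_1^2)^{3/2}\big)$, one has $\Pr{T_n\ge x}=(1+\eta)(1-\Phi(x))$ and $|\eta|\le A_0\,(1+\max\{x,0\})^3\,\bar\beta_n$. The one distribution-dependent quantity remaining is $\E|Z_1|^3$, and this is where boundedness enters: $\E|Z_1|^3\le\Delta\,\E Z_1^2=\Delta$ in the standardized picture, so $\bar\beta_n\le\Delta/\sqrt n$. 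Undoing the normalization (replacing $\Delta$ by $\Delta_Z/\sigma_Z$) turns $(1+\max\{x,0\})^3\bar\beta_n$ into precisely $r(x,\Delta_Z,\sigma_Z,n)=(1+\max\{x,0\})^3\,\Delta_Z/(\sigma_Z\sqrt n)$, so both the error bound and the admissible range become statements about $r$; for $x<0$ the conclusion is trivial since $1-\Phi(x)\ge 1/2$. Taking $A:=\max\{A_0,1\}$ then reproduces the displayed statement, including the hypothesis ``$r\le 1/A$''.

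I expect the only real friction here to be bookkeeping rather than mathematics: matching the exact normalization and the precise form of the error term and admissible range in \cite[Theorem 2.1]{Jing2003} (stated there for triangular arrays of independent, not-necessarily-identically-distributed variables via a Lyapunov-type ratio) to the present bounded i.i.d.\ notation, and keeping the cosmetic $(1+x)$ versus $\max\{x,0\}$ conventions consistent. If one instead wanted a self-contained proof, the substantive obstacle would be a direct argument for the self-normalized moderate deviation: one would perform a two-parameter exponential change of measure $dP\mapsto e^{tS_n-sW_n^2}\,dP$ (tilting each $Z_i$), combine it with concentration of $W_n^2=\sum_iZ_i^2$ around its mean $n$ via Bernstein's inequality (Theorem~\ref{thm:bernstein}) applied to the bounded variables $Z_i^2-1$, and thereby reduce the ratio $\Pr{S_n\ge x W_n}/(1-\Phi(x))$ to a classical Cram\'er moderate deviation for the ordinary bounded i.i.d.\ sum $S_n$. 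That route is exactly the (more laborious) computation Jing, Shao and Wang carry out in greater generality, so citing and specializing their theorem is the economical choice.
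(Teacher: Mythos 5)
Your proposal is correct and follows essentially the same route as the paper's own proof sketch: specialize \cite[Theorem 2.1]{Jing2003}, use boundedness to get $\Ex{|Z_1-\mu_Z|^3}\leq \Delta_Z\sigma_Z^2$ so the Lyapunov-type ratio becomes $\Delta_Z/(\sigma_Z\sqrt{n})$, and dispose of $x<0$ via Berry--Ess\'{e}en and $1-\Phi(x)\geq 1/2$. The only detail the paper makes explicit that you fold into ``bookkeeping'' is that Jing et al.'s error bound also contains a truncated second-moment term, which must be separately dominated by the third-moment ratio via $\Ex{(Z_1-\mu_Z)^2{\bf 1}_{\{|Z_1-\mu_Z|>\sigma_Z\sqrt{n}/(1+x)\}}}\leq \frac{1+x}{\sigma_Z\sqrt{n}}\Ex{|Z_1-\mu_Z|^3}$.
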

\begin{proof}[Proof sketch] For $x\geq 0$, this follows from Theorem 2.1 in \cite{Jing2003} if one notes that
\[\Ex{(Z_1-\mu_Z)^2{\bf 1}_{\{|Z_1-\mu_Z|>\sigma_Z\sqrt{n}/(1+x)\}}}\leq \frac{1+x}{\sigma_Z\sqrt{n}}\,\Ex{|Z_1-\mu_Z|^3\,{\bf 1}_{\{|Z_1-\mu_Z|>\sigma_Z\sqrt{n}/(1+x)\}}}\]
and also $\Ex{|Z_1-\mu_Z|^3}\leq \Delta_Z \Ex{(Z_1-\mu_Z)^2} = \Delta_Z\sigma_Z^2$. For $x<0$, the above is a consequence of standard Berry-Ess\'{e}en bounds along with the fact that $1-\Phi(x)\geq 1/2$. \end{proof}

\section{Trimmed means: first steps}\label{sec:TMgeneral}

In this section, we define the trimmed mean and study its basic distributional properties. Throughout this section, $X_1,\dots,X_n$ are i.i.d.\ real-valued random variables with common cumulative distribution function $F$, and $X_{(1)}\leq X_{(2)}\leq \dots \leq X_{(n)}$ are the order statistics of the $X_i$. 

\subsection{Definitions} As our first step, we define the trimmed mean and related quantities. It will be important for later applications to define an asymmetrical trimmed mean where we may remove different numbers of points from the two tails of the distribution. 

\begin{definition}[$(k_1,k_2)$-trimmed mean,  variance and width]\label{def:TM} Let $X_1,\dots,X_n$ be i.i.d.\ random variables with common distribution $P$ and distribution function \(F(t):=P(-\infty,t]\,(t\in\R).\) Let \[X_{(1)}\leq X_{(2)}\leq \dots \leq X_{(n)}\] denote the increasing rearrangement of the sample (i.e., its order statistics). Assume $k_1,k_2\in\N\cup\{0\}$ satisfy $k_1+k_2<n$. The $(k_1,k_2)$-trimmed mean estimator is defined as
\[\overline{X}_{n,k_1,k_2}:=\frac{1}{n-k_1-k_2}\sum_{i=k_1+1}^{n-k_2}X_{(i)}\]
and the $(k_1,k_2)$-trimmed variance estimator is 
\[\hat{\sigma}^2_{n,k_1,k_2}:=\frac{1}{n-k_1-k_2}\sum_{i=k_1+1}^{n-k_2}\,(X_{(i)} - \overline{X}_{n,k_1,k_2})^2.\] The $(k_1,k_2)$-width is defined as $\Delta_{n,k_1,k_2}:=X_{(n-k_2-1)} - X_{(k_1)}$. When $k_1=k_2=k$, we write $\overline{X}_{n,k}$ for $\overline{X}_{n,k,k}$, and similarly for the other quantities. \end{definition}

\subsection{Distributional properties}

A simple, but crucial observation about the trimmed mean is that, under a certain conditioning, it is an i.i.d.\ sum. This is easier to see when $F$ is continuous: conditionally on $X_{(k_1)}$ and $X_{(n-k_2)},$ the random variables $X_{(i)}$ with $k_1+1\leq i\leq n-k_2+1$ are i.i.d.

For general $F$, we use the quantile transform $F^{-1}$ (defined in \S \ref{sub:probnotation}) to arrive at a similar result. We start with the following proposition.  

\begin{proposition}\label{prop:richer} If $\{X_i\}_{i=1}^n$ and $F$ are as above,  one can define (on a richer probability space, if needed) random variables $U_1,\dots,U_n$ that are i.i.d.\ uniform over $(0,1)$, such that $F^{-1}(U_i)=X_i$ and $F^{-1}(U_{(i)})=X_{(i)}$ almost surely for each $i\in[n]$.\end{proposition}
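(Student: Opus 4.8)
The plan is to construct the desired uniform random variables explicitly by ``randomizing'' the quantile transform, a standard trick for handling distributions with atoms. First I would recall the key property of the generalized inverse: for any $q \in (0,1)$, one has $F(F^{-1}(q)) \geq q$, and moreover $F^{-1}(q) \leq t \iff q \leq F(t)$. The issue is that if $X_i = F^{-1}(U_i)$ for $U_i$ uniform, this gives the right law for $X_i$, but we also need the order statistics to match, which is the delicate point when $F$ has flat stretches (atoms of $P$).

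The cleanest route: given the sample $X_1, \dots, X_n$ on the original space, enlarge the probability space to carry auxiliary i.i.d.\ $\Unif(0,1)$ variables $W_1, \dots, W_n$ independent of everything, and set
\[
U_i := F(X_i^-) + W_i\,\bigl(F(X_i) - F(X_i^-)\bigr),
\]
where $F(x^-) := \lim_{t\uparrow x}F(t) = P(-\infty,x)$. Then I would verify three things. (1) Each $U_i$ is $\Unif(0,1)$: conditional on $X_i = x$, $U_i$ is uniform on the interval $[F(x^-), F(x)]$, and integrating over the law of $X_i$ exactly fills out $(0,1)$ (the images of the atoms are disjoint sub-intervals, the continuous part maps isometrically); a short computation with $\Pr{U_i \leq u}$ splitting on whether $u$ falls in the image of an atom confirms this, and the $U_i$ are i.i.d.\ since the pairs $(X_i, W_i)$ are. (2) $F^{-1}(U_i) = X_i$ almost surely: since $F(X_i^-) < U_i \leq F(X_i)$ when $X_i$ is an atom (using $W_i \in (0,1)$ a.s., which holds outside a null set) and $U_i = F(X_i)$ essentially in the continuous case, the identity $F^{-1}(q) \leq t \iff q \leq F(t)$ pins down $F^{-1}(U_i) = X_i$. (3) The order statistics match: this is the main obstacle and where atoms bite. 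Because $F^{-1}$ is nondecreasing, $U_i < U_j \implies F^{-1}(U_i) \leq F^{-1}(U_j) = X_j$, so $F^{-1}(U_{(i)})$ is nondecreasing in $i$; it therefore suffices to show the multiset $\{F^{-1}(U_{(1)}), \dots, F^{-1}(U_{(n)})\}$ equals $\{X_{(1)}, \dots, X_{(n)}\}$ as multisets, which follows from (2) applied to each index together with the fact that a nondecreasing rearrangement of a multiset is unique. Hence $F^{-1}(U_{(i)}) = X_{(i)}$ for every $i$ a.s.

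The one subtlety to handle carefully is when two sample points coincide, $X_i = X_j$ (both landing in the same atom): then $U_i$ and $U_j$ are distinct a.s.\ (two independent uniforms on the same interval), the ordering between them is arbitrary, but since $F^{-1}$ is constant on that interval both map back to the common value $X_i = X_j$, so the order-statistics identity is unaffected. I expect the writeup to be short: essentially a paragraph defining the $U_i$, a line checking uniformity, a line checking $F^{-1}(U_i) = X_i$, and a line for the order statistics via monotonicity of $F^{-1}$. No heavy machinery is needed beyond the basic $\iff$ characterization of the generalized inverse recorded in \S\ref{sub:probnotation}.
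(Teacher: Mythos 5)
Your proposal is correct and matches the paper's approach: the paper only offers a one-line sketch (``$X_1\eqd F^{-1}(U_1)$, monotonicity of $F^{-1}$, and a coupling argument''), and your randomized quantile (distributional) transform $U_i=F(X_i^-)+W_i(F(X_i)-F(X_i^-))$ is the standard explicit realization of exactly that coupling, with the order-statistics identity following from monotonicity of $F^{-1}$ as you say. The only point worth spelling out in a full writeup is the a.s.\ identity $F^{-1}(F(X_i))=X_i$ at non-atoms, which holds because the exceptional set is a countable union of flat stretches of $F$, each of $P$-measure zero.
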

\begin{proof}[Proof sketch] This result can be proven by recalling that $X_1$ has the same law as $F^{-1}(U_1)$, using that $F^{-1}$ is monotone non-increasing and applying a coupling argument; we omit the details.\end{proof}

Given this construction, we define the conditional mean and variance parameters associated with the random variables $F^{-1}(U_i)$.

\begin{definition}[$(a,b)$-trimmed population parameters]\label{def:trimmedpop} Let $F^{-1}$ be the quantile transform of $F$. Given $0<a<b<1$, we define $P^{(a,b)}$ as the distribution of $F^{-1}(U^{(a,b)})$, where $U^{(a,b)}$ is uniform over $(a,b)$. The $(a,b)$-trimmed population mean and variance are (respectively) the mean and variance of this distribution: \[\mu^{(a,b)}:=\Ex{F^{-1}(U^{(a,b)})} = \frac{1}{b-a}\int_{a}^bF^{-1}(u)\,du \] and \[\left(\sigma^{(a,b)}\right)^2:= \Var{F^{-1}(U^{(a,b)})}= \frac{1}{b-a}\int_{a}^b(F^{-1}(u)-\mu(a,b))^2\,du.\]
We also define the $(a,b)$-trimmed width as
$\Delta^{(a,b)}:=F^{-1}(b)-F^{-1}(a)$, noting that $P^{(a,b)}$ is supported on an interval of size $\Delta^{(a,b)}$. Finally, we set
\[\nu_p^{(a,b)}:=\Ex{\left|F^{-1}(U^{(a,b)})-\mu^{(a,b)}\right|^p}^{\frac{1}{p}}\,\,(p\geq 1).\]

\end{definition}

The next result is an easy consequence of the above discussion. 

\begin{corollary}\label{cor:conditionalsample}In the setting of Proposition \ref{prop:richer}, it holds that, conditionally on $U_{(k_1)} = a<U_{(n-k_2+1)}=b$ (with $0<a<b<1$), the random variables $X_{(i)}$ with $k_1+1\leq i\leq n-k_2$ are (up to their ordering) i.i.d.\ with common law $P^{(a,b)}$. \end{corollary}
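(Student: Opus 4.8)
The plan is to reduce the statement to the classical structure of order statistics of i.i.d.\ uniform random variables and then push forward through the quantile transform. By Proposition~\ref{prop:richer} I may assume $X_i=F^{-1}(U_i)$ and $X_{(i)}=F^{-1}(U_{(i)})$ with $U_1,\dots,U_n$ i.i.d.\ $\Unif(0,1)$. So it suffices to prove the corresponding uniform statement: conditionally on $U_{(k_1)}=a$ and $U_{(n-k_2+1)}=b$, the block $U_{(k_1+1)},\dots,U_{(n-k_2)}$ is, up to ordering, an i.i.d.\ sample from $\Unif(a,b)$. Granting this, I would apply the deterministic measurable map $F^{-1}$ coordinatewise: the image of an i.i.d.\ $\Unif(a,b)$ multiset under $F^{-1}$ is an i.i.d.\ multiset with common law $P^{(a,b)}$ (Definition~\ref{def:trimmedpop}), and since $F^{-1}$ is nondecreasing the resulting values $X_{(k_1+1)}\le\dots\le X_{(n-k_2)}$ are exactly the order statistics of that i.i.d.\ sample. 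The phrase ``up to ordering'' is there precisely because $F^{-1}$ may introduce ties; as a multiset the claim is as stated.

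For the uniform statement, first I would recall that $(U_{(1)},\dots,U_{(n)})$ has joint density $n!\,\mathbf{1}\{0<u_1<\dots<u_n<1\}$, and integrate out the coordinates $u_1<\dots<u_{k_1-1}$ lying below $u_{k_1}$ and the coordinates $u_{n-k_2+2}<\dots<u_n$ lying above $u_{n-k_2+1}$ (with the conventions $U_{(0)}:=0$, $U_{(n+1)}:=1$ covering the boundary cases $k_1=0$ or $k_2=0$). Writing $m:=n-k_1-k_2$, this yields that the joint density of $(U_{(k_1)},U_{(k_1+1)},\dots,U_{(n-k_2+1)})$ is proportional to
\[
u_{k_1}^{k_1-1}\,(1-u_{n-k_2+1})^{k_2-1}\,\mathbf{1}\{0<u_{k_1}<u_{k_1+1}<\dots<u_{n-k_2+1}<1\}.
\]
The dependence on the $m$ middle coordinates $u_{k_1+1},\dots,u_{n-k_2}$ enters only through the chain of inequalities, so conditionally on $U_{(k_1)}=a$ and $U_{(n-k_2+1)}=b$ the vector $(U_{(k_1+1)},\dots,U_{(n-k_2)})$ has conditional density $\frac{m!}{(b-a)^m}\,\mathbf{1}\{a<u_{k_1+1}<\dots<u_{n-k_2}<b\}$. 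This is exactly the law of the order statistics of $m$ i.i.d.\ $\Unif(a,b)$ random variables, hence as an unordered multiset $\{U_{(i)}:k_1+1\le i\le n-k_2\}$ is i.i.d.\ $\Unif(a,b)$, completing the reduction.

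I expect the only genuine subtlety to be the rigorous interpretation of the measure-zero conditioning event $\{U_{(k_1)}=a,\ U_{(n-k_2+1)}=b\}$: the conclusion should be phrased via regular conditional distributions, and the density factorization above is what makes it precise---it is also, in effect, the Markov property of order statistics, showing that given the two endpoints the middle block is conditionally independent of the outer order statistics, so conditioning on only the two endpoints loses nothing. The remaining steps (the elementary simplex-volume integrations, recognizing the conditional density, and transporting through $F^{-1}$) are routine.
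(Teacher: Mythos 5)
Your proposal is correct and follows essentially the same route as the paper: reduce to the uniform case via the quantile coupling of Proposition \ref{prop:richer}, observe that conditionally on the two endpoint order statistics the middle block of uniforms is i.i.d.\ $\Unif(a,b)$, and push forward through $F^{-1}$. The paper simply asserts the uniform order-statistics fact, whereas you supply the standard density/simplex computation proving it; this is a matter of detail, not of approach.
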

\begin{proof}Under this conditioning, the random variables $U_{(i)}$ with $k_1+1\leq i\leq n-k_2$ are (up to their ordering) i.i.d.\ uniform over $(a,b)$. Since $X_{(i)}=F^{-1}(U_{(i)})$ for each $i\in[n]$, the result follows.\end{proof}

\subsection{The case of contaminated data} In Section \ref{sec:proof:minimaxcontaminated}, we apply the trimmed mean to adversarially contaminated data. In this setting, we introduce the notation specific to this case and explain why asymmetrically trimmed means behave well under this sort of contamination. 

We start with a definition.

\begin{definition}[Contamination and trimmed mean] Random variables $X^\epsilon_1,\dots,X^\epsilon_n$ are an $\epsilon$-contamination of the i.i.d.\ sample $X_1,\dots,X_n$ if
\[\#\{i\in[n]\,:\, X_i^\epsilon\neq X_i\}\leq \epsilon n.\]
Letting $X^\epsilon_{(1)}\leq X^\epsilon_{(2)}\leq \dots \leq X^\epsilon_{(n)}$ denote the order statistics of the random sample, and given $(k_1,k_2)\in\N^2$ with $k_1+k_2<n$, we define the $\epsilon$-contaminated $(k_1,k_2)$-trimmed mean as follows: \[\overline{X^\epsilon}_{n,k_1,k_2}:=\frac{1}{n-k_1-k_2}\sum_{i=k_1+1}^{n-k_2}X^\epsilon_{(i)}.\]\end{definition}

Unlike with $\overline{X}_{n,k_1,k_2}$, there is no way to represent $\overline{X^\epsilon}_{n,k_1,k_2}$ as an average of conditionally i.i.d.\ random variables. In fact, the contaminated trimmed mean can be quite bad when $\min\{k_1,k_2\}<\lfloor\epsilon n\rfloor$, as a suitable contamination can drive the value of $|\overline{X^\epsilon}_{n,k_1,k_2}|$ to $+ \infty$. On the other hand, if $\min\{k_1,k_2\}\geq \lfloor\epsilon n\rfloor$, one can relate $\overline{X^\epsilon}_{n,k_1,k_2}$ to trimmed mean over the clean sample $X_1,\dots ,X_n$. 

\begin{proposition}\label{prop:effectcontamination}Assume $k_1,k_2\in\N$ satisfy $\min\{k_1,k_2\}\geq \lfloor \epsilon n\rfloor$ and $k_1+k_2<n$ Then:\[\overline{X}_{n,k_1-\lfloor \epsilon n\rfloor,k_2+\lfloor \epsilon n\rfloor}\leq \overline{X^\epsilon}_{n,k_1,k_2}\leq \overline{X}_{n,k_1+\lfloor \epsilon n\rfloor,k_2-\lfloor \epsilon n\rfloor}.\]
\end{proposition}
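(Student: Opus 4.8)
The plan is to compare the two samples order-statistic by order-statistic. Let $S:=\{i\in[n]:X_i^\epsilon\neq X_i\}$, so $\#S\le \lfloor\epsilon n\rfloor=:m$. Write $X^\epsilon_{(1)}\le\dots\le X^\epsilon_{(n)}$ for the contaminated order statistics and $X_{(1)}\le\dots\le X_{(n)}$ for the clean ones. The key elementary lemma is a ``sandwiching'' of order statistics: if two samples of size $n$ differ in at most $m$ coordinates, then for every index $j$ with $m+1\le j\le n-m$ one has
\[
X_{(j-m)}\ \le\ X^\epsilon_{(j)}\ \le\ X_{(j+m)}.
\]
This is standard and follows, e.g., from the fact that $X^\epsilon_{(j)}\ge X_{(j-m)}$ because at least $j-m$ of the clean values (namely those among the $j$ smallest contaminated indices that were not altered) are $\le X^\epsilon_{(j)}$; the upper bound is symmetric, or obtained by applying the lower bound to the negated samples.

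First I would establish this order-statistic sandwich (citing it as elementary or giving the two-line counting argument). Then I would average it. For the upper bound on $\overline{X^\epsilon}_{n,k_1,k_2}$, sum the inequality $X^\epsilon_{(i)}\le X_{(i+m)}$ over $i=k_1+1,\dots,n-k_2$; this is legitimate because for such $i$ we have $i\ge k_1+1\ge m+1$ (using $k_1\ge\lfloor\epsilon n\rfloor=m$) and $i\le n-k_2\le n-m$ (using $k_2\ge m$), so the sandwich applies to every term. Dividing by $n-k_1-k_2$ and re-indexing $i\mapsto i+m$ gives
\[
\overline{X^\epsilon}_{n,k_1,k_2}\ \le\ \frac{1}{n-k_1-k_2}\sum_{i=k_1+m+1}^{n-k_2+m}X_{(i)}\ =\ \overline{X}_{n,k_1+m,\,k_2-m},
\]
where the last equality is just Definition \ref{def:TM} applied with trimming parameters $(k_1+m,k_2-m)$, noting these are nonnegative and sum to $k_1+k_2<n$. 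The lower bound is entirely symmetric: sum $X^\epsilon_{(i)}\ge X_{(i-m)}$ over the same range and re-index to obtain $\overline{X}_{n,k_1-m,\,k_2+m}\le\overline{X^\epsilon}_{n,k_1,k_2}$.

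There is no real obstacle here; the only thing to be careful about is the bookkeeping of indices, namely checking that the sandwich inequality is valid for \emph{every} summand (which is exactly what the hypothesis $\min\{k_1,k_2\}\ge\lfloor\epsilon n\rfloor$ guarantees) and that after re-indexing the sum matches the definition of a trimmed mean with the shifted parameters. I would present the upper-bound chain in detail and dispatch the lower bound by the symmetry $X_i\mapsto -X_i$, which swaps the roles of $k_1$ and $k_2$ and flips both inequalities.
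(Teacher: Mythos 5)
Your proof is correct and follows essentially the same route as the paper's: the identical order-statistic sandwich $X_{(i-\lfloor\epsilon n\rfloor)}\le X^\epsilon_{(i)}\le X_{(i+\lfloor\epsilon n\rfloor)}$ established by the same counting argument, then summed and re-indexed to match the definition of the asymmetrically trimmed mean. The only difference is that you spell out the averaging and index bookkeeping, which the paper leaves implicit.
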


\begin{proof}It suffices to prove the following
\begin{equation}\label{eq:claimorderstats}\mbox{{\bf Claim:} }\forall i\in \{k_1+1,\dots,n-k_2\}\,:\, X_{(i-\lfloor \epsilon n\rfloor)}\leq X^\epsilon_{(i)}\leq X_{(i+\lfloor \epsilon n\rfloor)}.\end{equation}
To prove this, notice that
\[X_{(i)}^\epsilon=  \inf\{t\in\R\,:\,\#\{j\in[n]\,:\, X^\epsilon_{j}\leq t\}\geq i\}.\]
Now, if we take $t=X_{(i+\lfloor \epsilon n\rfloor)}$ above, we see that $X_{j}\leq t$ for at least $i+\lfloor \epsilon n\rfloor$ indices $j\in[n]$. Since $X_{j}=X^\epsilon_{j}$ for all but at most $\lfloor \epsilon n\rfloor$ indices $j$, we conclude:
\[\#\{j\in[n]\,:\, X^\epsilon_{j}\leq X_{(i+\lfloor \epsilon n\rfloor)}\}\geq \#\{j\in[n]\,:\, X_{j}\leq X_{(i+\lfloor \epsilon n\rfloor)}\} - \lfloor \epsilon n\rfloor\geq i.\]
Therefore, $X^\epsilon_{(i)}\leq X_{(i+\lfloor \epsilon n\rfloor)}$. This proves that the upper bound part of the claim and the lower bound part is similar.\end{proof}

\section{Trimmed population parameters and related quantities} \label{sec:boundstrimmed}

In the previous section, we showed that the trimmed mean is an average of conditionally i.i.d.\ random variables. We also defined certain parameters $\mu^{(a,b)}$, $\sigma^{(a,b)}$ and $\Delta^{(a,b)}$ of the conditional distribution of the i.i.d.\ sum. The goal of this section is to prove results about these and other parameters that appear in the analysis of the trimmed mean.

In what follows, $X_1,\dots,X_n$ is an i.i.d.\ random sample with c.d.f. $F$ and a well-defined mean $\mu$. As in \S \ref{sub:probnotation}, we write $\nu_p^p:=\Ex{|X_1-\mu|^p}$ (for $p\geq 1$) and $\sigma^2=\nu_2^2$ for the variance. We also recall the definition of $\rho_{F,p}$ from (\ref{eq:rhoFp}). Following Proposition \ref{prop:richer}, we assume without loss that there exist random variables $\{U_i\}_{i=1}^n$ that are uniform over $(0,1)$ with $F^{-1}(U_{i})=X_i$.

\subsection{Bias of the trimmed population mean} We start with the following result. 
\begin{proposition}\label{prop:controltrimmedmeanpop}Let $0<a<b<1$ and $\xi=\xi(a,b):=1-(b-a)$. Let $p>1$ be such that $\nu_p<+\infty$. Then
\[\left|\mu-\mu^{(a,b)}\right|\leq \frac{\nu_p\,\rho_{F,p}(\xi)\,\xi^{\frac{p-1}{p}}}{1-\xi}.\]\end{proposition}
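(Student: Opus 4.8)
The plan is to write the bias $\mu-\mu^{(a,b)}$ as a (normalized) integral of $F^{-1}(u)-\mu$ over the ``removed'' set $[0,1]\setminus(a,b)$, whose Lebesgue measure is exactly $\xi=1-(b-a)$, and then to bound that integral by Hölder's inequality combined with the definition of $\rho_{F,p}$ from Proposition \ref{prop:rhop}.

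First I would use the quantile representation $X_1=F^{-1}(U_1)$ with $U_1\sim\Unif(0,1)$ (Proposition \ref{prop:richer}), so that $\mu=\Ex{X_1}=\int_0^1 F^{-1}(u)\,du$ and, since $b-a=1-\xi$, $\mu^{(a,b)}=(1-\xi)^{-1}\int_a^b F^{-1}(u)\,du$; integrability of $F^{-1}$ is not an issue, as $\nu_p<+\infty$ with $p\geq 1$ forces $\nu_1<+\infty$. From $\int_0^1(F^{-1}(u)-\mu)\,du=0$ we get $\int_a^b(F^{-1}(u)-\mu)\,du=-\int_{[0,1]\setminus(a,b)}(F^{-1}(u)-\mu)\,du$, and hence
\[\mu-\mu^{(a,b)}=-\frac{1}{1-\xi}\int_a^b(F^{-1}(u)-\mu)\,du=\frac{1}{1-\xi}\int_{[0,1]\setminus(a,b)}(F^{-1}(u)-\mu)\,du.\]
Writing $Z:={\bf 1}_{\{U_1\notin(a,b)\}}$, which satisfies $0\leq Z\leq 1$ and $\Ex{Z}=1-(b-a)=\xi$, this yields $|\mu-\mu^{(a,b)}|\leq (1-\xi)^{-1}\,\Ex{|X_1-\mu|\,Z}$.

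Then I would bound $\Ex{|X_1-\mu|\,Z}$. Since $Z$ is an indicator, $|X_1-\mu|\,Z=(|X_1-\mu|^pZ)^{1/p}\,Z^{(p-1)/p}$, so Hölder's inequality with exponents $p$ and $p/(p-1)$ gives $\Ex{|X_1-\mu|\,Z}\leq \Ex{|X_1-\mu|^pZ}^{1/p}\,\Ex{Z}^{(p-1)/p}=\Ex{|X_1-\mu|^pZ}^{1/p}\,\xi^{(p-1)/p}$. By the definition of $\rho_{F,p}$ (applicable since $\Ex{Z}=\xi\leq\xi$), $\Ex{|X_1-\mu|^pZ}^{1/p}\leq \nu_p\,\rho_{F,p}(\xi)$. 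Combining the last three displayed bounds gives $|\mu-\mu^{(a,b)}|\leq \nu_p\,\rho_{F,p}(\xi)\,\xi^{(p-1)/p}/(1-\xi)$, which is the claim.

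There is no genuinely hard step here; the only point requiring care is the Hölder bookkeeping that produces the exponent $\xi^{(p-1)/p}$ and matches the normalization built into $\rho_{F,p}$ — specifically, splitting the indicator as $Z=Z^{1/p}Z^{(p-1)/p}$ so that the $\rho_{F,p}$-term appears with its natural $1/p$ power while the leftover mass contributes $\Ex{Z}^{(p-1)/p}=\xi^{(p-1)/p}$.
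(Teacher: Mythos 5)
Your proof is correct and follows essentially the same route as the paper's: write the bias as the normalized integral of $F^{-1}(u)-\mu$ over the removed set of measure $\xi$, apply H\"{o}lder with exponents $p$ and $p/(p-1)$, and bound the resulting $p$-th-moment factor via the definition of $\rho_{F,p}$ with $Z$ the indicator of the removed set. The only difference is cosmetic (you phrase H\"{o}lder in expectation form with the indicator split $Z=Z^{1/p}Z^{(p-1)/p}$, the paper in integral form), so nothing further is needed.
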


\begin{proof}Notice that 
\[\mu^{(a,b)} - \mu = \frac{\int_{a}^b\,(F^{-1}(u) - \mu)\,du}{1-\xi} = -\frac{\int_{[0,1]\backslash [a,b]}\,(F^{-1}(u) - \mu)\,du}{1-\xi}\]
where in the second identity we used that $\int_0^1(F^{-1}(u)-\mu)\,du = \Ex{F^{-1}(U)-\mu}=0$.

The set $[0,1]\backslash [a,b]$ has Lebesgue measure $\xi$. H\"{o}lder's inequality gives:
\[\left|\int_{[0,1]\backslash [a,b]}\,(F^{-1}(u) - \mu)\,du\right|\leq \xi^{\frac{p-1}{p}}\,\left(\int_{[0,1]\backslash [a,b]}\,|F^{-1}(u) - \mu|^p\,du\right)^{\frac{1}{p}}.\]
Moreover,
\[\int_{[0,1]\backslash [a,b]}\,|F^{-1}(u) - \mu|^p\,du = \Ex{|X_1-\mu|^p\,Z}\]
with $Z:={\bf 1}_{[0,1]\backslash[a,b]}(U)$ satisfies $0\leq Z\leq 1$ and $\Ex{Z}=\xi,$ so that $\Ex{|X_1-\mu|^p\,Z}\leq \nu_p^p\,\rho_{F,p}(\xi)$ by the definition of $\rho_{F,p}$ (cf. Equation (\ref{eq:rhoFp})).\end{proof}



\subsection{Bounds on the trimmed population variance} Our next step is to control $\left(\sigma^{(a,b)}\right)^2$.
\begin{proposition}\label{lem:varboundmoments}The $(a,b)$-trimmed population variance satisfies
\[\sigma^{(a,b)}\leq \frac{\sigma}{1-\xi},\]
and for any $1<q\leq 2$,
\[\sigma^{(a,b)}\leq \frac{\sqrt{2}\,\nu_p^{\frac{q}{2}}\,\left(\Delta^{(a,b)}\right)^{1-\frac{q}{2}}}{1-\xi}.\]
If $\sigma<+\infty$, $\xi<1/2$ and $\rho_{F,2}(\xi)<1/3$,
\[\sigma \leq \sqrt{\frac{1-\xi}{1-\left(\frac{2-\xi}{1-\xi}\right)\,\rho^2_{F,2}(\xi)}}\,\sigma^{(a,b)}.\]

\end{proposition}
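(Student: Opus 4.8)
The plan is to prove the three inequalities separately, in each case working with the integral representations of $\mu^{(a,b)}$ and $\sigma^{(a,b)}$ from Definition~\ref{def:trimmedpop}; throughout abbreviate $\rho:=\rho_{F,2}(\xi)$ and recall that $b-a=1-\xi$, so $\left(\sigma^{(a,b)}\right)^2=\frac{1}{1-\xi}\int_a^b\bigl(F^{-1}(u)-\mu^{(a,b)}\bigr)^2\,du$. For the first bound I would use that the variance is the minimal mean-square deviation: $\left(\sigma^{(a,b)}\right)^2\le \frac{1}{1-\xi}\int_a^b\bigl(F^{-1}(u)-\mu\bigr)^2\,du\le \frac{1}{1-\xi}\int_0^1\bigl(F^{-1}(u)-\mu\bigr)^2\,du=\frac{\sigma^2}{1-\xi}$, and since $1-\xi\le 1$ one has $\sqrt{1-\xi}\ge 1-\xi$, so taking square roots gives $\sigma^{(a,b)}\le\sigma/(1-\xi)$.

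For the $\Delta^{(a,b)}$-dependent bound, I would pass to the i.i.d.\ form $\left(\sigma^{(a,b)}\right)^2=\frac{1}{2(1-\xi)^2}\int_a^b\!\int_a^b\bigl(F^{-1}(u)-F^{-1}(v)\bigr)^2\,du\,dv$ and insert a factor of the width: since $F^{-1}(a)\le F^{-1}(u),F^{-1}(v)\le F^{-1}(b)$ and $2-q\ge 0$, $\bigl(F^{-1}(u)-F^{-1}(v)\bigr)^2\le\bigl(\Delta^{(a,b)}\bigr)^{2-q}\bigl|F^{-1}(u)-F^{-1}(v)\bigr|^q$. Then $\bigl|F^{-1}(u)-F^{-1}(v)\bigr|^q\le 2^{q-1}\bigl(|F^{-1}(u)-\mu|^q+|F^{-1}(v)-\mu|^q\bigr)$ by convexity of $t\mapsto t^q$; integrating and enlarging $\int_a^b$ to $\int_0^1$ (so that $\int_a^b|F^{-1}(u)-\mu|^q\,du\le\nu_q^q$) gives $\left(\sigma^{(a,b)}\right)^2\le 2^{q-1}\bigl(\Delta^{(a,b)}\bigr)^{2-q}\nu_q^q/(1-\xi)$, and $2^{q-1}\le 2$ together with $\sqrt{1-\xi}\ge 1-\xi$ yields the stated inequality (here the moment that enters is $\nu_q$).

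The reverse bound is the main point. I would split $\sigma^2=\int_a^b\bigl(F^{-1}(u)-\mu\bigr)^2\,du+\int_{[0,1]\setminus[a,b]}\bigl(F^{-1}(u)-\mu\bigr)^2\,du$. The second term is $\Ex{|X_1-\mu|^2\,Z}$ with $Z=\mathbf{1}_{[0,1]\setminus[a,b]}(U)$ and $\Ex{Z}=\xi$, hence at most $\sigma^2\rho^2$ by the definition of $\rho_{F,2}$, so $\int_a^b\bigl(F^{-1}(u)-\mu\bigr)^2\,du\ge\sigma^2(1-\rho^2)$. On the other hand the Pythagorean identity gives $\int_a^b\bigl(F^{-1}(u)-\mu\bigr)^2\,du=(1-\xi)\bigl[\left(\sigma^{(a,b)}\right)^2+\bigl(\mu^{(a,b)}-\mu\bigr)^2\bigr]$, and Proposition~\ref{prop:controltrimmedmeanpop} with $p=2$ bounds $\bigl(\mu^{(a,b)}-\mu\bigr)^2\le\sigma^2\rho^2\xi/(1-\xi)^2\le\sigma^2\rho^2/(1-\xi)^2$. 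Combining the three facts, $(1-\xi)\left(\sigma^{(a,b)}\right)^2\ge\sigma^2(1-\rho^2)-\sigma^2\rho^2/(1-\xi)=\sigma^2\bigl(1-\tfrac{2-\xi}{1-\xi}\rho^2\bigr)$; the hypotheses $\xi<1/2$ and $\rho<1/3$ keep the factor $1-\tfrac{2-\xi}{1-\xi}\rho^2$ strictly positive, and rearranging and taking square roots yields the claim.

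The only genuine difficulty is bookkeeping: tracking the several places where $\rho_{F,2}(\xi)$ and powers of $(1-\xi)$ enter — the tail-mass estimate, the bias term imported from Proposition~\ref{prop:controltrimmedmeanpop}, and the normalization — and checking that $\xi<1/2$ together with $\rho_{F,2}(\xi)<1/3$ is precisely what guarantees the denominator in the last display stays positive. No ingredient beyond H\"older/Jensen-type inequalities and the already-established bias bound is required.
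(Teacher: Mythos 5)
Your proof is correct and follows essentially the same route as the paper's: the same double-integral/bias--variance representations, the same enlargement of $[a,b]$ to $[0,1]$, and the same combination of the tail functional $\rho_{F,2}$ with Proposition \ref{prop:controltrimmedmeanpop} for the reverse inequality, arriving at the identical factor $1-\frac{2-\xi}{1-\xi}\rho_{F,2}^2(\xi)$. The only (harmless) differences are that you obtain the slightly sharper intermediate constants $\sigma/\sqrt{1-\xi}$ and $1/\sqrt{1-\xi}$ before weakening to the stated $1/(1-\xi)$, and you correctly note that the $\nu_p$ in the second display should read $\nu_q$.
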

\begin{proof}The first statement follows from a simple chain of inequalities:
\begin{eqnarray*}\left(\sigma^{(a,b)}\right)^2 &=& \frac{1}{2\,(1-\xi)^2}\int_{a}^b\int_{a}^{b}\,(F^{-1}(u) - F^{-1}(v))^2\,du\,dv \\ \mbox{(integrand is $\geq 0$)}&\leq & \frac{1}{2\,(1-\xi)^2}\int_{0}^1\int_{0}^{1}\,(F^{-1}(u) - F^{-1}(v))^2\,du\,dv\\ &=& \frac{\sigma^2}{(1-\xi)^2}.\end{eqnarray*}
The second statement is similar, as
\[\forall (u,v)\in [a,b]^2\,:\, (F^{-1}(u) - F^{-1}(v))^2\leq (\Delta^{(a,b)})^{2-q}\,|F^{-1}(u) - F^{-1}(v)|^{q},\]
so that:
\[\left(\sigma^{(a,b)}\right)^2\leq \frac{(\Delta^{(a,b)})^{2-q}}{2(1-\xi)^ 2}\int_0^1\int_0^1|F^{-1}(u) - F^{-1}(v)|^{q}\,du\,dv.\]
The integral in the RHS is $\|X_1-X_2\|^q_{L^q}$ which (by convexity) is at most $2^{q}\|X-\mu\|_{L^q}^q = 2^{q}\,\nu_q^q.$ Plugging this back above, and noting that $q\leq 2$, suffices to obtain the desired inequality. 

To prove the second statement in the theorem, we use that
\[\sigma^2 = (1-\xi) \frac{1}{b-a}\int_{a}^b(F^{-1}(u)-\mu)^2\,du + \int_{[0,1]\backslash[a,b]}\,(F^{-1}(u)-\mu)^2\,du.\]
The first integral in the RHS is
\[(\sigma^{(a,b)})^2 + (\mu^{(a,b)}-\mu)^2\leq (\sigma^{(a,b)})^2 + \frac{\rho^2_{F,2}(\xi)\,\sigma^2}{(1-\xi)^2}\]
by Proposition \ref{prop:controltrimmedmeanpop}. We also have
\[ \int_{[0,1]\backslash[a,b]}\,(F^{-1}(u)-\mu)^2\,du\leq \rho^2_{F,2}(\xi)\,\sigma^2.\]
Therefore, 
\[\sigma^2 \leq (1-\xi)(\sigma^{(a,b)})^2 + \left(\frac{2-\xi}{1-\xi}\right)\,\rho^2_{F,2}(\xi)\sigma^2.\]
If $\xi<1/2$ and $\rho_{F,2}(\xi)<1/3$, then 
\[\left(\frac{2-\xi}{1-\xi}\right)\,\rho^2_{F,2}(\xi)<1\]
and
\[\sigma^2 \leq \frac{1-\xi}{1-\left(\frac{2-\xi}{1-\xi}\right)\,\rho^2_{F,2}(\xi)}\,(\sigma^{(a,b)})^2.\]
\end{proof}

\subsection{Bounds on trimmed population centered moments.} We will also need a simple proposition relating the trimmed population quantities $\nu_p^{(a,b)}$ introduced in Definition \ref{def:trimmedpop} to $\nu_p$. 

\begin{proposition}\label{prop:nuptrim}Let $0<a<b<1$ and $\xi = 1-(b-a)$. We have the bound:
\[\nu_p^{(a,b)}\leq \left(\frac{1}{(1-\xi)^{\frac{1}{p}}}+\frac{\xi^{\frac{p-1}{p}}}{(1-\xi)}\right)\nu_p.\]
\end{proposition}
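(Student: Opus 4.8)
The plan is to mimic the argument used for Proposition \ref{prop:controltrimmedmeanpop}, but now controlling the full $L^p$-norm of the centered trimmed variable rather than just the bias. First I would express $\nu_p^{(a,b)}$ in terms of an integral over $[a,b]$: by definition,
\[
\left(\nu_p^{(a,b)}\right)^p = \frac{1}{1-\xi}\int_a^b \left|F^{-1}(u)-\mu^{(a,b)}\right|^p\,du.
\]
The natural move is to replace the ``wrong'' center $\mu^{(a,b)}$ by the original mean $\mu$ at the cost of a triangle inequality in $L^p$. Since the measure $\frac{1}{1-\xi}\,du$ restricted to $[a,b]$ is a probability measure, Minkowski's inequality gives
\[
\left(\nu_p^{(a,b)}\right) \le \left(\frac{1}{1-\xi}\int_a^b\left|F^{-1}(u)-\mu\right|^p\,du\right)^{1/p} + \left|\mu-\mu^{(a,b)}\right|.
\]

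For the first term, I would simply enlarge the domain of integration from $[a,b]$ to $[0,1]$ (the integrand is nonnegative), obtaining
\[
\frac{1}{1-\xi}\int_a^b\left|F^{-1}(u)-\mu\right|^p\,du \le \frac{1}{1-\xi}\int_0^1\left|F^{-1}(u)-\mu\right|^p\,du = \frac{\nu_p^p}{1-\xi},
\]
so that term is bounded by $\nu_p/(1-\xi)^{1/p}$. For the second term I would invoke Proposition \ref{prop:controltrimmedmeanpop} directly, which gives $|\mu-\mu^{(a,b)}|\le \nu_p\,\rho_{F,p}(\xi)\,\xi^{(p-1)/p}/(1-\xi)$, and then bound $\rho_{F,p}(\xi)\le 1$ (from Proposition \ref{prop:rhop}) to get the cruder bound $\nu_p\,\xi^{(p-1)/p}/(1-\xi)$. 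Adding the two pieces yields exactly
\[
\nu_p^{(a,b)} \le \left(\frac{1}{(1-\xi)^{1/p}} + \frac{\xi^{(p-1)/p}}{1-\xi}\right)\nu_p,
\]
as claimed.

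This proof is essentially routine; there is no real obstacle. The only point requiring a little care is making sure Minkowski's inequality is applied with respect to a \emph{probability} measure (hence the $\frac{1}{1-\xi}$ normalization appearing inside the $L^p$ norms rather than outside), so that the triangle inequality absorbs the constant $\mu-\mu^{(a,b)}$ cleanly as a single additive term. A secondary minor point is that one should note $p>1$ so that $\rho_{F,p}$ is well-defined and $\nu_p<+\infty$ is assumed — both are implicit in the statement. Everything else is a direct substitution of the two already-proven bounds.
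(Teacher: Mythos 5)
Your proposal is correct and follows essentially the same route as the paper: the paper's first displayed inequality is exactly the Minkowski/triangle-inequality step with respect to the normalized measure on $[a,b]$, followed by enlarging the integration domain to $[0,1]$ and invoking Proposition \ref{prop:controltrimmedmeanpop} with $\rho_{F,p}\leq 1$. Nothing is missing.
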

\begin{proof}Notice that 
\[\nu_p^{(a,b)} = \left(\frac{1}{b-a}\int_{a}^b|F^{-1}(u) - \mu^{(a,b)}|^p\right)^{\frac{1}{p}}\leq |\mu-\mu^{(a,b)}| + \left(\frac{1}{1-\xi}\int_{a}^b|F^{-1}(u) - \mu|^p\right)^{\frac{1}{p}}.\]
Extending the range of the integral in the RHS to $[0,1]$ can only increase its value. Therefore, 
\[\nu_p^{(a,b)} \leq |\mu-\mu^{(a,b)}|+\frac{\nu_p}{(1-\xi)^{\frac{1}{p}}}.\] 
We finish the proof by applying Proposition \ref{prop:controltrimmedmeanpop}, bounding $\rho_{F,p}(\xi)\leq 1$ and performing some simple calculations.  

\end{proof}

\subsection{Order statistics of uniform random variables.} When applying the above bounds, we will take $a=U_{(k_1)}$ and $b=U_{(n-k_2+1)}$. The corresponding value of $\xi$ is the random variable $\Xi$ studied in the next Proposition. 
\begin{proposition}\label{prop:xi}Let $\Xi:= 1 - U_{(n-k_2+1)} + U_{(k_1)}$. Then for any $t>0$, 
\[\Pr{\Xi> \frac{(\sqrt{k_1+k_2-1}+\sqrt{t})^2}{n}}\leq e^{-t}.\]\end{proposition}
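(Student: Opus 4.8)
The plan is to recognize $\Xi = 1 - U_{(n-k_2+1)} + U_{(k_1)}$ as the total mass that the uniform order statistics assign to the two extreme tails $[0, U_{(k_1)})$ and $(U_{(n-k_2+1)}, 1]$, and to reduce the claimed bound to a concentration statement about a single Beta random variable. Indeed, write $V_{(1)} \le \dots \le V_{(n+1)}$ for the spacings representation: the gaps $S_j := U_{(j)} - U_{(j-1)}$ (with $U_{(0)}:=0$, $U_{(n+1)}:=1$) are an exchangeable Dirichlet vector, and $\Xi = \sum_{j=1}^{k_1} S_j + \sum_{j=n-k_2+2}^{n+1} S_j$ is a sum of $k_1 + k_2$ of these $n+1$ gaps. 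Hence $\Xi$ has the $\Beta(k_1+k_2, n+1-k_1-k_2)$ distribution — equivalently, $\Xi \eqd U_{(k_1+k_2)}$, the $(k_1+k_2)$-th order statistic of a fresh i.i.d.\ uniform sample of size $n$. (One can see the $\Beta(k_1+k_2, \cdot)$ claim most cleanly by noting $U_{(k_1)}$ and $1-U_{(n-k_2+1)}$ together carve out $k_1+k_2$ consecutive spacings after relabeling, or by a direct change of variables; either way the number of "left" gaps is $k_1+k_2$.)

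Next I would convert the tail bound on $\Xi$ into a tail bound on a Binomial via the standard order-statistic/Binomial duality: for $s \in (0,1)$,
\[
\Pr{\Xi > s} = \Pr{U_{(k_1+k_2)} > s} = \Pr{\Bin(n,s) \le k_1+k_2-1} = \Pr{\Bin(n,s) < k_1+k_2}.
\]
So with $m := k_1 + k_2 - 1$ and the choice $s = (\sqrt{m} + \sqrt{t})^2/n$, the goal becomes showing
\[
\Pr{\Bin(n,s) \le m} \le e^{-t}.
\]
Writing $B \sim \Bin(n,s)$ with $\Ex{B} = ns = (\sqrt{m}+\sqrt{t})^2 = m + 2\sqrt{mt} + t$, we want a lower-tail bound for $B$ at level $m = \Ex{B} - (2\sqrt{mt}+t)$. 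This is exactly the regime handled by the multiplicative Chernoff lower-tail inequality for binomials, $\Pr{B \le (1-\delta)\Ex{B}} \le e^{-\delta^2 \Ex{B}/2}$. Here $\delta = (2\sqrt{mt}+t)/\Ex{B}$, and a short computation gives $\delta^2 \Ex{B}/2 = (2\sqrt{mt}+t)^2/(2(m+2\sqrt{mt}+t))$; since $(2\sqrt{mt}+t)^2 = 4mt + 4t\sqrt{mt} + t^2 \ge 2t(m + 2\sqrt{mt} + t)$ whenever $t \ge$ something mild — actually one checks $4mt+4t^{3/2}m^{1/2}+t^2 \ge 2t m + 4t\sqrt{mt}+2t^2$ reduces to $2mt \ge t^2$, i.e.\ $t \le 2m$ — the exponent is at least $t$ in that range, and for $t > 2m$ the event forces $B$ far below its mean and a cruder bound (e.g.\ $\Pr{B \le m} \le \Pr{B = 0} \cdot (\text{poly}) $ or a direct Chernoff with the $\log$) closes the gap. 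Alternatively, and more cleanly, I would invoke the sharp Poisson-type lower tail $\Pr{B \le a} \le e^{-ns} (ens/a)^a$ for $a \le ns$, plug in $a = m$, $ns = (\sqrt m + \sqrt t)^2$, and verify $ns - m - m\log(ns/m) \ge t$ by the elementary inequality $u - 1 - \log u \ge (\sqrt u - 1)^2 / (\text{something})$ applied to $u = ns/m$; this is the one computation I would actually grind through.

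The main obstacle is the final scalar inequality — showing that the Chernoff exponent, namely $ns - m - m\log(ns/m)$ with $ns = (\sqrt m + \sqrt t)^2$, is at least $t$ for all $t > 0$ and all integers $m \ge 1$ (including the degenerate-looking but harmless case $m = k_1+k_2-1$, where one must be slightly careful if $k_1+k_2 = 1$ so $m=0$, handled separately since then $\Xi \eqd U_{(1)}$ and $\Pr{\Xi > t/n} = (1-t/n)^n \le e^{-t}$). Setting $u = ns/m = (1 + \sqrt{t/m})^2$ and $\tau = t/m$, the claim is $m(u - 1 - \log u) \ge m\tau$, i.e.\ $(1+\sqrt\tau)^2 - 1 - 2\log(1+\sqrt\tau) \ge \tau$, i.e.\ $2\sqrt\tau - 2\log(1+\sqrt\tau) \ge 0$, which holds because $\log(1+y) \le y$ for $y = \sqrt\tau \ge 0$. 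So in fact the inequality is clean and the "obstacle" evaporates — the real content is just the identification $\Xi \eqd U_{(k_1+k_2)}$ and the binomial duality, after which everything follows from $\log(1+y)\le y$. I would present it in that order: (i) spacings argument for the distribution of $\Xi$; (ii) order-statistic/binomial duality; (iii) Chernoff lower tail with the one-line verification above.
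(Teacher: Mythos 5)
Your proof is correct and follows essentially the same route as the paper: identify $\Xi \eqd U_{(k_1+k_2)}$ and reduce the tail bound to a binomial lower-tail estimate via the order-statistic/binomial duality, exactly as in Lemma \ref{lem:orderstats}. The only difference is the last step, where the paper invokes Okamoto's inequality $\Pr{\sqrt{\Bin(n,\lambda)/n}<\sqrt{\lambda}-c}\leq e^{-c^2 n}$ while you rederive the equivalent square-root-scale bound from the Chernoff exponent; your verification $2\sqrt{\tau}-2\log(1+\sqrt{\tau})\geq 0$ is correct, as is your separate treatment of the case $k_1+k_2=1$.
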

\begin{proof}General properties of order statistics of uniforms imply that $\Xi$ has the same law as $U_{(k_1+k_2)}$. The proof finishes via an application of Lemma \ref{lem:orderstats} in the Appendix.\end{proof}

\subsection{The trimmed width.} Finally, we present a bound on $\Delta_{n,k_1,k_2} = \Delta^{(U_{(k_1)},U_{(n-k_2+1)})}$. This is the content of the next proposition. 

\begin{proposition}\label{prop:Deltasize}Assume that $p\geq 1$ and $\nu_p^p=\Ex{|X_1-\mu|^p}<+\infty$. Let $\rho_{F,p}$ be as in Proposition \ref{prop:rhop}. Take $n,k_1,k_2\in\N$ with $k_1+k_2<n$ and set $k_{\min} = \min\{k_1,k_2\}>0$. Then for any $t>0$, 
\[\Pr{\Delta_{n,k_1,k_2}>t\,\nu_p\,\left(\frac{n}{k_{\min}}\right)^{\frac{1}{p}}}\leq \left(\frac{e\,2^p\,\rho_{F,p}\left(\frac{2^p}{t^p}\frac{k_{\min}}{n}\right)^p}{t^p}\right)^{k_{\min}}.\]
\end{proposition}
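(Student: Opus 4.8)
\emph{Proof idea.} The plan is to bound the event $\{\Delta_{n,k_1,k_2} > s\}$, where $s := t\,\nu_p\,(n/k_{\min})^{1/p}$, by a binomial tail and then feed in Proposition \ref{prop:rhop}. Throughout I use the identity $\Delta_{n,k_1,k_2} = X_{(n-k_2+1)} - X_{(k_1)}$ noted just before the statement.

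The first and only nonroutine step is a deterministic one: whenever $\Delta_{n,k_1,k_2} > s$, at least $k_{\min}$ of the points $X_1,\dots,X_n$ satisfy $|X_i-\mu| > s/2$. To see this, note that $X_{(n-k_2+1)} - X_{(k_1)} > s$ forces at least one of $X_{(n-k_2+1)}-\mu$ and $\mu - X_{(k_1)}$ to exceed $s/2$. In the first case, monotonicity of the order statistics makes all of $X_{(n-k_2+1)},\dots,X_{(n)}$ — that is, $k_2\ge k_{\min}$ of the sample points — larger than $\mu+s/2$; in the second case $X_{(1)},\dots,X_{(k_1)}$ — that is, $k_1\ge k_{\min}$ points — are smaller than $\mu - s/2$. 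Either way, at least $k_{\min}$ of the $X_i$ lie at distance greater than $s/2$ from $\mu$.

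Given this, let $q := \Pr{|X_1-\mu| > s/2}$; then $N := \#\{i\in[n] : |X_i-\mu|>s/2\}$ is $\mathrm{Bin}(n,q)$, and a union bound over $k_{\min}$-element subsets of $[n]$ gives $\Pr{\Delta_{n,k_1,k_2}>s}\le \Pr{N\ge k_{\min}}\le \binom{n}{k_{\min}} q^{k_{\min}}\le (enq/k_{\min})^{k_{\min}}$, using $\binom{n}{k_{\min}}\le (en/k_{\min})^{k_{\min}}$. It remains to estimate $q$ via Proposition \ref{prop:rhop} with $r := s/(2\nu_p)$, so that $s/2 = r\nu_p$ and $1/r^p = 2^p k_{\min}/(t^p n)$; the proposition yields $q\le \rho_{F,p}(1/r^p)^p/r^p = \rho_{F,p}(2^p k_{\min}/(t^p n))^p\cdot 2^p k_{\min}/(t^p n)$. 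Substituting into $enq/k_{\min}$ and raising to the $k_{\min}$-th power produces exactly the claimed bound.

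I expect the only real obstacle to be the deterministic step: a naive union bound over the left and right extremes would yield a spurious factor of $2$ in front of the final estimate, and it is precisely the observation that \emph{both} bad configurations force the same count $k_{\min}$ of coordinates to be $(s/2)$-far from $\mu$ that lets the bound come out with the stated constant. A minor point to check along the way is the off-by-one convention for $\Delta_{n,k_1,k_2}$ in Definition \ref{def:TM}; since $X_{(n-k_2+1)}-X_{(k_1)}$ dominates the alternative reading, the bound is unaffected.
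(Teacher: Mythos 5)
Your proposal is correct and follows essentially the same route as the paper's proof: reduce the event $\{\Delta_{n,k_1,k_2}>s\}$ to the event that at least $k_{\min}$ sample points are $(s/2)$-far from $\mu$, apply a union bound with $\binom{n}{k_{\min}}\leq (en/k_{\min})^{k_{\min}}$, and control the single-point probability via Proposition \ref{prop:rhop}. The only difference is cosmetic — you spell out the deterministic step (which the paper asserts as a ``sufficient condition'' without proof) and correctly note that the apparent off-by-one in Definition \ref{def:TM} is harmless since you bound the larger quantity $X_{(n-k_2+1)}-X_{(k_1)}$.
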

\begin{proof}
A sufficient condition for 
\[X_{(n-k_{2}+1)} - X_{(k_1)}\leq t\,\nu_p\,\left(\frac{n}{k_{\min}}\right)^{\frac{1}{p}}\]
is that 
\[\#\left\{i\in[n]\,:\,|X_{i}-\mu|>\frac{t}{2}\,\nu_p\,\left(\frac{n}{k_{\min}}\right)^{\frac{1}{p}}\right\}\leq k_{\min}-1.\]
Therefore,  
\[\Pr{\Delta_{n,k_1,k_2}>t\,\nu_p\,\left(\frac{n}{k_{\min}}\right)^{\frac{1}{p}}}\leq \Pr{\bigcup_{S\subset [n]\,:\,\#S=k_{\min}}\left(\bigcap_{i\in S}\left\{|X_i-\mu|\geq \frac{t\,\nu_p}{2}\,\left(\frac{n}{k_{\min}}\right)^{\frac{1}{p}}\right\}\right)}.\]
Using a union bound, the fact that the $X_i$ are i.i.d., a standard bound for the binomial coefficient, and Proposition \ref{prop:rhop}, we obtain:

\begin{align*}\Pr{\Delta_{n,k_1,k_2}>\frac{t\,\nu_p}{2}\,\left(\frac{n}{k_{\min}}\right)^{\frac{1}{p}}} \leq & \binom{n}{k_{\min}}\,\Pr{|X_1-\mu|\geq \frac{t\,\nu_p}{2}\,\left(\frac{n}{k_{\min}}\right)^{\frac{1}{p}}}^{k_{\min}}\\ \leq & \left(\frac{en}{k_{\min}}\right)^{k_{\min}}\,\left(\frac{2^p\,\rho_{F,p}\left(\frac{2^p}{t^p}\frac{k_{\min}}{n}\right)^p}{t^p\,\left(\frac{n}{k_{\min}}\right)}\right)^{k_{\min}},\end{align*}
from which the result follows. \end{proof}

We note the following corollary for later use. 

\begin{corollary}\label{cor:Delta}For $k=k_1=k_2<n/2$, let $v>0$ be such that
\[\frac{e}{6}\rho_{F,2}\left(\frac{1}{36v^2}\frac{k}{n}\right) \leq v.\]
Then
\[\Pr{\Delta_{n,k} >12v\,\sigma\,\sqrt{\frac{n}{k}}}\leq e^{-k}.\]\end{corollary}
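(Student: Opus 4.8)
The plan is to derive Corollary~\ref{cor:Delta} as a direct specialization of Proposition~\ref{prop:Deltasize} with $p=2$, $k_1=k_2=k$, and a carefully chosen value of the parameter $t$. With $p=2$ and $k_{\min}=k$, Proposition~\ref{prop:Deltasize} states
\[\Pr{\Delta_{n,k}>t\,\sigma\,\sqrt{\frac{n}{k}}}\leq \left(\frac{e\,4\,\rho_{F,2}\!\left(\frac{4}{t^2}\frac{k}{n}\right)^2}{t^2}\right)^{k},\]
since $\nu_2=\sigma$ and $2^p=4$. So first I would plug in $t=12v$, which turns the left-hand side into exactly the event $\{\Delta_{n,k}>12v\,\sigma\,\sqrt{n/k}\}$ appearing in the corollary, and turns the argument of $\rho_{F,2}$ into $\frac{4}{144v^2}\frac{k}{n}=\frac{1}{36v^2}\frac{k}{n}$, matching the quantity in the corollary's hypothesis.

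Next I would bound the base of the exponential. With $t=12v$ the base is $\frac{4e\,\rho_{F,2}(\frac{1}{36v^2}\frac{k}{n})^2}{144v^2}=\frac{e}{36}\cdot\frac{\rho_{F,2}(\frac{1}{36v^2}\frac{k}{n})^2}{v^2}$. The hypothesis of the corollary is precisely $\frac{e}{6}\rho_{F,2}(\frac{1}{36v^2}\frac{k}{n})\leq v$, i.e.\ $\rho_{F,2}(\frac{1}{36v^2}\frac{k}{n})\leq \frac{6v}{e}$, hence $\frac{\rho_{F,2}(\cdots)^2}{v^2}\leq \frac{36}{e^2}$. Substituting gives base $\leq \frac{e}{36}\cdot\frac{36}{e^2}=\frac{1}{e}$. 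Therefore the right-hand side is at most $(1/e)^{k}=e^{-k}$, which is exactly the claimed bound.

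The only slightly delicate point — and the one I would check carefully — is the bookkeeping of constants: Proposition~\ref{prop:Deltasize} is stated with an apparent inconsistency between the event $\{\Delta_{n,k_1,k_2}>t\nu_p(n/k_{\min})^{1/p}\}$ in its conclusion and the factor-of-$2$ that appears in the displayed chain of its proof (where the threshold $\frac{t\nu_p}{2}(n/k_{\min})^{1/p}$ shows up). I would make sure I am using the version with whatever normalization makes the $\rho_{F,2}$-argument come out to $\frac{1}{36v^2}\frac{k}{n}$ after setting $t=12v$, so that the hypothesis of the corollary lines up exactly; this is a routine matter of tracking the $2^p$ factors but is the step most likely to produce an off-by-a-constant error. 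Everything else is a one-line substitution, so there is no real obstacle — the corollary is essentially Proposition~\ref{prop:Deltasize} with the parameters tuned so that the per-sample deviation bound has base exactly $1/e$.
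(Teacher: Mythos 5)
Your proof is correct and is essentially identical to the paper's: both apply Proposition \ref{prop:Deltasize} with $p=2$, $k_{\min}=k$, $t=12v$, and use the hypothesis $\frac{e}{6}\rho_{F,2}(\frac{1}{36v^2}\frac{k}{n})\leq v$ to bound the base of the exponential by $1/e$. Your side remark about the factor-of-$2$ discrepancy correctly identifies a typo in the displayed event inside the \emph{proof} of Proposition \ref{prop:Deltasize}, but the proposition's stated conclusion is the right one to use, and your constant bookkeeping matches the paper's.
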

\begin{proof}Proposition \ref{prop:Deltasize} (with $k_{\min}=k$, $p=2$ and $t=12v$) and our assumption on $v$ imply 
\[\Pr{\Delta_{n,k} >12v\,\sigma\,\sqrt{\frac{n}{k}}}\leq \left(\frac{e}{36v^2}\rho_{F,2}\left(\frac{1}{36v^2}\frac{k}{n}\right)^2\right)^{k} = \left(\frac{e}{6v}\rho_{F,2}\left(\frac{1}{36v^2}\frac{k}{n}\right)\right)^{2k}\,e^{-k}\leq e^{-k}.\]\end{proof}

\section{Sub-Gaussian concentration}\label{sec:subGaussian}

Having laid down the groundwork in previous sections, we now proceed to investigate the behavior of the trimmed mean in the finite-variance setting. Our main goal is to prove the three theorems stated in \S \ref{sub:intro:subgaussian}.  

\subsection{A master theorem} 

As it turns out, all sub-Gaussian results follow from the following theorem (proven subsequently). 

\begin{theorem}\label{thm:generalfinitevariance}Let $X_1,\dots,X_n$ be i.i.d.\ with c.d.f. $F$, a well-defined mean $\mu=\Ex{X_1}$ and finite variance $\sigma^2\in (0,+\infty)$. Let $x>0$ be given and consider a trimming parameter  $\lceil x^2/2\rceil \leq k$. Assume
\[\xi_*:=(\sqrt{2}+1)^2\,\frac{k}{n}<1.\]
and let $v>0$ be such that  
\begin{eqnarray}\label{eq:assumptionsubgaussiants1}\frac{e}{6}\rho_{F,2}\left(\frac{1}{36v^2}\frac{k}{n}\right) &\leq& v; \\ \label{eq:assumptionsubgaussiants2}(\sqrt{2}+1)\,\rho_{F,2}\left(\xi_*\right)& \leq&  v.\end{eqnarray}
Then
\begin{equation}\label{eq:generalfinitevariance}\Pr{|\overline{X}_{n,k}-\mu|>\sigma \,\frac{x+ h(\xi_*,v)\sqrt{2k}}{\sqrt{n}}}\leq 2\,\exp\left(-\frac{x^2}{2}\right) + 2\exp\left(-k\right)\end{equation}
\[\mbox{ where }h(\xi_*,v) := (1-\xi_*)^{-3/2}-1+\frac{\sqrt{2}\,v}{1-\xi_*}.\]\end{theorem}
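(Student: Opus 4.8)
The plan is to exploit the conditional i.i.d.\ structure of the trimmed sample and apply Bernstein's inequality conditionally. By Proposition~\ref{prop:richer} I write $X_i=F^{-1}(U_i)$ with $U_1,\dots,U_n$ i.i.d.\ uniform on $(0,1)$ and condition on the order statistics $a:=U_{(k)}$ and $b:=U_{(n-k+1)}$. By Corollary~\ref{cor:conditionalsample}, given $\{U_{(k)}=a,\ U_{(n-k+1)}=b\}$ the $N:=n-2k$ middle order statistics $X_{(k+1)},\dots,X_{(n-k)}$ are i.i.d.\ with law $P^{(a,b)}$, whose support has diameter $\Delta^{(a,b)}:=F^{-1}(b)-F^{-1}(a)$ and which has mean $\mu^{(a,b)}$ and variance $(\sigma^{(a,b)})^2$; since $\overline{X}_{n,k}$ is exactly the mean of these $N$ variables, applying Theorem~\ref{thm:bernstein} to them (and to their negatives) gives a two-sided conditional deviation bound.

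First I would isolate a ``good event'' on which the conditional parameters are controlled. With $\Xi:=1-U_{(n-k+1)}+U_{(k)}$, set $G:=\{\Xi\le\xi_*\}\cap\{\Delta_{n,k}\le 12v\sigma\sqrt{n/k}\}$. Proposition~\ref{prop:xi} with $t=k$, together with the elementary bound $(\sqrt{2k-1}+\sqrt k)^2\le(\sqrt2+1)^2k=\xi_*n$, gives $\Pr{\Xi>\xi_*}\le e^{-k}$, while Corollary~\ref{cor:Delta}, whose hypothesis is precisely~(\ref{eq:assumptionsubgaussiants1}), gives $\Pr{\Delta_{n,k}>12v\sigma\sqrt{n/k}}\le e^{-k}$; hence $\Pr{G^c}\le 2e^{-k}$, which will produce the $2e^{-k}$ term in the conclusion. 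On $G$ one has $\xi:=\Xi\le\xi_*$, so by Proposition~\ref{lem:varboundmoments} $\sigma^{(a,b)}\le\sigma/(1-\xi_*)$; by Proposition~\ref{prop:controltrimmedmeanpop} with $p=2$ (using that $\rho_{F,2}$ is nondecreasing) together with~(\ref{eq:assumptionsubgaussiants2}) and $\sqrt{\xi_*}=(\sqrt2+1)\sqrt{k/n}$, $|\mu^{(a,b)}-\mu|\le v\sigma\sqrt{k/n}/(1-\xi_*)$; and $\Delta^{(a,b)}=\Delta_{n,k}\le 12v\sigma\sqrt{n/k}$ by definition of $G$.

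Next I would apply Bernstein conditionally with $z=x$ (in both tails): on $G$,
\[\Pr{\,|\overline{X}_{n,k}-\mu^{(a,b)}|>\frac{x\,\sigma^{(a,b)}}{\sqrt N}+\frac{x^2\,\Delta^{(a,b)}}{12N}\ \Big|\ U_{(k)}=a,\ U_{(n-k+1)}=b\,}\le 2\,e^{-x^2/2}.\]
By the triangle inequality $|\overline{X}_{n,k}-\mu|\le|\overline{X}_{n,k}-\mu^{(a,b)}|+|\mu^{(a,b)}-\mu|$, so it remains to check the deterministic inequality, valid on $G$,
\[\frac{x\,\sigma^{(a,b)}}{\sqrt N}+\frac{x^2\,\Delta^{(a,b)}}{12N}+|\mu^{(a,b)}-\mu|\ \le\ \frac{\sigma\,\bigl(x+h(\xi_*,v)\,\sqrt{2k}\bigr)}{\sqrt n}.\]
For this I would plug in the three bounds above and use $N=n-2k\ge(1-\xi_*)n$ (because $2/(\sqrt2+1)^2<1$), $x^2\le 2k$ and $x\le\sqrt{2k}$ (both from $k\ge\lceil x^2/2\rceil$). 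The leading term splits as $\tfrac{x\sigma^{(a,b)}}{\sqrt N}\le\tfrac{\sigma x}{(1-\xi_*)^{3/2}\sqrt n}=\tfrac{\sigma x}{\sqrt n}+\tfrac{\sigma x}{\sqrt n}\bigl((1-\xi_*)^{-3/2}-1\bigr)$; the Bernstein higher-order term is $\le\tfrac{\sigma\sqrt{2k}}{\sqrt n}\cdot\tfrac{\sqrt2 v}{1-\xi_*}$; and the bias term is $O\!\bigl(v\sigma\sqrt{2k}/\sqrt n\bigr)$. Bounding $x\le\sqrt{2k}$ in the excess and collecting the coefficient of $\sigma\sqrt{2k}/\sqrt n$ reproduces $h(\xi_*,v)=(1-\xi_*)^{-3/2}-1+\tfrac{\sqrt2 v}{1-\xi_*}$. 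Finally, since on $G$ the deterministic threshold is at most $\sigma(x+h\sqrt{2k})/\sqrt n$, the displayed bound gives $\Pr{|\overline{X}_{n,k}-\mu|>\sigma(x+h\sqrt{2k})/\sqrt n\mid U_{(k)}=a,\ U_{(n-k+1)}=b}\le 2e^{-x^2/2}$ for every $(a,b)$ consistent with $G$; integrating over $G$ and adding $\Pr{G^c}$ yields the claimed $2e^{-x^2/2}+2e^{-k}$.

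The step I expect to be most delicate is this last deterministic inequality: one must split $x\sigma^{(a,b)}/\sqrt N$ into the clean $\sigma x/\sqrt n$ plus an error, and then track the constants carefully so that the error, the Bernstein correction and the bias collapse to exactly $h(\xi_*,v)\sqrt{2k}$ rather than a larger expression — this is where the specific value $\xi_*=(\sqrt2+1)^2k/n$ and conditions~(\ref{eq:assumptionsubgaussiants1})--(\ref{eq:assumptionsubgaussiants2}) must be used without waste. A secondary point requiring care is the conditioning step itself when $F$ has atoms, which is handled cleanly by the quantile-transform construction of Proposition~\ref{prop:richer} and Corollary~\ref{cor:conditionalsample}.
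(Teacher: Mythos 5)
Your proposal follows essentially the same route as the paper's own proof: the same quantile-transform conditioning, the same good event $\{\Xi\le\xi_*\}\cap\{\Delta_{n,k}\le 12v\sigma\sqrt{n/k}\}$ controlled via Proposition~\ref{prop:xi} and Corollary~\ref{cor:Delta}, the same conditional Bernstein step, and the same three deterministic bounds (bias, variance inflation, Bernstein correction) collected into $h(\xi_*,v)$. The only caveat is in the final bookkeeping: summing the bias term $\tfrac{v}{1-\xi_*}\sigma\sqrt{k/n}$ and the Bernstein correction $\tfrac{2v}{1-\xi_*}\sigma\sqrt{k/n}$ gives coefficient $\tfrac{3v}{\sqrt2(1-\xi_*)}$ on $\sigma\sqrt{2k/n}$ rather than the stated $\tfrac{\sqrt2\,v}{1-\xi_*}$, but this slight constant discrepancy is present in the paper's own computation as well and does not reflect a gap in your argument relative to it.
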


\begin{proof}We work under the framework of Proposition \ref{prop:richer}, whereby we may assume that $X_i=F^{-1}(U_i)$ and $X_{(i)}=F^{-1}(U_{(i)})$ for i.i.d.\ random variables $\{U_i\}_{i\in [n]}$ that are uniform over $(0,1)$. We write $\Xi:=1-(U_{(n-k+1)}-U_{(k)})$ and recall that $\Delta_{n,k} = X_{(n-k+1)}-X_{(k)} = F^{-1}(U_{(n-k+1)})-F^{-1}(U_{(k)})=\Delta^{(U_{(k)},U_{(n-k+1)})}$.

Corollary \ref{cor:conditionalsample} implies that Bernstein's inequality (Theorem \ref{thm:bernstein}) applies conditionally on $U_{(k)},U_{(n-k+1)}$:  $\overline{X}_{n,k}$ is an average of $n-2k$ i.i.d.\ random variables with mean $\mu^{(U_{(k)},U_{(n-k+1)})}$ and variance $(\sigma^{(U_{(k)},U_{(n-k+1)})})^2$. Moreover, the random variables in the average, when centered, are bounded by $\Delta_{n,k}$ in absolute value. We obtain:
\begin{equation}\label{eq:bernsteinsubgaussian}\Pr{\left.|\overline{X}_{n,k}-\mu^{(U_{(k)},U_{(n-k+1)})}|>\frac{x\sigma^{(U_{(k)},U_{(n-k+1)})}}{\sqrt{n-2k}} + \frac{x^2\Delta_{n,k}}{12(n-2k)}\right|(U_{(k)},U_{(n-k+1)})}\leq 2e^{-\frac{x^2}{2}}.\end{equation}

Our next step is to define a ``good event'' where the the trimmed population parameters $\mu^{(U_{(k)},U_{(n-k+1)})}$, $\sigma^{(U_{(k)},U_{(n-k+1)})}$ and $\Delta_{n,k}$ satisfy deterministic bounds. Specifically, let ${\sf Good}$ denote the event where the following two inequalities hold. 
\begin{eqnarray}\label{eq:GoodXiSubGaussian}\Xi &\leq & \xi_*:=\frac{(\sqrt{2}+1)^2\,k}{n};\\ \label{eq:GoodDeltaSubGaussian}
\Delta_{n,k} &\leq & 12v\,\sigma\,\sqrt{\frac{n}{k}}.
\end{eqnarray}
When {\sf Good} holds, Proposition \ref{prop:controltrimmedmeanpop} and assumption (\ref{eq:assumptionsubgaussiants2}) give
\[|\mu-\mu^{(U_{(k)},U_{(n-k+1)})}|\leq \frac{\sigma\,\rho_{F,2}(\xi_*)\,\sqrt{\xi_*}}{1-\xi_*}\leq \frac{v}{1-\xi_*}\sigma\,\sqrt{\frac{k}{n}}.\]
Moreover, Proposition \ref{lem:varboundmoments} and the inequalities $n-2k\geq (1-\xi_*)\,n$, $k\geq x^2/2$ imply
\[\frac{x\sigma^{(U_{(k)},U_{(n-k+1)})}}{\sqrt{n-2k}}\leq \frac{x\sigma}{(1-\xi_*)^{3/2}\sqrt{n}}\leq \frac{\sigma}{\sqrt{n}}\,(x +  ((1-\xi_*)^{-3/2}-1)\sqrt{2k})\]
and (\ref{eq:GoodDeltaSubGaussian}) combined with $k\geq \lceil x^2/2\rceil$ gives:
\[\frac{x^2\Delta_{n,k}}{12(n-2k)}\leq \frac{2v}{1-\xi_*}\,\sigma\,\sqrt{\frac{k}{n}}.\]

The upshot of this discussion is that, when {\sf Good} holds,
\[|\mu-\mu^{(U_{(k)},U_{(n-k+1)})}| + \frac{x\sigma^{(U_{(k)},U_{(n-k+1)})}}{\sqrt{n-2k}} + \frac{x^2\Delta_{n,k}}{12(n-2k)}\leq \frac{x\sigma}{\sqrt{n}} + h(\xi_*,v)\,\sigma\,\sqrt{\frac{2k}{n}}.\]

and we obtain from (\ref{eq:bernsteinsubgaussian}) that 
\begin{equation}\Pr{\left\{|\overline{X}_{n,k}-\mu|>\frac{x\sigma}{\sqrt{n}} +h(\xi_*,v)\sigma\,\sqrt{\frac{2k}{n}}\right\}\cap {\sf Good}}\leq 2e^{-\frac{x^2}{2}}.\end{equation}

To finish the proof,we show that $\Pr{{\sf Good}^c}\leq 2e^{-k}$. Since ${\sf Good}^c$ is the event where either (\ref{eq:GoodXiSubGaussian}) or (\ref{eq:GoodDeltaSubGaussian}) do not hold, it suffices to bound the corresponding probabilities individually by $e^{-k}$.

Proposition \ref{prop:xi} (applied with $k_1=k_2=t=k$) give \[\Pr{\mbox{(\ref{eq:GoodXiSubGaussian}) does not hold}}\leq e^{-k},\]
whereas Corollary \ref{cor:Delta} and assumption (\ref{eq:assumptionsubgaussiants1}) give
\[\Pr{\mbox{(\ref{eq:GoodDeltaSubGaussian}) does not hold}}\leq e^{-k}.\]

\end{proof}

\subsection{Proofs of the main sub-Gaussian results}

We now use Theorem \ref{thm:generalfinitevariance} to obtain the three theorems in \S \ref{sub:intro:subgaussian}.
\subsubsection{Proof of Theorem \ref{thm:allsubgaussian}}\label{subsub:proof:allsubgaussian}
\begin{proof} We will apply Theorem \ref{thm:generalfinitevariance} with $2k=2\lceil x^2/2\rceil\leq 2+x^2$. With this choice, the theorem gives 
\begin{equation}\label{eq:allvariance}\Pr{|\overline{X}_{n,k}-\mu|>\sigma \,\frac{\sqrt{2}h(\xi_*,v)+ (1+h(\xi_*,v))\,x)}{\sqrt{n}}}\leq 4\,\exp\left(-\frac{x^2}{2}\right)\end{equation}
whenever $v$ satisfies conditions (\ref{eq:assumptionsubgaussiants1}) and (\ref{eq:assumptionsubgaussiants2}). Now, our assumptions imply $\xi_*\leq 1/2$, so that $(1-\xi_*)^{-3/2}\leq 2\sqrt{2}$. Using the bounds $\rho_{F,2}\leq 1$ and $e\leq 6$, we see that $v:=\sqrt{2} + 1$ satisfies (\ref{eq:assumptionsubgaussiants1}) and (\ref{eq:assumptionsubgaussiants2}), and
\[h(\xi_*,v)\leq 3+4\sqrt{2}.\]
Plugging this back into (\ref{eq:allvariance}) finishes the proof.\end{proof}

\subsubsection{Proof of Theorem \ref{thm:sharpersubgaussian}}\label{subsub:proof:sharpersubgaussian}
\begin{proof}Like with the previous proof, we apply Theorem \ref{thm:generalfinitevariance}. With the choice $k=k(x)=\lceil x^2/2\rceil$, we see from (\ref{eq:allvariance}) that we can obtain the desired probability bound if we can show $h(\xi_*,v)\leq a$ for some valid choice of $\xi_*,v$.

The first case of the theorem can be dealt with as follows. Fix $v:=a/8$ and notice that $h(\xi_*,v)\leq a$ for small enough $\xi_*=(\sqrt{2}+1)^2k/n$. Moreover, conditions
(\ref{eq:assumptionsubgaussiants1}) and (\ref{eq:assumptionsubgaussiants2}) correspond to
\begin{eqnarray*}\label{eq:assumptionsubgaussiantsXXX}\frac{e}{6}\rho_{F,2}\left(\frac{2}{a^2}\frac{k}{n}\right) &\leq& \frac{a}{8}; \\ \label{eq:assumptionsubgaussiantsYYY}(\sqrt{2}+1)\,\rho_{F,2}\left((\sqrt{2}+1)^2\,\frac{k}{n}\right)& \leq&  \frac{a}{8};\end{eqnarray*}
and are automatically satisfied if $k/n$ is small enough, since $\rho_{F,2}(\xi)\to 0$ as $\xi\to 0$. Therefore, Theorem \ref{thm:generalfinitevariance} can be applied whenever $k/n\leq f_F(a)$ for some value $f_F(a)>0$ depending only on $F$ and $a$. Recalling the fact that $\sqrt{k/n}\leq (1+x)/\sqrt{n}$, we see that it suffices to require $(1+x)/\sqrt{n}\leq \eta_F(a)$ where $\eta_F(a):=\sqrt{f_F(a)}$.

are satisfied for $v=a/8$ and $\xi_*$ small enough (depending solely on $\rho_{F,2}$). 

For the second case, we assume $\nu_p^p=\Ex{|X_1-\mu|^p}\leq (\kappa_{2,p}\sigma)^p$ for some $p>2$. We aim at selecting the a value $v_*$ satisfying (\ref{eq:assumptionsubgaussiants1}) and (\ref{eq:assumptionsubgaussiants2}), and then plug this value into $h$ to obtain a bound. 

 To this end, recall that Proposition \ref{prop:rhop} gives the following bound on $\rho_{F,2}$:
\[\forall \xi\in [0,1],\, \rho_{F,2}(\xi)\leq \kappa_{2,p}\,\xi^{\frac{1}{2}-\frac{1}{p}}\] 
and the quantities depending on $\rho_{F,2}$ in conditions (\ref{eq:assumptionsubgaussiants1}) and (\ref{eq:assumptionsubgaussiants2}) can be bounded as follows:
\begin{eqnarray}\label{eq:conditionsharper1}
    \frac{e}{6}\rho_{F,2}\left(\frac{1}{36v^2}\frac{k}{n}\right) &\leq & \frac{\kappa_{2,p}}{v^{\frac{p-2}{p}}}\,\left(\frac{k}{n}\right)^{\frac{p-2}{2p}};\\ \label{eq:conditionsharper2}
   (\sqrt{2}+1)\,\rho_{F,2}\left(\frac{(\sqrt{2}+1)^2\,k}{n}\right)& \leq&  \kappa_{2,p}\,6^{\frac{p-1}{p}}\,\left(\frac{k}{n}\right)^{\frac{p-2}{2p}}.
\end{eqnarray}
Notice that we used $e\leq 6$ and $(\sqrt{2}+1)^2\leq 6$ and omitted a few constants $\leq 1$ to simplify the calculations above. 

A sufficient condition for $v$ to satisfy (\ref{eq:assumptionsubgaussiants1}) and (\ref{eq:assumptionsubgaussiants2}) is that it is an upper bound on the RHS of (\ref{eq:conditionsharper1}) and (\ref{eq:conditionsharper2}). That is, we need that
\begin{equation}\label{eq:choiceofvsharper}v\geq v_0:=\max\left\{{\kappa_{2,p}}^{\frac{2}{2p-2}}\,\left(\frac{k}{n}\right)^{\frac{p-2}{4p-4}},\kappa_{2,p}\,6^{\frac{p-1}{p}}\,\left(\frac{k}{n}\right)^{\frac{p-2}{2p}}\right\}.\end{equation}
To obtain a cleaner value, we note that $\kappa_{2,p}\geq 1$ (by Jensen's inequality) and that, since $k/n\leq 1/2$ by assumption, 
\[v_0\leq v_*:= 6\kappa_{2,p}\left(\frac{k}{n}\right)^{\frac{p-2}{4p-4}},\]
which is the value we use in what follows. Let us record it for later use. 
\begin{proposition}\label{prop:valueofv_*} If $\nu_p\leq \kappa_{2,p}\sigma$ for some $p>2$, the choice of \[v=v_*:= 6\kappa_{2,p}\left(\frac{k}{n}\right)^{\frac{p-2}{4p-4}}\]
satisfies (\ref{eq:assumptionsubgaussiants1}) and (\ref{eq:assumptionsubgaussiants2}).\end{proposition}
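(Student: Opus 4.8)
The plan is to substitute the power-law bound on $\rho_{F,2}$ furnished by Proposition \ref{prop:rhop} into each of the two conditions (\ref{eq:assumptionsubgaussiants1}) and (\ref{eq:assumptionsubgaussiants2}), which turns both into explicit inequalities between powers of $v$ and of $k/n$, and then to check that the specific value $v=v_*$ clears the resulting thresholds. Concretely, since $\nu_p\leq \kappa_{2,p}\sigma=\kappa_{2,p}\nu_2$ with $p>2$, the last assertion of Proposition \ref{prop:rhop} (read with ``$p$''$=2$ and ``$q$''$=p$) gives $\rho_{F,2}(\xi)\leq \kappa_{2,p}\,\xi^{\frac12-\frac1p}$ for all $\xi\in[0,1]$; note that $\rho_{F,2}(\xi_*)$ is meaningful here because the standing hypothesis — the assumptions of Theorem \ref{thm:allsubgaussian}, under which Theorem \ref{thm:sharpersubgaussian} is being proved — forces $\xi_*=(\sqrt2+1)^2k/n\leq 1/2<1$.

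Plugging this bound into the left-hand sides of (\ref{eq:assumptionsubgaussiants1}) and (\ref{eq:assumptionsubgaussiants2}) is precisely the computation already displayed in (\ref{eq:conditionsharper1})--(\ref{eq:conditionsharper2}), where one additionally uses $e\leq 6$, $(\sqrt2+1)^2\leq 6$, and drops a few constant factors that are $\leq 1$. This reduces the task to choosing $v$ so that it dominates the two right-hand sides $\kappa_{2,p}\,v^{-(p-2)/p}(k/n)^{(p-2)/(2p)}$ and $\kappa_{2,p}\,6^{(p-1)/p}(k/n)^{(p-2)/(2p)}$. The second is read off directly; for the first, which carries $v$ on both sides, I would rearrange $v\geq \kappa_{2,p}\,v^{-(p-2)/p}(k/n)^{(p-2)/(2p)}$ using $1+\tfrac{p-2}{p}=\tfrac{2p-2}{p}$ into a lower bound on $v$ whose $\kappa_{2,p}$-exponent is at most $1$ and whose $(k/n)$-exponent equals $(p-2)/(4p-4)$. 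Both requirements are then implied by $v\geq v_0$, with $v_0$ the maximum appearing in (\ref{eq:choiceofvsharper}).

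It then remains to verify $v_0\leq v_*=6\kappa_{2,p}(k/n)^{(p-2)/(4p-4)}$, and the two facts this needs are valid exactly because $p\geq 2$. First, Jensen's inequality gives $\kappa_{2,p}\geq 1$, so $\kappa_{2,p}^{\theta}\leq \kappa_{2,p}$ for every exponent $\theta\leq 1$, which handles the $\kappa$-powers in both terms of $v_0$ (together with $6^{(p-1)/p}\leq 6$). Second, the standing assumption gives $k/n\leq 1/2<1$, and since $(p-2)/(2p)\geq (p-2)/(4p-4)$ we get $(k/n)^{(p-2)/(2p)}\leq (k/n)^{(p-2)/(4p-4)}$, which lines up the $(k/n)$-powers in the second term with those of $v_*$. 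Combining these, $v_0\leq v_*$, so $v_*$ satisfies (\ref{eq:assumptionsubgaussiants1}) and (\ref{eq:assumptionsubgaussiants2}).

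There is no genuinely hard step: the proposition merely repackages estimates already run through in the paragraph preceding it. The only point demanding a little care is the exponent bookkeeping — getting $1+\tfrac{p-2}{p}=\tfrac{2p-2}{p}$ right when solving the first inequality for $v$, and confirming that both monotonicity comparisons, $\kappa_{2,p}^{\theta}\leq\kappa_{2,p}$ and $(k/n)^{(p-2)/(2p)}\leq(k/n)^{(p-2)/(4p-4)}$, point in the intended direction for every $p>2$.
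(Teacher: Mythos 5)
Your proposal is correct and follows essentially the same route as the paper: bound $\rho_{F,2}(\xi)\leq\kappa_{2,p}\,\xi^{1/2-1/p}$ via Proposition \ref{prop:rhop}, reduce both conditions to $v\geq v_0$ as in (\ref{eq:choiceofvsharper}), and then use $\kappa_{2,p}\geq 1$ and $k/n\leq 1/2$ to check $v_0\leq v_*$. The exponent bookkeeping (solving $v^{(2p-2)/p}\geq\kappa_{2,p}(k/n)^{(p-2)/(2p)}$ and comparing $(p-2)/(2p)$ with $(p-2)/(4p-4)$) matches the paper's computation.
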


To finish, notice that, if $\xi_*\leq 1/2$,
\[(1-\xi_*)^{-3/2}-1\leq \frac{3\xi_*}{2\,(1-\xi_*)^{5/2}}\leq 6\sqrt{2}\xi_*\]
and 
\[h(\xi_*,v)\leq 6\sqrt{2}\,\xi_*+4v \leq  216\sqrt{2}\,\frac{k}{n} + 4v.\]

For $v=v_*$ and $k=\lceil x^2/2\rceil\leq (1+x)^2$, the RHS above is the quantity appearing in the RHS of (\ref{eq:conditiononasharper}) in the statement of the Theorem. This ensures $h(\xi_*,v_*)\leq a$.\end{proof}

\subsubsection{Proof of Theorem \ref{thm:multiplesubgaussian}}\label{subsub:proof:multiplesubgaussian}

\begin{proof}We will apply Theorem \ref{thm:generalfinitevariance} with a choice of $v$ that guarantees $h(\xi_*,v)\leq 1/\sqrt{2k_*}$; notice that this will immediately lead to the desired bound. 

For this purpose, we can reuse the calculations starting around (\ref{eq:conditionsharper1}) and (\ref{eq:conditionsharper2}) in the previous proof, with $k_*$ replacing $k$, and obtain that, for $v_*$ as in Proposition \ref{prop:valueofv_*}, we have $h(\xi_*,v_*)$:
\[h(\xi_*,v_*) \leq  216\sqrt{2}\,\frac{k_*}{n} + 24\kappa_{2,p}\left(\frac{k_*}{n}\right)^{\frac{p-2}{4p-4}}.\]

To finish, we note that the assumption on $k_*$ in the theorem ensures that $h(\xi_*,v_*)\leq 1/\sqrt{2k_*}$, as desired.\end{proof}

\section{Precise Gaussian approximation and confidence intervals}\label{sec:CLT}

In this section, we prove the results stated in \S \ref{sub:intro:CLT}. We first show that trimmed mean satisfies the same Central Limit Theorem under a finite variance condition. Under stronger assumptions, we show that the trimmed mean is nearly Gaussian even when one goes very deeply into its left and right tails. Finally, this stronger result will be shown to have strong implications for constructing confidence intervals.  

\subsection{A general result}

The first part of this section presents general Gaussian approximation result from which the main theorems in this section will follow. This is analogous to what we did with Theorem \ref{thm:generalfinitevariance} in Section \ref{sec:subGaussian}. 

\begin{theorem}\label{thm:generalpreciseGaussian}There exists a universal constant $C>0$ such that the following holds. Let $X_1,\dots,X_n$ be i.i.d.\ with c.d.f. $F$, a well-defined mean $\mu=\Ex{X_1}$ and finite variance $\sigma^2\in (0,+\infty)$. Let $x>0$ be given and consider a trimming parameter $k\in\N$ with $k\geq 2$. Assume
\[\xi_*:=(\sqrt{2}+1)^2\,\frac{k}{n}\leq \frac{1}{2},\]
and let $v>0$ be such that  
\begin{eqnarray}\label{eq:conditionv1preciseGaussian}
\frac{e}{6}\rho_{F,2}\left(\frac{1}{36v^2}\frac{k}{n}\right) &\leq& v;\\ \label{eq:conditionv2preciseGaussian}
\rho_{F,2}\left(\xi_*\right)&\leq & v;\\
\label{eq:conditionxpreciseGaussian}(1+x)\,v\,\sqrt{k} &\leq & \frac{1}{C};\\
\label{eq:conditionkpreciseGaussian}(1+x)\,\frac{k}{n} &\leq & \frac{1}{C};\\ \label{eq:conditionx3preciseGaussian}
\frac{(1+x)^3\,v}{\sqrt{k}}&\leq & \frac{1}{C}.
\end{eqnarray}
Then:
\begin{equation}\label{eq:1ststatementGaussian}\left|\Pr{\overline{X}_{n,k}>\mu+\frac{x\sigma}{\sqrt{n}}} - (1-\Phi(x))\right|\leq \eta\,(1-\Phi(x)) + 4e^{-k},\end{equation}
where 
\[\eta:=C\left(\frac{(1+x)^3v}{\sqrt{k}} + (1+x)^2v\sqrt{k} + \frac{(1+x)^2\,k}{n}\right).\]
If in addition $1\leq x<\sqrt{n-2k}/2$, we also have
\[\left|\Pr{\overline{X}_{n,k}>\mu+\frac{x\hat{\sigma}_{n,k}}{\sqrt{n}}} - (1-\Phi(x))\right|\leq\eta\,(1-\Phi(x)) + 4e^{-k}\]
and
\[\Pr{\hat{\sigma}_{n,k}>C\,(1+ (v\sqrt{k}+k/n))\,\sigma}\leq 4e^{-k}.\]
\end{theorem}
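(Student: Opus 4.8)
\textbf{Proof plan for Theorem \ref{thm:generalpreciseGaussian}.}
The strategy mirrors the proof of Theorem \ref{thm:generalfinitevariance}: work in the coupling of Proposition \ref{prop:richer} so that $X_i=F^{-1}(U_i)$, condition on $U_{(k)}$ and $U_{(n-k+1)}$, and apply a concentration/approximation tool to the conditionally i.i.d.\ sample. The key difference is that instead of Bernstein's inequality we will apply the self-normalized CLT of Jing, Shao and Wang (Theorem \ref{thm:CLTJing}) to the trimmed sample. Writing $a=U_{(k)}$, $b=U_{(n-k+1)}$, Corollary \ref{cor:conditionalsample} says that conditionally the $n-2k$ surviving order statistics are i.i.d.\ from $P^{(a,b)}$, with mean $\mu^{(a,b)}$, variance $(\sigma^{(a,b)})^2$, and support contained in an interval of length $\Delta_{n,k}=\Delta^{(a,b)}$. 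So Theorem \ref{thm:CLTJing} applies conditionally provided the ratio $r=(1+\max\{x,0\})^3\,\Delta^{(a,b)}/(\sigma^{(a,b)}\sqrt{n-2k})$ is at most $1/A$. The plan is therefore: (i) define a ``good event'' on which $\Xi\le\xi_*$ and $\Delta_{n,k}\le 12v\,\sigma\sqrt{n/k}$, exactly as in Theorem \ref{thm:generalfinitevariance}, controlled by Proposition \ref{prop:xi} and Corollary \ref{cor:Delta}, so that $\Pr{\textsf{Good}^c}\le 2e^{-k}$; (ii) on \textsf{Good}, bound $r$ using $\sigma^{(a,b)}\gtrsim\sigma$ (from Proposition \ref{lem:varboundmoments}, valid since $\xi_*\le 1/2$ and $\rho_{F,2}(\xi_*)\le v$ is small by \eqref{eq:conditionv2preciseGaussian}), $\Delta_{n,k}\le 12v\sigma\sqrt{n/k}$, and $n-2k\ge(1-\xi_*)n\ge n/2$; this gives $r\lesssim (1+x)^3 v/\sqrt{k}$, which is $\le 1/A$ by \eqref{eq:conditionx3preciseGaussian} if $C$ is large enough.

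\textbf{From the conditional self-normalized statement to the unconditional $\sigma$-normalized one.}
Theorem \ref{thm:CLTJing} gives, conditionally on \textsf{Good}, that $\Pr{\overline X_{n,k}-\mu^{(a,b)}\ge y V_n/\sqrt{n-2k}}=(1+\eta_0)(1-\Phi(y))$ with $|\eta_0|\le Ar$, where $V_n^2=(n-2k)^{-1}\sum(X_{(i)}-\mu^{(a,b)})^2$ is the (non-centered-at-$\overline X$) empirical second moment. Two discrepancies must be absorbed. First, I must replace the random normalizer $V_n$ by the deterministic $\sigma$ and rescale $\sqrt{n-2k}\to\sqrt n$: since the sample is bounded by $\Delta_{n,k}$, Bernstein (Theorem \ref{thm:bernstein}) applied conditionally shows $V_n^2$ concentrates around $(\sigma^{(a,b)})^2$ with deviation controlled by $\Delta_{n,k}^2/(n-2k)\lesssim v^2\sigma^2$, so $V_n=(1+O(v\sqrt{\log(1/\delta)/k}+\ldots))\,\sigma^{(a,b)}$ on a further high-probability event; combining with $\sigma^{(a,b)}=(1+O(v^2))\,\sigma$ and $\sqrt{(n-2k)/n}=1+O(k/n)$ lets me convert the event $\{\overline X_{n,k}-\mu^{(a,b)}\ge x\sigma/\sqrt n\}$ into one of the form $\{\overline X_{n,k}-\mu^{(a,b)}\ge x'V_n/\sqrt{n-2k}\}$ with $x'=x(1+O(v\sqrt k+k/n))$. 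Second, I must replace $\mu^{(a,b)}$ by $\mu$: Proposition \ref{prop:controltrimmedmeanpop} (with $p=2$) gives $|\mu-\mu^{(a,b)}|\le \sigma\rho_{F,2}(\xi_*)\sqrt{\xi_*}/(1-\xi_*)\lesssim v\sigma\sqrt{k/n}$, which is a further shift of size $O(v\sqrt k)$ in units of $\sigma/\sqrt n$. Feeding $x'$ into $1-\Phi(\cdot)$ and using the elementary estimate $|1-\Phi(x(1+\theta))-(1-\Phi(x))|\lesssim (1+x^2)\theta\,(1-\Phi(x))$ for $|\theta|$ small (Gaussian tail ratio / mean value theorem on $\log(1-\Phi)$) produces the multiplicative error $\eta\,(1-\Phi(x))$ with $\eta$ as claimed, plus the additive $4e^{-k}$ collecting $\Pr{\textsf{Good}^c}$ and the extra Bernstein event for $V_n$. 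Doubling (or arguing symmetrically for the left tail via $-X_i$) handles the two-sided nature implicit in the statement; actually the displayed inequality is one-sided, so a single application suffices.

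\textbf{The $\hat\sigma_{n,k}$ statements.}
For the self-normalized-by-$\hat\sigma_{n,k}$ bound, note $\hat\sigma_{n,k}^2=V_n^2-(\overline X_{n,k}-\mu^{(a,b)})^2$, so $\hat\sigma_{n,k}^2\le V_n^2$ always, and on the good events above $V_n=(1+O(v\sqrt k+k/n))\sigma$; hence the event $\{\overline X_{n,k}>\mu+x\hat\sigma_{n,k}/\sqrt n\}$ is sandwiched between events of the form $\{\overline X_{n,k}>\mu+x'\sigma/\sqrt n\}$ with $x'=x(1\pm O(v\sqrt k+k/n))$ (here the condition $x<\sqrt{n-2k}/2$ ensures $(\overline X_{n,k}-\mu^{(a,b)})^2$ is a genuinely lower-order correction to $V_n^2$), so the same tail-ratio argument applies. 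The last display, $\Pr{\hat\sigma_{n,k}>C(1+(v\sqrt k+k/n))\sigma}\le 4e^{-k}$, follows directly from $\hat\sigma_{n,k}\le V_n$, the conditional Bernstein bound on $V_n$ (with deviation parameter $z\asymp\sqrt k$, which is why $e^{-k}$ appears), Proposition \ref{lem:varboundmoments} bounding $\sigma^{(a,b)}$, and $\Pr{\textsf{Good}^c}\le 2e^{-k}$.

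\textbf{Main obstacle.}
The routine part is the bookkeeping of the many multiplicative $(1+O(\cdot))$ factors; the genuinely delicate step is controlling how a multiplicative perturbation $x\mapsto x(1+\theta)$ of the argument propagates through $1-\Phi(x)$ \emph{uniformly in $x$}, since naively $1-\Phi(x(1+\theta))/(1-\Phi(x))$ can blow up like $e^{x^2\theta}$ for large $x$. This is exactly why the hypotheses force $(1+x)^2 v\sqrt k\le 1/C$ and $(1+x)^2 k/n\le 1/C$ (not merely $(1+x)v\sqrt k$): one needs $x^2\theta=O(1)$ with $\theta$ the aggregate relative error $\asymp v\sqrt k+k/n$, plus the linear-in-$x$ shift from $|\mu-\mu^{(a,b)}|$ contributing the $(1+x)v\sqrt k$ term in $\eta$. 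Getting the exponents of $x$ to match the stated $\eta$ — and tracking that each high-probability event can be charged to an $e^{-k}$ rather than something weaker — is where the care lies.
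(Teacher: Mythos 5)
Your proposal follows essentially the same route as the paper's proof: condition on $U_{(k)},U_{(n-k+1)}$, apply the Jing--Shao--Wang self-normalized CLT to the conditionally i.i.d.\ trimmed sample, control $\Xi$, $\Delta_{n,k}$ and the concentration of $V_{n}$ around $\sigma^{(a,b)}$ on a good event of probability $1-O(e^{-k})$, and absorb the resulting shift of the argument of $1-\Phi$ via a Gaussian tail perturbation bound (the paper's Proposition \ref{prop:tailgaussian}), with the same Step-4 treatment of $\hat\sigma_{n,k}$ via $\hat\sigma_{n,k}^2=V_{n}^2-(\overline X_{n,k}-\mu^{(a,b)})^2$. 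You also correctly identify the delicate point (uniform-in-$x$ propagation of the multiplicative perturbation through the Gaussian tail), which is exactly what the paper's $H_{n,k}(x)$ device and tail-ratio lemma are for.
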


\begin{proof} Once again, it will be useful to invoke Proposition \ref{prop:richer} and take $X_i=F^{-1}(U_i)$ and $X_{(i)}=F^{-1}(U_{(i)})$ for i.i.d.\ random variables $\{U_i\}_{i\in [n]}$ that are uniform over $(0,1)$. Recall $\Delta_{n,k} = X_{(n-k+1)}-X_{(k)} = F^{-1}(U_{(n-k+1)})-F^{-1}(U_{(k)})$.\\

\noindent\underline{Proof outline.} The proof will consist of four steps. In the first one, we prove a conditional Gaussian tail approximation given $(U_{(k)},U_{(n-k+1)})$. This approximation will have the form
\begin{equation}\label{eq:desiredboundCLTgeneral}\left|\Pr{\left.\overline{X}_{n,k}>\mu+\frac{x\sigma}{\sqrt{n}}\right| U_{(k)},U_{(n-k+1)}} - (1-\Phi(x))\right|\leq R_{n}(x)\,(1-\Phi(x))\, + I_n(x)\end{equation}
where $R_{n}(x)$ and $I_n(x)$ are deterministic functions of $(x,U_{(k)},U_{(n-k+1)})$. 

For the second step, we introduce an event ${\sf Good}$  and show that $I_n(x)=0$ and $R_n(x)\leq \eta$ for a suitable deterministic value $\eta$. As a result, 
\[\left|\Pr{\left.\overline{X}_{n,k}>\mu+\frac{x\sigma}{\sqrt{n}}\right| U_{(k)},U_{(n-k+1)}} - (1-\Phi(x))\right|\,{\bf 1}_{\sf Good}\leq \eta\,(1-\Phi(x)),\]
and so
\begin{align}\nonumber
    & \,\left|\Pr{\overline{X}_{n,k}>\mu+\frac{x\sigma}{\sqrt{n}}} - (1-\Phi(x))\right| \\ \nonumber\leq &\,\Ex{\left|\Pr{\left.\overline{X}_{n,k}>\mu+\frac{x\sigma}{\sqrt{n}}\right| U_{(k)},U_{(n-k+1)}} - (1-\Phi(x))\right|} \\ \nonumber\leq &\,  \Ex{\left(\left|\Pr{\left.\overline{X}_{n,k}>\mu+\frac{x\sigma}{\sqrt{n}}\right| U_{(k)},U_{(n-k+1)}} - (1-\Phi(x))\right|{\bf 1}_{\sf Good}\right) + {\bf 1}_{\sf Good^c}}\\ 
    \label{eq:desiredboundinfullCLTgeneral}
    \leq & \,\eta\,(1-\Phi(x)) + \Pr{{\sf Good}^c}, 
\end{align}
In Step 3, we bound the probability of ${\sf Good}^c$ and finish the proof of (\ref{eq:1ststatementGaussian}). The fourth and final step of the proof adapts the above argument to finish the proof of Theorem \ref{thm:generalpreciseGaussian}.\\ 

\noindent\underline{Step 1: conditional Gaussian approximation.} Write
\[V_{n,k}^2 := \frac{1}{n-2k}\sum_{i=k+1}^{n-k}(X_{(i)}-\mu^{(U_{(k)},U_{(n-k+1)})})^2 = \hat{\sigma}^{2}_{n,k} + (\overline{X}_{n,k}-\mu^{(U_{(k)},U_{(n-k+1)})})^2.\]
The CLT from \cite{Jing2003}, in the form given by Theorem \ref{thm:CLTJing} above, may be combined with Corollary \ref{cor:conditionalsample}, which implies that the random variables $\{X_{(i)}\}_{i=k+1}^{n-k}$ are conditionally i.i.d.\ (up to their ordering). We obtain that for any $x>0$,
\begin{align} \nonumber&\left|\Pr{\left.\overline{X}_{n,k}>\mu^{(U_{(k)},U_{(n-k+1)})}+\frac{xV_{n,k}}{\sqrt{n-2k}}\right| U_{(k)},U_{(n-k+1)}} - (1-\Phi(x))\right|\\ \label{eq:conditionalCLT} \leq & \frac{A\Delta_{n,k}(1+x)^3}{\sigma^{(U_{(k)},U_{(n-k+1)})}\sqrt{n-2k}}\,(1-\Phi(x)) + {\bf 1}_{\left\{\frac{\Delta_{n,k}(1+x)^3}{\sigma^{(U_{(k)},U_{(n-k+1)})}\sqrt{n-2k}}>\frac{1}{A}\text{ or }\sigma^{(U_{(k)},U_{(n-k+1)})}=0\right\}}.\end{align}
where $A$ is universal. Now define, for a given $x>0$,
\begin{equation}\label{eq:defHnk}H_{n,k}(x):=\frac{\sqrt{n}}{\sigma}\,\left(\mu^{(U_{(k)},U_{(n-k+1)})} - \mu + \frac{xV_{n,k}}{\sqrt{n-2k}} - \frac{x\sigma}{\sqrt{n}}\right).\end{equation}
Equation (\ref{eq:conditionalCLT}) can be rewritten as:
\begin{align*} \nonumber&\left|\Pr{\left.\overline{X}_{n,k}>\mu+\frac{(x+H_{n,k}(x))\,\sigma}{\sqrt{n}} \right| U_{(k)},U_{(n-k+1)}} - (1-\Phi(x))\right|\\ \leq & \frac{A\Delta_{n,k}(1+x)^3}{\sigma^{(U_{(k)},U_{(n-k+1)})}\sqrt{n-2k}}\,(1-\Phi(x)) + {\bf 1}_{\left\{\frac{\Delta_{n,k}(1+x)^3}{\sigma^{(U_{(k)},U_{(n-k+1)})}\sqrt{n-2k}}>\frac{1}{A}\text{ or }\sigma^{(U_{(k)},U_{(n-k+1)})}=0\right\}}.\end{align*}
Now, the above is an inequality for a conditional probability given $U_{(k)},U_{(n-k+1)}$. For any given $x\geq 0$, the random variable $H_{n,k}(x)$ is a function of $U_{(k)},U_{(n-k+1)}$; therefore, we can apply the preceding with $x+H_{n,k}(x)$ replacing $x$ and obtain that, almost surely, 
\begin{align*} \nonumber&\left|\Pr{\left.\overline{X}_{n,k}>\mu+\frac{x\,\sigma}{\sqrt{n}} \right| U_{(k)},U_{(n-k+1)}} - (1-\Phi(x-H_{n,k}(x)))\right|\\ \leq & \frac{A\Delta_{n,k}(1+|x-H_{n,k}(x)|)^3}{\sigma^{(U_{(k)},U_{(n-k+1)})}\sqrt{n-2k}}\,(1-\Phi(x-H_{n,k}(x))) + {\bf 1}_{\left\{\frac{\Delta_{n,k}(1+|x-H_{n,k}(x)|)^3}{\sigma^{(U_{(k)},U_{(n-k+1)})}\sqrt{n-2k}}>\frac{1}{A}\text{ or }\sigma^{(U_{(k)},U_{(n-k+1)})}=0\right\}},\end{align*}

To finish this chain of inequalities, note that, if $3|H_{n,k}(x)|\max\{x,1\}\leq 1$, then the Gaussian tail perturbation bound in Proposition \ref{prop:tailgaussian} of the Appendix gives:
\[e^{-3|H_{n,k}(x)|\max\{x,1\}}\leq \frac{1-\Phi(x-H_{n,k}(x))}{1-\Phi(x)}\leq e^{3|H_{n,k}(x)|\max\{x,1\}},\]
and thus 
\[\frac{|1-\Phi(x-H_{n,k}(x))) - (1-\Phi(x))|}{1-\Phi(x)}\leq e^{3|H_{n,k}(x)|\max\{x,1\}}-1\leq 3\,(e-1)\,|H_{n,k}(x)|\max\{x,1\}.\]
Therefore, 
\[\left|\Pr{\left.\overline{X}_{n,k}>\mu+\frac{x\sigma}{\sqrt{n}}\right| U_{(k)},U_{(n-k+1)}} - (1-\Phi(x))\right|\leq  R_n(x)\,(1-\Phi(x)) + I_n(x)\]
where 
\begin{eqnarray}\label{eq:defRn(x)}R_n(x)&:=&\frac{A\Delta_{n,k}(1+|x-H_{n,k}(x)|)^3}{\sigma^{(U_{(k)},U_{(n-k+1)})}\sqrt{n-2k}}+3\,(e-1)\,|H_{n,k}(x)|\max\{x,1\} \mbox{ and }\\ 
\label{eq:defIn(x)} I_n(x)&:=& {\bf 1}_{\left\{R_n(x)>1\text{ or }\sigma^{(U_{(k)},U_{(n-k+1)})}=0\right\}}\end{eqnarray}
are both deterministic functions of $(x,U_{(k)},U_{(n-k+1)})$. This is a bound of the form discussed in the proof outline; see (\ref{eq:desiredboundCLTgeneral}).\\ 
 
\noindent\underline{Step 2: The good event.} Define $\Xi:=1-U_{(n-k+1)}+U_{(k)}$. Let {\sf Good} be the event where the following three inequalities hold. 
\begin{eqnarray}\label{eq:conditionXiGoodCLT}\Xi&\leq & \xi_*:=(\sqrt{2}+1)^2\,\frac{k}{n};\\ \label{eq:conditionDeltaGoodCLT}\Delta_{n,k}&\leq & 12v\,\sigma\,\sqrt{\frac{n}{k}};\\ \label{eq:conditionVGoodCLT}
\left|V_{n,k}-\sigma^{(U_{(k)},U_{(n-k+1)})}\right| &\leq & \frac{k\Delta_{n,k}}{\sqrt{n-2k}}+ \frac{k^2\,\Delta^2_{n,k}}{12n\,\sigma^{(U_{(k)},U_{(n-k+1)})}}. 
\end{eqnarray}


Our next goal is to show that the occurrence of this event will allow us to control the random variables $H_{n,k}(x)$, $R_n(x)$ and $I_n(x)$ that appeared in Step 1. In doing this, we will use $C_0$ to denote a universal constant whose value may change from line to line. We will also assume (as we may) that the constant $C>0$ in our assumptions is suitably large (in particular, we assume $C>4$ for the first inequality below). 

For the remainder of this step, assume {\sf Good} holds. We may deduce from Propositions \ref{prop:controltrimmedmeanpop} and \ref{lem:varboundmoments} that
\[\frac{|\mu-\mu^{(U_{(k)},U_{(n-k+1)})}|}{\sigma}\leq \frac{\rho_{F,2}(\xi_*)\sqrt{\xi_*}}{1-\xi_*}\leq C_0\,v\,\sqrt{\frac{k}{n}}\leq \frac{1}{C}<1/4.\]
By the same kind of reasoning, \[\left|\frac{\sigma^{(U_{(k)},U_{(n-k+1)})}}{\sigma}-1\right| \leq \max\left\{\left|\frac{1}{1-\xi_*}-1\right|, \left|1 - \sqrt{\frac{1-\left(\frac{2-\xi_*}{1-\xi_*}\right)\,\rho^2_{F,2}(\xi_*)}{1-\xi_*}}\right|\right\} \leq C_0\,\left(v + \frac{k}{n}\right)<1/4.\]
In particular, the above implies that $\sigma^{(U_{(k)},U_{(n-k+1)})}\geq 3\sigma/4>0$. From (\ref{eq:conditionDeltaGoodCLT}) and (\ref{eq:conditionVGoodCLT}) and our assumption that $v\sqrt{k}\leq 1/(1+x)C$, we also obtain:
\[\frac{\left|V_{n,k} - \sigma^{(U_{(k)},U_{(n-k+1)})}\right|}{\sigma}\leq C_0\,(v\sqrt{k}).\]

The upshot of this discussion is that the quantity $H_{n,k}(x)$ defined in (\ref{eq:defHnk}) satisfies
\begin{align*}|H_{n,k}(x)| & \leq \left|\frac{\sqrt{n}(\mu^{(U_{(k)},U_{(n-k+1)})}-\mu)}{\sigma}\right|  + \left| \frac{x\sqrt{n}\,(V_{n,k}-\sigma^{(U_{(k)},U_{(n-k+1)})})}{\sigma\sqrt{n-2k}}\right| +\left| \frac{x\sigma^{(U_{(k)},U_{(n-k+1)})}}{\sigma\sqrt{1-2k/n}} -x\right|\\
&\leq C_0\,\left((1+x)\,v\sqrt{k} + \frac{x\,k}{n}\right).
\end{align*}
Now, 
\[\max\{x,1\}\,C_0\,\left((1+x)\,v\sqrt{k} + \frac{x\,k}{n}\right)\leq C_0\left((1+x)^2v\sqrt{k} + \frac{(1+x)^2\,k}{n}\right).\]
Therefore, when {\sf Good} holds, $|H_{n,k}(x)|\leq C_0(1+x)$ and 
\begin{align}\nonumber R_n(x) & = \frac{A\Delta_{n,k}(1+|x-H_{n,k}(x)|)^3}{\sigma^{(U_{(k)},U_{(n-k+1)})}\sqrt{n-2k}}+3\,(e-1)\,|H_{n,k}(x)|\max\{x,1\} \\ \label{eq:defeta} & \leq \eta:=C_0\left(\frac{(1+x)^3v}{\sqrt{k}} + (1+x)^2v\sqrt{k} + \frac{(1+x)^2\,k}{n}\right).\end{align}
By our assumptions on $v,k,x$ and $n$ -- and recalling that we assume $C>0$ to be sufficiently large --, we obtain $\eta\leq 1$ under {\sf Good}. Therefore, when {\sf Good} holds,
\[R_n(x)\leq \eta\mbox{ and }I_n(x)=0,\]
as desired.

\noindent\underline{Step 3: Proof of (\ref{eq:1ststatementGaussian}).} Using (\ref{eq:desiredboundinfullCLTgeneral}), we may finish by showing $\Pr{{\sf Good}^c}\leq 3e^{-k}$.
To do this, we note that ${\sf Good}^c$ is the event that one of the inequalities (\ref{eq:conditionXiGoodCLT}), (\ref{eq:conditionDeltaGoodCLT}) and (\ref{eq:conditionVGoodCLT}) does not hold. Moreover,
\begin{eqnarray*}\Pr{\mbox{(\ref{eq:conditionXiGoodCLT}) does not hold}}&\leq & e^{-k}\mbox{ by Proposition \ref{prop:xi}};\\ \Pr{\mbox{(\ref{eq:conditionDeltaGoodCLT}) does not hold}}&\leq & e^{-k}\mbox{ by Corollary \ref{cor:Delta} and assumption (\ref{eq:conditionv1preciseGaussian})}; \mbox{ and} \\
\Pr{\mbox{(\ref{eq:conditionVGoodCLT}) does not hold}} &\leq & 2e^{-\frac{k^2}{2}}\mbox{ by Theorem \ref{thm:bernsteinvariance}},
\end{eqnarray*}
and the fact that $k\geq 2$ guarantees the desired bound. \\

\noindent\underline{Step 4: The full proof.} We now sketch the final argument for the proof of the bounds involving the trimmed standard deviation $\hat{\sigma}_{n,k}$. Going back to Step 1, notice that
\[\hat{\sigma}^2_{n,k} = V_{n,k}^2 - (\overline{X}_{n,k}-\mu^{(U_{(k)},U_{(n-k+1)})})^2.\]
The standard Bernstein inequality can be applied to obtain that
\[\Pr{\left.|\overline{X}_{n,k}-\mu^{(U_{(k)},U_{(n-k+1)})}|\leq \sqrt{\frac{2k}{n}}\,\sigma^{(U_{(k)},U_{(n-k+1)})} + \frac{k\Delta_{n,k}}{n}\right| U_{(k)},U_{(n-k+1)}}\geq 1- e^{-k}.\]
In particular, with probability $1-4e^{-k}$, both the above event and {\sf Good} hold simultaneously. Call this new event ${\sf Good}_+$. It is a simple exercise (that we omit) to argue that, when ${\sf Good}_+$ holds, both $\hat{\sigma}_{n,k}/\sigma$ and $V_{n,k}/\sigma$ are sufficiently close to $1$ that the same perturbation arguments we applied previously are still valid with $\hat{\sigma}_{n,k}$ replacing $\sigma$ in all calculations. \end{proof}

\subsection{Proof of Theorem \ref{thm:preciseconfidence}}\label{sub:proof:preciseconfidence}

\begin{proof}We want to obtain this result as a consequence of Theorem \ref{thm:generalpreciseGaussian} above. This will require that we show that our choices of $k,v$ satisfy the conditions (\ref{eq:conditionv1preciseGaussian}) to (\ref{eq:conditionx3preciseGaussian}) of that theorem. 

Our choice of $k=k_*\geq \lceil \log(4/\delta\,(1-\Phi(x)))\rceil$ guarantees $4e^{-k_*}\leq \delta\,(1-\Phi(x))$ for large $n$. We will take $v=v_*=6\kappa_{2,p}(k_*/n)^{(p-2)/(4p-4)}$ as in Proposition \ref{prop:valueofv_*} with this choice of $k_*$, which guarantees that inequalites (\ref{eq:conditionsharper1}) and (\ref{eq:conditionsharper2}) are satisfied. Notice that these two inequalities are precisely the same as (\ref{eq:conditionv1preciseGaussian}) and (\ref{eq:conditionv2preciseGaussian}) in Theorem \ref{thm:generalpreciseGaussian}. 

To check the other three conditions, first notice that, when $x>0$, $k_*\geq c + x^2/2$ for some universal $c\in\R$. Therefore, $(1+x)\leq L\,\sqrt{k_*}$ for some universal $L>0$. This shows that, in order to satisfy (\ref{eq:conditionxpreciseGaussian}), it suffices to require that $v_*\,k_*\leq 1/B$ for some universal $B>0$, which is implied by our assumption \[\gamma= \kappa_{2,p}\,\frac{k_*^{\frac{7p-8}{4p-4}}}{n^{\frac{p-2}{4p-4}}}<\frac{1}{C}.\] 
Note that this assumption also implies that 
\[\left(\frac{k_*^2}{n}\right)^{\frac{p-2}{4p-4}}\leq \frac{k_*^{\frac{3}{2}+\frac{p-2}{4p-4}}}{n^{\frac{p-2}{4p-4}}}\leq \gamma<\frac{1}{C},\]
so that 
\[\frac{k_*^2}{n}\leq \frac{\gamma^{\frac{4p-4}{p-2}}}{C^{\frac{4p-4}{p-2}}}\leq \gamma<\frac{1}{C}\]
as $4p-4>p-2$ always (here we have assumed, as we may, that the universal constant $C$ is at least $1$).
 
This can be used to check the other conditions of Theorem \ref{thm:generalpreciseGaussian}, up to further adjustments in $C$, since $(1+x)k_*/n\leq Lk_*^{3/2}/n$  and $(1+x)^3v_*/\sqrt{k_*}\leq L^3v_*k_*\leq L'\gamma$ (with $L,L'>0$ universal in both cases).

To finish the proof, we apply Theorem \ref{thm:generalpreciseGaussian}, noting that the error parameter $\eta$ in the theorem is at most of order
\[\frac{(1+x)^3v_*}{\sqrt{k_*}} + (1+x)^2v\sqrt{k_*} + \frac{(1+x)^2k_*}{n}\leq (L''\,\kappa_{2,p})\frac{k_*^{\frac{7p-8}{4p-4}}}{n^{\frac{p-2}{4p-4}}} + L''\,\frac{k_*^2}{n}\leq C\gamma,\]
where $L''>0$ is another universal constant, and we adjust $C>0$ if needed to guarantee that the desired result holds.\end{proof}



\section{Moment-based bounds under contamination}\label{sec:proof:minimaxcontaminated}

In this section, we prove Theorem \ref{thm:minimaxcontaminated}: that is, we show that the trimmed mean can achieve minimax-optimal rates under contamination.

\begin{proof}[Proof of Theorem \ref{thm:minimaxcontaminated}]

To keep the notation similar to previous proofs, we take $x^2/2:=\log(4/\alpha)$ and obtain the following bound in terms of $x$.

\begin{equation}\label{eq:goalminimaxrestated}{\bf Goal:}\;\; \Pr{\left|\overline{X^\epsilon}_{n,k}-\mu\right|>C(d)\,\left(\nu_p\,\epsilon^{\frac{p-1}{p}} + \nu_q\,\frac{x^{\frac{2(p-1)}{p}}}{n^{\frac{p-1}{p}}}\right)\}\cap {\sf Good}}\leq 4e^{-\frac{x^2}{2}}.\end{equation}

We work under the framework of Proposition \ref{prop:richer}. That is, we assume that $X_i=F^{-1}(U_i)$ and $X_{(i)}=F^{-1}(U_{(i)})$ for each $i\in[n]$, where $\{U_i\}_{i=1}^n$ are i.i.d.\ uniform over $(0,1)$. 

A crucial step of the proof will be to use Proposition \ref{prop:effectcontamination} to relate $\overline{X^\epsilon}_{n,k}$ to an asymmetrically trimmed mean over the contaminated sample. Specifically, the proposition implies:
\[\overline{X}_{n,\lceil x^2\rceil,\lfloor 2\epsilon n\rfloor + \lceil x^2\rceil}\leq \overline{X^\epsilon}_{n,\lfloor \epsilon n\rfloor + \lceil x^2\rceil}\leq \overline{X}_{n,\lfloor 2\epsilon n\rfloor + \lceil x^2\rceil,\lceil x^2\rceil}.\]

It follows that 
\begin{equation}\label{eq:minimaxtwobounds}|\overline{X^\epsilon}_{n,\lfloor \epsilon n\rfloor + \lceil x^2\rceil}-\mu|\leq \max\{\overline{X}_{n,\lfloor 2\epsilon n\rfloor + \lceil x^2\rceil,\lceil x^2\rceil}-\mu,\mu-\overline{X}_{n,\lceil x^2\rceil,\lfloor 2\epsilon n\rfloor + \lceil x^2\rceil}\}.\end{equation}
For the remainder of the proof, we will focus on showing that there exists an event {\sf Good} with $\Pr{{\sf Good}^c}\leq 2e^{-x^2}$ such that
\begin{equation}\label{eq:goalminimax}\Pr{\left\{\overline{X}_{n,\lfloor 2\epsilon n\rfloor + \lceil x^2\rceil,\lceil x^2\rceil}-\mu>C(d)\,\left(\nu_p\,\epsilon^{\frac{p-1}{p}} + \nu_q\,\frac{x^{\frac{2(p-1)}{p}}}{n^{\frac{p-1}{p}}}\right)\right\}\cap {\sf Good}}\leq e^{-\frac{x^2}{2}}.\end{equation}
The same proof (with trivial modifications) shows that 
\[\Pr{\left\{\overline{X}_{n,\lceil x^2\rceil,\lfloor 2\epsilon n\rfloor + \lceil x^2\rceil}-\mu<-C(d)\left(\nu_p\,\epsilon^{\frac{p-1}{p}} + \nu_q\,\frac{x^{\frac{2(p-1)}{p}}}{n^{\frac{p-1}{p}}}\right)\right\}\cap {\sf Good}}\leq e^{-\frac{x^2}{2}}\]
and these two bounds, combined with the bound on $\Pr{{\sf Good}^c}$, imply the Theorem.

The proof of  (\ref{eq:goalminimax}) follows the general outline of Theorem \ref{thm:generalfinitevariance}. Setting $k_1=\lfloor 2\epsilon n\rfloor + \lceil x^2\rceil$, $k_2=\lceil x^2\rceil$, we apply Bernstein's inequality conditionally to $\overline{X}_{n,k_1,k_2}$ to obtain:
\begin{equation}\label{eq:bernsteinminimax}\Pr{\left.\overline{X}_{n,k_1,k_2}-\mu^{(U_{(k_1)},U_{(n-k_2+1)})}>\frac{x\sigma^{(U_{(k_1)},U_{(n-k_2+1)})}}{\sqrt{n-k_1-k_2}} + \frac{x^2\Delta_{n,k_1,k_2}}{12(n-k_1-k_2)}\right|(U_{(k_1)},U_{(n-k_2+1)})}\leq e^{-\frac{x^2}{2}}.\end{equation} 


Consider the event ${\sf Good}$ where all of the following inequalities holds:
\begin{eqnarray}\label{eq:conditionGoodXi}\Xi &\leq & \xi:=\frac{(\sqrt{k_1+k_2-1} + x)^2}{n},\\ \label{eq:conditionGoodDelta}
\Delta_{n,k_1,k_2}&\leq & 2\left(\frac{e^2n}{x^2}\right)^{\frac{1}{q}}\nu_q.\end{eqnarray} 

We now bound the quantities involved in (\ref{eq:bernsteinminimax}) under the event {\sf Good}. For that purpose, it is convenient to recall our assumption that 
\[(\sqrt{2\lfloor \epsilon n\rfloor +\lceil x^2\rceil} + x)^2=(\sqrt{k_1+k_2-1} + x)^2\leq dn\mbox{ with }d<1.\]
In what follows, we will  allow ourselves to write $C(d)$ for a positive constant depending only on $d$, whose exact value may change from line to line. For instance, this means that 
\[\frac{\xi^{\frac{p-1}{p}}}{1-\xi}\leq C(d)\,\left(\epsilon + \frac{x^2}{n}\right)^{\frac{p-1}{p}},\]
a fact that we will readily use. 

To continue, notice that, under {\sf Good}, by Proposition \ref{prop:controltrimmedmeanpop},
\[|\mu-\mu^{(U_{(k_1)},U_{(n-k_2+1)})}|\leq \frac{\nu_p\,\xi^{\frac{p-1}{p}}}{1-\xi}\leq C(d)\,\nu_p\,\epsilon^{\frac{p-1}{p}},\]
if $x\leq \epsilon n$; otherwise, we can similarly obtain a bound with $q$ replacing $p$ and  $x^2/n$ replacing $\epsilon$. In either case,
\[|\mu-\mu^{(U_{(k_1)},U_{(n-k_2+1)})}|\leq C(d)\,\left(\nu_p\,\epsilon^{\frac{p-1}{p}} + \nu_q\,\frac{x^{\frac{2(p-1)}{p}}}{n^{\frac{p-1}{p}}}\right).\]
By (\ref{eq:conditionGoodDelta}), we also have
\[\frac{x^2\Delta_{n,k_1,k_2}}{12\,(n-k_1-k_2)}\leq  C(d)\,\frac{x^{\frac{2(q-1)}{q}}}{n^{\frac{q-1}{q}}}\,\nu_q.\]
Additionally, Proposition \ref{lem:varboundmoments} gives
\begin{eqnarray*} \frac{x\sigma^{(U_{(k_1)},U_{(n-k_2+1)})}}{\sqrt{n-k_1-k_2}}&\leq & C(d)\,\frac{x\,\nu_q^{\frac{q}{2}}\Delta_{n,k_1,k_2}^{1-\frac{q}{2}}}{\sqrt{n}}\\ & = &C(d)  \left(\frac{x^2\Delta_{n,k_1,k_2}}{n}\right)^{1-\frac{q}{2}}\,\left(\nu_q\,\frac{x^{\frac{2(q-1)}{q}}}{n^{\frac{q-1}{q}}}\right)^{\frac{q}{2}}\\ \mbox{(use (\ref{eq:conditionGoodDelta}))} & \leq & C(d)\,\frac{x^{\frac{2(q-1)}{q}}}{n^{\frac{q-1}{q}}}\,\nu_q.\end{eqnarray*}
The upshot of these calculations is that
\[\Pr{\left\{|\overline{X}_{n,k_1,k_2}-\mu|>C(d)\,\left(\frac{x^{\frac{2(q-1)}{q}}}{n^{\frac{q-1}{q}}}\,\nu_q + \nu_p\,\epsilon^{\frac{p-1}{p}}\right)\right\}\cap {\sf Good}}\leq 2e^{-\frac{x^2}{2}},\]
as claimed. 

To finish, we need to show that $\Pr{{\sf Good}^c}\leq 2e^{-x^2/2}$. Since ${\sf Good}^c$ is the event where either (\ref{eq:conditionGoodXi}) of (\ref{eq:conditionGoodDelta}) do 
not hold, it suffices to bound the corresponding probabilities individually by $e^{-x^2/2}$. Indeed, this can be done via Propositions \ref{prop:xi} (with $t=x^2/2$) and \ref{prop:Deltasize} (with $k_{\min} = \lceil x^2/2\rceil\geq x^2/2$), respectively.\end{proof}

\section{Some illustrative experiments}\label{sec:experiments}

This section briefly compares the behavior of three estimators: the Catoni estimator from \cite{catoni2012challenging} with $\beta=1$, the sample mean and the trimmed mean with fixed $k=6$. 

Both the Catoni and the trimmed mean estimator enjoy added degrees of robustness relative to the sample mean, so we expect them to perform significantly better as the data comes from more challenging distribution. Below, we consider a $t$-distribution with degrees of freedom $\nu=1, 1.5, 2$ and $2.5$ --- the larger the degree of freedom, the less heavy-tailed the distribution becomes. Due to the added computational burden incurred by the Catoni estimator, we construct each estimator over $n=1000$ observations, and repeat the procedure $r=100$ times to obtain the violin plot. 

\begin{figure}[htpb]
  \centering
  \includegraphics[width=\textwidth]{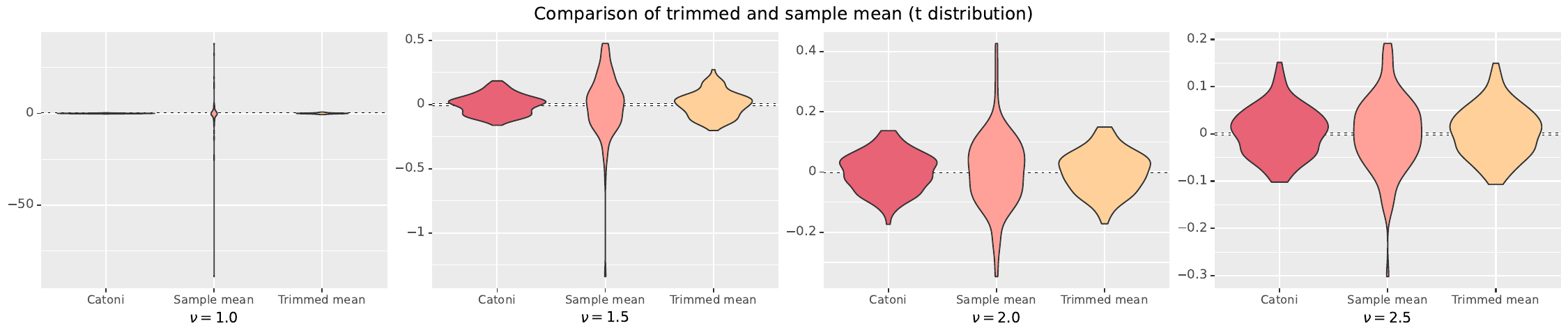}
  \caption{Violin plot for the three estimators under $t$ distributions with different parameters.
  \label{fig:violin-t}}
\end{figure}

Figure \ref{fig:violin-t} displays the observed histogram of each estimator through a violin graph, where a higher concentration around $0$ is better. It is clear that, for lower values of the degree of freedom, the sample mean is highly influenced by large but rare sample points; both the trimmed mean and the Catoni estimator are immune. As the degree of freedom increases, the three estimators become more similar, and more concentrated around the true mean of zero. We also note that the Catoni estimator seems to perform better than the trimmed mean in this case, although it is computationally much more expensive and does not have associated confidence intervals, such as the ones we propose for the trimmed means in this paper.

\appendix

\section{Some auxiliary technical results}

\subsection{Concentration of order statistics of uniforms}
\begin{lemma}[Upper tail concentration of order statistics]\label{lem:orderstats} Let $U_{(1)}\leq U_{(2)}\leq \dots\leq U_{(n)}$ be the order statistics of an i.i.d.\ ${\rm Uniform}[0,1]$ random sample. Then for all $k\in [n]$ and $t>0$:\end{lemma}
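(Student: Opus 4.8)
The plan is to convert the statement into a lower-tail bound for a binomial random variable and then apply a Chernoff estimate. Since $U_{(k)}>s$ holds precisely when fewer than $k$ of the sample points lie in $[0,s]$, and $\#\{i\in[n]:U_i\le s\}\sim\mathrm{Bin}(n,s)$ for $s\in[0,1]$, I would begin by recording
\[\Pr{U_{(k)}>s}=\Pr{\mathrm{Bin}(n,s)\le k-1}\qquad(0\le s\le 1),\]
while for $s\ge 1$ the left-hand side vanishes. Hence it suffices to prove the claimed bound for $s=(\sqrt{k-1}+\sqrt t)^2/n$, and we may assume $s<1$. The case $k=1$ is immediate since $\Pr{U_{(1)}>s}=(1-s)^n\le e^{-ns}=e^{-t}$, so I would henceforth take $m:=k-1\ge 1$.

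Next I would apply the standard exponential-moment bound: for any $\theta>0$,
\[\Pr{\mathrm{Bin}(n,s)\le m}\le e^{\theta m}\bigl(1-s+se^{-\theta}\bigr)^n\le \exp\bigl(\theta m+\mu(e^{-\theta}-1)\bigr),\qquad \mu:=ns,\]
and optimize over $\theta$ (the minimizer is $\theta=\ln(\mu/m)$, legitimate because $\mu=(\sqrt m+\sqrt t)^2>m$). This yields
\[\Pr{\mathrm{Bin}(n,s)\le m}\le \exp\bigl(-\mu\,h(m/\mu)\bigr),\qquad h(u):=u\ln u-u+1,\]
with the convention $0\ln 0=0$ (so the bound also reproduces the $k=1$ case, where $\mu\,h(0)=\mu=t$). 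The lemma therefore reduces to the purely deterministic inequality $\mu\,h(m/\mu)\ge t$ with $\mu=(\sqrt m+\sqrt t)^2$.

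To establish this I would substitute $z:=\sqrt m/(\sqrt m+\sqrt t)\in(0,1)$, so that $m/\mu=z^2$ and $t=\mu(1-z)^2$; the desired inequality becomes $h(z^2)\ge(1-z)^2$. Expanding both sides, using $\ln(z^2)=2\ln z$, and cancelling a factor $2z>0$, this is equivalent to $z\ln z\ge z-1$, i.e.\ to $h(z)\ge 0$ --- which holds on $(0,1]$ since $h(1)=0$ and $h'(z)=\ln z\le 0$. Reversing the substitutions completes the argument.

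The only point requiring a little care is this last algebraic reduction: the cruder relaxation $h(u)\ge (1-u)^2/2$ (equivalently, the multiplicative Chernoff bound) is insufficient, as it only gives $\mu\,h(m/\mu)\ge t$ in the regime $t\le 2m$; one genuinely needs to keep the function $h$ and exploit the elementary inequality $h\ge 0$. The remaining ingredients --- the binomial identity, the Chernoff optimization, and the boundary cases $s\ge 1$ and $k=1$ --- are routine.
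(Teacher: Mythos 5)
Your argument correctly establishes the first inequality of the lemma, $\Pr{U_{(k)}>(\sqrt{k-1}+\sqrt{t})^2/n}\le e^{-t}$, but by a genuinely different route from the paper. The paper reduces to the same binomial identity $\Pr{U_{(k)}>\lambda}=\Pr{\mathrm{Bin}(n,\lambda)\le k-1}$ and then simply cites Okamoto's square-root inequalities for the binomial, namely $\Pr{\sqrt{\mathrm{Bin}(n,\lambda)/n}<\sqrt{\lambda}-c}\le e^{-c^2 n}$ and its upper-tail analogue with exponent $2c^2n$. You instead run the Chernoff optimization from scratch, arriving at $\exp(-\mu\,h(m/\mu))$ with $h(u)=u\ln u-u+1$, and then verify the deterministic inequality $\mu\,h(m/\mu)\ge t$ via the substitution $z=\sqrt m/(\sqrt m+\sqrt t)$; I checked the reduction to $h(z^2)\ge(1-z)^2$ and then to $z\ln z\ge z-1$, and it is correct, as are your treatments of the boundary cases $k=1$ and $s\ge1$ and your remark that the quadratic relaxation of $h$ would not suffice. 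In effect you have re-proved the lower-tail Okamoto inequality rather than quoting it; your version is self-contained, at the cost of the extra algebra, while the paper's is a two-line citation.

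There is, however, an omission: the lemma has two displayed bounds, and you prove only the first. The second asserts $\Pr{U_{(k)}<(\sqrt{k}-\sqrt{t})^2/n}\le e^{-2t}$ (together with the symmetry identities $\Pr{U_{(k)}\in A}=\Pr{1-U_{(n-k+1)}\in A}$, which are immediate from $u\mapsto 1-u$ and which you also do not mention). This second bound is an upper-tail binomial estimate, $\Pr{\mathrm{Bin}(n,\lambda)\ge k}$ with $\lambda=(\sqrt k-\sqrt t)^2/n$, and the improved constant $2$ in the exponent does not fall out of your computation as stated: the same Chernoff bound gives $\exp(-\mu\,h(k/\mu))$ with $k/\mu>1$, and one must then check the sharper deterministic inequality $h(z^2)\ge 2(z-1)^2$ for $z\ge 1$ (which does hold --- the derivative of the difference is $4h(z)\ge0$ --- but is a separate verification, and is exactly the content of the stronger of Okamoto's two inequalities). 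So your proposal should either be completed with this second computation or should cite the upper-tail Okamoto bound as the paper does.
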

\begin{eqnarray}\label{eq:lower}\Pr{U_{(k)}>\frac{(\sqrt{k-1}+\sqrt{t})^2}{n}} = \Pr{1-U_{(n-k+1)}>\frac{(\sqrt{k-1}+\sqrt{t})^2}{n}} &\leq &   e^{-t},\\ 
\label{eq:upper}\Pr{U_{(k)}<\frac{(\sqrt{k}-\sqrt{t})^2}{n}} = \Pr{1-U_{(n-k+1)}<\frac{(\sqrt{k}-\sqrt{t})^2}{n}} &\leq &   e^{-2t}.\end{eqnarray}

\begin{proof}The equality of the two probabilities in each line follows from the symmetry of the uniform distribution under the transformation ``$u\mapsto 1-u$."

We will use two bounds for the binomial distribution proven in \cite[Theorems 3 and 4]{Okamoto1958} (see also \cite[Exercise 2.13]{boucheron2013concentration}): for all $c>0$, 
\begin{eqnarray*}\Pr{\sqrt{\frac{{\rm Binomial}(n,\lambda)}{n}}<\sqrt{\lambda}  - c} &\leq & e^{-c^2n},\\
\Pr{\sqrt{\frac{{\rm Binomial}(n,\lambda)}{n}}>\sqrt{\lambda} + c} &\leq & e^{-2c^2n}.\end{eqnarray*}

Now, for any $\lambda\in(0,1)$, $U_{(k)}>\lambda$ if and only if the number of $i\in [n]$ with $U_i\leq \lambda$ is less than $k$. This gives:
\[\forall \lambda\in (0,1)\,:\, \Pr{U_{(k)}>\lambda} = \Pr{\sum_{i=1}^n \mathbb{I}\{U_i\leq \lambda\}\leq k-1} = \Pr{{\rm Binomial}(n,\lambda)\leq k-1}.\]
For $\lambda\geq (k-1)/n$,
\[\Pr{{\rm Binomial}(n,\lambda)\leq k-1} = \Pr{\sqrt{\frac{{\rm Binomial}(n,\lambda)}{n}}<\sqrt{\lambda} - (\sqrt{\lambda} - \sqrt{\frac{k-1}{n}})}\leq e^{-(\sqrt{\lambda n} - \sqrt{k-1})^2}.\]
Taking:
\[\lambda=\frac{(\sqrt{k-1} + \sqrt{t})^2}{n}\]
as in the statement of the Lemma gives us (\ref{eq:lower}). For (\ref{eq:upper}), we note that $U_{(n-k+1)}>1-\lambda$ if there are at least $k$ points $U_i\in [1-\lambda,1]$. Using \cite[Theorem 3]{Okamoto1958}:
\[\Pr{\sum_{i=1}^n \mathbb{I}\{U_i>1- \lambda\}\geq k} = \Pr{\sqrt{\frac{{\rm Binomial}(n,\lambda)}{n}}>\sqrt{\lambda} + (\sqrt{\frac{k}{n}} - \sqrt{\lambda})}\leq e^{-2(\sqrt{k} - \sqrt{\lambda n})^2}.\]
The choice of $\lambda = (\sqrt{k}-\sqrt{t})^2/n$ gives us (\ref{eq:upper}).\end{proof}

\subsection{Perturbation bounds for the tail of the Gaussian}

The next result is a perturbation bound for the tails of the Gaussian distribution. In what follows, $\Phi$ is the standard Gaussian c.d.f.:
\[\Phi(x):=\int_{-\infty}^x\,\frac{e^{-\frac{t^2}{2}}}{\sqrt{2\pi}}\,dt\,\,\,(x\in\R).\]

\begin{proposition}\label{prop:tailgaussian} Let $x\geq 0$ and $h\in\R$ satisfy $|h|\leq (1/3\max\{x,1\})$. Then
\[\exp\left(-3|h|\max\{x,1\}\right)\leq \frac{1-\Phi(x+h)}{1-\Phi(x)}\leq \exp\left(3|h|\max\{x,1\}\right).\]
\end{proposition}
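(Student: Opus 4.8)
The plan is to reduce everything to controlling the logarithmic derivative of the Gaussian tail $G(x):=1-\Phi(x)$, i.e. the hazard rate $\lambda(x):=\varphi(x)/(1-\Phi(x))$, where $\varphi=\Phi'$. Since $\log G(x+h)-\log G(x)=-\int_x^{x+h}\lambda(t)\,dt$, the bound $\exp(-3|h|\max\{x,1\})\le G(x+h)/G(x)\le \exp(3|h|\max\{x,1\})$ follows immediately once we show that $\lambda(t)\le 3\max\{x,1\}$ for all $t$ in the interval between $x$ and $x+h$. Because $|h|\le 1/(3\max\{x,1\})\le 1/3$, this interval is contained in $[x-1/3,\,x+1/3]$, so it suffices to prove $\lambda(t)\le 3\max\{x,1\}$ whenever $|t-x|\le 1/3$ (and $x\ge 0$).

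The key step is therefore a pointwise bound on the Gaussian hazard rate. I would split into two regimes. For $t\le 1$ (which covers all relevant $t$ when $x\le 2/3$, say), one has the crude bound $\lambda(t)=\varphi(t)/(1-\Phi(t))\le \varphi(0)/(1-\Phi(1))$, a universal constant that is comfortably below $3=3\max\{x,1\}$ for $x\le 1$; a slightly more careful estimate shows $\lambda(t)\le 3$ for all $t\le 1$. For $t\ge 1$, I would use the standard Gaussian tail inequality $1-\Phi(t)\ge \varphi(t)\cdot t/(1+t^2)$ (equivalently $1-\Phi(t)\ge (t/(1+t^2))\varphi(t)$, which follows from integrating $\varphi$ and the bound $(1+1/s^2)e^{-s^2/2}$ having an explicit antiderivative), giving $\lambda(t)\le (1+t^2)/t = t + 1/t \le t+1 \le \tfrac{3}{2}t$ for $t\ge 1$. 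Then, for $t$ with $|t-x|\le 1/3$ and $x\ge 1$, we get $t\le x+1/3\le 2x$, hence $\lambda(t)\le \tfrac{3}{2}\cdot 2x = 3x = 3\max\{x,1\}$; and if $x<1$ but $t\ge 1$, then $t\le x+1/3<4/3$, so $\lambda(t)\le \tfrac{3}{2}\cdot\tfrac{4}{3}=2\le 3=3\max\{x,1\}$. Combining the two regimes yields $\lambda(t)\le 3\max\{x,1\}$ throughout the interval of integration.

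Finally, assembling: for the upper bound, $\log(G(x+h)/G(x)) = -\int_x^{x+h}\lambda(t)\,dt \le |h|\sup_t \lambda(t)\le 3|h|\max\{x,1\}$ when $h<0$, and is $\le 0$ when $h\ge 0$; in either case it is at most $3|h|\max\{x,1\}$. Symmetrically, the integral is at least $-3|h|\max\{x,1\}$, giving the lower bound after exponentiating. The main obstacle is purely the bookkeeping in the hazard-rate estimate: getting a clean constant (here $3$) that works uniformly over the shifted interval requires care in how one relates $t$ to $\max\{x,1\}$ when $x$ is near $1$, but the Gaussian tail bound $1-\Phi(t)\ge (t/(1+t^2))\varphi(t)$ does all the real work and no deep input is needed.
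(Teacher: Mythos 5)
Your proof is correct in substance but takes a genuinely different route from the paper. The paper argues by cases: for $x<1$ it uses that $1-\Phi(x)\geq 1/3$ together with the Lipschitz constant of $\Phi$; for $x>1$ and $h\geq 0$ it invokes an external lemma giving the exact representation $\frac{1-\Phi(x+h)}{1-\Phi(x)}=\exp(-hx-h^2/2-\eta)$ with $\eta\in(0,h/x)$; and for $h<0$ it reflects back to the previous case via $x'=x-h$. Your argument instead writes $\log\frac{1-\Phi(x+h)}{1-\Phi(x)}=-\int_x^{x+h}\lambda(t)\,dt$ for the hazard rate $\lambda=\varphi/(1-\Phi)$ and bounds $\lambda$ pointwise on the integration interval using the Mills-ratio inequality $1-\Phi(t)\geq \frac{t}{1+t^2}\varphi(t)$. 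This is self-contained (no citation needed), handles positive and negative $h$ symmetrically in one stroke, and is arguably cleaner; the paper's route gives a sharper constant ($13/6$ rather than $3$) in the regime $x>1$, $h\geq 0$, at the cost of the external lemma and the three-way case split.

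One intermediate inequality in your hazard-rate estimate is wrong as stated: $t+1\leq \tfrac{3}{2}t$ holds only for $t\geq 2$, so the chain $\lambda(t)\leq t+1/t\leq t+1\leq \tfrac{3}{2}t$ fails for $t\in[1,2)$. This does not damage the conclusion, because the bound you actually need follows directly from $\lambda(t)\leq t+1$: for $x\geq 1$ and $|t-x|\leq 1/3$ one has $\lambda(t)\leq x+\tfrac{1}{3}+1\leq \tfrac{7}{3}x\leq 3x$, and for $x<1$, $t\in[1,4/3]$ one has $\lambda(t)\leq \tfrac{4}{3}+1=\tfrac{7}{3}\leq 3$. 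You should replace the $\tfrac{3}{2}t$ step with this direct computation. The remaining pieces --- the reduction to a supremum of $\lambda$ over an interval of length at most $1/3$ around $x$, the crude bound $\lambda(t)\leq \varphi(0)/(1-\Phi(1))<3$ for $t\leq 1$, and the sign analysis when exponentiating --- are all sound.
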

\begin{proof}We split the proof into three cases. \\

\noindent\underline{Case 1: $x<1$.} In this case $1-\Phi(x)\geq 1-\Phi(1)\geq 1-e^{-1/2}\geq 1/3$. Since the Gaussian c.d.f. is $L$-Lipschitz, with $L=(2\pi)^{-1/2}\leq 2$,
\[|1-\Phi(x+h) -(1-\Phi(x))|\leq \frac{|h|}{2},\]
and 
\[1 - \frac{3|h|}{2}\leq \frac{1-\Phi(x+h)}{1-\Phi(x)}\leq 1+\frac{3|h|}{2}.\]
We obtain the desired bound by noticing that $1+(3|h|/2)\leq e^{3|h|/2}$ and
\[e^{-3|h|}\leq 1-3|h| + \frac{(3|h|)^2}{2}\leq 1-\frac{3|h|}{2}\mbox{ for }0\leq |h|\leq 1/3.\]

\noindent\underline{Case 2: $x>1$ and $0\leq h\leq 1/x$.} Although the theorem only requires considering $x>1$ and $|h|\leq 1/3x$, it will turn out to be convenient to consider this wider range of $x,h$ in what follows. 

Lemma 16 in \cite{addarioberry2015} implies that, for any $x,h\geq 0$:
\[\frac{1-\Phi(x+h)}{1-\Phi(x)} = \exp\left(-hx-\frac{h^2}{2} -\eta\right)\]
for some $\eta\in (0,h/x)$. Since $x>1$ and $h\leq 1/3x$,
\[hx+\frac{h^2}{2}+\eta\leq hx + \frac{7h}{6x}\leq = \frac{13x}{6}.\]
We conclude that
\[e^{-13hx/6}\leq \frac{1-\Phi(x+h)}{1-\Phi(x)} \leq 1,\]
which is better than what we asked for.\\ 

\noindent\underline{Case 3: $x>1$ and $-(1/3x)\leq h\leq 0$.}
The idea will be to reapply the calculations of Case 2 with $x'=x-h$ replacing $x$ and $h'=|h|$ replacing $h$. To do this, we notice that $x'\geq x>1$ and 
\[x' =  x-h\leq x + \frac{1}{3x}\leq \frac{4x}{3}\leq 3x,\]
so $h'=|h|\leq 1/3x\leq 1/x'$. We deduce from Case 2 that
\[e^{-13h'x'/6}\leq \frac{1-\Phi(x'+h')}{1-\Phi(x')} \leq 1,\]
or equivalently
\[1\leq \frac{1-\Phi(x+h)}{1-\Phi(x)}\leq e^{13|h|(x+|h|)/6}\]
Since $|h|\leq 1/3x\leq x/3$, the exponent in the RHS is at most \[\frac{13|h|(x+|h|)}{6} \leq \frac{52\,|h|x}{18}\leq 3|h|x. \]\end{proof}

\subsection{Concentration of the empirical variance.}\label{sub:concentrationvariance}

The idea of the proofs of precise Gaussian approximation results is to combine self-normalized Central Limit Theorem with a concentration inequality for the empirical variance-like appearing in this result. In what follows, we present precisely this second inequality. 

\begin{theorem}\label{thm:bernsteinvariance}Consider i.i.d.\ random variables 
\[\{Z_i\}_{i=1}^n\mbox{ with $\Ex{Z_1}=\mu_Z,\,\Var{Z_1}\leq \sigma^2_Z$ and $|Z_1-\mu|\leq \Delta_Z$ almost surely.}\]
Define
\[V^2_n:=\frac{1}{n}\sum_{i=1}^n(Z_i-\mu_Z)^2\]
and
\[\Sigma^2_n:=\frac{1}{n}\sum_{i=1}^n(Z_i-\overline{Z}_n)^2,\]
where $\overline{Z}_n$ is the average of the $Z_i$ (cf. Theorem \ref{thm:bernstein}. Then: 
\[\Pr{\left|V_n -\sigma_Z\right|>\frac{z\Delta_Z}{\sqrt{n}}+ \frac{z^2\,\Delta^2_Z}{12n\,\sigma_Z}}\leq 2\exp\left(\frac{-z^2}{2}\right)\]
and 
\[\Pr{\left|\Sigma_n -\sigma_Z\right|>\frac{2z\Delta_Z}{\sqrt{n}}+ \frac{2z^2\,\Delta^2_Z}{12n\,\sigma_Z}}\leq 4\exp\left(\frac{-z^2}{2}\right).\]
\end{theorem}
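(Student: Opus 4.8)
The plan is to derive both bounds from Bernstein's inequality (Theorem~\ref{thm:bernstein}) applied to two different families of bounded i.i.d.\ variables, together with a couple of elementary algebraic manipulations. Throughout I would take $\sigma_Z^2=\Var{Z_1}$ and $\sigma_Z>0$ (if $\sigma_Z=0$ then $Z_1$ is a.s.\ constant and both statements are trivial), and record the fact that $\sigma_Z\le\Delta_Z$, which holds because $(Z_1-\mu_Z)^2\le\Delta_Z^2$ almost surely.

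For the first inequality, I would set $W_i:=(Z_i-\mu_Z)^2$. These are i.i.d.\ with $\Ex{W_1}=\sigma_Z^2$; since $0\le W_1\le\Delta_Z^2$ we get $|W_1-\sigma_Z^2|\le\Delta_Z^2$, and $\Var{W_1}\le\Ex{(Z_1-\mu_Z)^4}\le\Delta_Z^2\,\Ex{(Z_1-\mu_Z)^2}=\Delta_Z^2\sigma_Z^2$. Applying Theorem~\ref{thm:bernstein} to $\{W_i\}$ and to $\{-W_i\}$ (so that the role of the standard deviation is played by $\Delta_Z\sigma_Z$ and the role of the bound by $\Delta_Z^2$), and using $\overline{W}_n=V_n^2$, a union bound gives
\[\Pr{\left|V_n^2-\sigma_Z^2\right|>\frac{z\,\Delta_Z\,\sigma_Z}{\sqrt n}+\frac{z^2\Delta_Z^2}{12n}}\le 2\exp\!\left(\frac{-z^2}{2}\right).\]
I would then divide through by $V_n+\sigma_Z\ge\sigma_Z$, using the identity $|V_n-\sigma_Z|=|V_n^2-\sigma_Z^2|/(V_n+\sigma_Z)$, to obtain the first claimed bound.

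For the second inequality I would start from the identity $\Sigma_n^2=V_n^2-(\overline{Z}_n-\mu_Z)^2$. This immediately gives $\Sigma_n\le V_n$, and, via the elementary fact that $t^2-a^2\ge(t-a)^2$ whenever $t\ge a\ge 0$ (while $t-a<0\le\Sigma_n$ otherwise), it also gives $\Sigma_n\ge V_n-|\overline{Z}_n-\mu_Z|$; hence $|\Sigma_n-\sigma_Z|\le|V_n-\sigma_Z|+|\overline{Z}_n-\mu_Z|$. Applying Theorem~\ref{thm:bernstein} to $\{Z_i\}$ and $\{-Z_i\}$ bounds $|\overline{Z}_n-\mu_Z|$ by $z\sigma_Z/\sqrt n+z^2\Delta_Z/(12n)$ with probability at least $1-2\exp(-z^2/2)$; intersecting this with the event from the first part (whose complement has probability at most $2\exp(-z^2/2)$) and using $\sigma_Z\le\Delta_Z$ to absorb $z\sigma_Z/\sqrt n\le z\Delta_Z/\sqrt n$ and $z^2\Delta_Z/(12n)\le z^2\Delta_Z^2/(12n\sigma_Z)$, I arrive at exactly $|\Sigma_n-\sigma_Z|\le 2z\Delta_Z/\sqrt n+2z^2\Delta_Z^2/(12n\sigma_Z)$ off an event of probability at most $4\exp(-z^2/2)$.

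No step here is deep; the only places needing care are estimating $\Var{W_1}$ by $\Delta_Z^2\sigma_Z^2$ rather than the cruder $\Delta_Z^4$ (this is what produces the sharp leading term $z\Delta_Z/\sqrt n$ instead of $z\Delta_Z^2/\sqrt n$), the passage from concentration of $V_n^2$ to concentration of $V_n$, and the inequality $\Sigma_n\ge V_n-|\overline{Z}_n-\mu_Z|$, all of which are routine once written out. I would also reconcile the $\mu$ versus $\mu_Z$ notation in the hypothesis and read $\Var{Z_1}\le\sigma_Z^2$ as an equality here, since $V_n$ concentrates around $\sqrt{\Var{Z_1}}$.
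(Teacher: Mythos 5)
Your proposal is correct and follows essentially the same route as the paper: apply Bernstein to $W_i=(Z_i-\mu_Z)^2$ with the variance bound $\Var{W_1}\le\Delta_Z^2\sigma_Z^2$, divide by $V_n+\sigma_Z\ge\sigma_Z$ to pass from $V_n^2$ to $V_n$, and then use $\Sigma_n^2=V_n^2-(\overline{Z}_n-\mu_Z)^2$ together with Bernstein for $\overline{Z}_n$ and $\sigma_Z\le\Delta_Z$ for the second bound. Your explicit justification of $\Sigma_n\ge V_n-|\overline{Z}_n-\mu_Z|$ is a small detail the paper leaves implicit, but the argument is the same.
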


\begin{proof}$V_n^2 - \sigma_Z^2$ is an average of i.i.d.\ random variables $W_i:=(Z_i-\mu_Z)^2-\sigma_Z^2$ that have mean $0$, are bounded above by $\Delta^2$ and have variances bounded by
\[\Var{W_i}\leq \Ex{(Z_i-\mu_Z)^4}\leq \Delta_Z^2\sigma_Z^2.\]
Applying Bernstein's inequality to $\pm V_n^2$ gives:
\[\Pr{|V_n^2 - \sigma_Z^2|>\frac{z\sigma_Z\,\Delta_Z}{\sqrt{n}} + \frac{z^2\,\Delta^2_Z}{12n}}\leq 2\exp\left(\frac{-z^2}{2}\right).\]
This finishes the proof of the first statement in the Theorem because 
\[|V_n - \sigma_Z| = \frac{|V_n^2-\sigma_Z^2|}{V_n+\sigma_Z}\leq \frac{|V_n^2-\sigma_Z^2|}{\sigma_Z}.\]
For the second statement, we simply notice that
\[\Sigma_n^2 + (\overline{Z}_n-\mu_Z)^2 = V^2_n,\]
so
\[|\Sigma_n - \sigma_Z|\leq |V_n - \sigma_Z|+|\overline{Z}_n-\mu_Z|\]
and that, under our assumptions
\[\Pr{|\overline{Z}_n-\mu_Z|>\frac{z\Delta_Z}{\sqrt{n}}+ \frac{z^2\,\Delta^2_Z}{12n\,\sigma_Z}}\leq 2e^{-\frac{x^2}{2}}\]
by Bernstein's inequality (Theorem \ref{thm:bernstein}).\end{proof}

\subsection{The trimmed mean is asymptotically efficient for fixed $k$} \label{sub:asymptoticefficiency}

Assume $\{X_i\}_{i=1}^{\infty}$ is an i.i.d.\ random sample with finite mean $\mu$ and variance $\sigma>0$. We sketch here a proof of the fact that the trimmed mean $\overline{X}_{n,k}$ is asymptotically Gaussian when $n\to +\infty$ and $k$ remains fixed. 

To start, recall that the (suitably normalized) sample mean converges to a Gaussian random variable:
\[\forall x\in\R\,:\, \Pr{\frac{\sqrt{n}}{\sigma}(\overline{X}_n-\mu)\leq x} \to \Phi(x).\]
Now
\[\overline{X}_{n,k}-\mu = \frac{n}{n-2k}(\overline{X}_n-\mu) - \frac{1}{n-2k}\left(\sum_{i=1}^{k-1}\,(X_{(i)}-\mu) + \sum_{j=n-k}^{n}\,(X_{(i)}-\mu)\right),\]
so
\[\left|\frac{\sqrt{n}}{\sigma}(\overline{X}_{n,k}-\mu)-\frac{\sqrt{n}}{\sigma}(\overline{X}_n-\mu)\right|\leq \frac{k}{(n-2k)}\left|\frac{\sqrt{n}}{\sigma}(\overline{X}_n-\mu)\right| + \frac{2k\sqrt{n}}{\sigma(n-2k)}\max_{1\leq i\leq n}|X_i-\mu|.\]
The fact that $\sigma<+\infty$ implies that as $n\to +\infty$, 
$\frac{\max_{1\leq i\leq n}|X_i-\mu|}{\sqrt{n}}\to 0\mbox{ almost surely}$. Therefore, the RHS of the preceding display goes to $0$ in probability. We conclude that
\[\forall x\in\R\,:\, \Pr{\frac{\sqrt{n}}{\sigma}(\overline{X}_{n,k}-\mu)\leq x} \to \Phi(x)\]
as well.

\subsection{Minimax lower bounds under moment conditions}\label{sub:minimaxmoments} We recall some minimax lower bounds for estimating the mean $\mu$ under moment conditions. The first one concerns the limits of statistical estimation from i.i.d.\ data under finite moment conditions. 

\begin{proposition}[\cite{dllo2016}, Theorem 3.1] There exists a universal constant $c>0$ such that the following holds. For any $\alpha\in (0,1/2)$, $q\in [1,2]$ and $\nu_q>0$, suppose $r>0$ and $E:\R^n\to \R$ is a measurable function such that
\[\Pr{|E(X_1,\dots,X_n) - \mu|\leq r}\geq 1-\alpha\]
for any $X_1,\dots,X_n$ i.i.d.\ random variables with mean $\mu=\Ex{X_1}$, a cumulative distribution function $F(t):=\Pr{X_1\leq t}$ ($t\in \R$), and with $\Ex{|X_1-\mu|^p}^{1/p}= \nu_q$. Then
\[r\geq c\,\nu_q\,\left(\frac{\log(1/\alpha)}{n}\right)^{\frac{q-1}{q}}.\]
\end{proposition}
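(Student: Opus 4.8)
My plan is to invoke Le Cam's two-point method. It will suffice to exhibit two probability distributions $P_+$ and $P_-$ on $\R$, each with a well-defined mean and with centered $q$-th moment exactly equal to $\nu_q$, such that (i) their means $\mu_+,\mu_-$ obey $|\mu_+-\mu_-|\geq c'\,\nu_q\,(\log(1/\alpha)/n)^{(q-1)/q}$ for a universal $c'>0$, and (ii) the $n$-fold products are statistically close, $d_{\mathrm{TV}}(P_+^{\otimes n},P_-^{\otimes n})<1-2\alpha$. Granting such a pair, I would argue by contradiction: if $r<|\mu_+-\mu_-|/2$ then the events $A_\pm:=\{|E(X_1,\dots,X_n)-\mu_\pm|\leq r\}$ are disjoint, so $A_+\subseteq A_-^c$ and hence $P_-^{\otimes n}(A_+)\leq 1-P_-^{\otimes n}(A_-)\leq\alpha$, while $P_+^{\otimes n}(A_+)\geq 1-\alpha$ by the hypothesis on $E$; this gives $d_{\mathrm{TV}}(P_+^{\otimes n},P_-^{\otimes n})\geq P_+^{\otimes n}(A_+)-P_-^{\otimes n}(A_+)\geq 1-2\alpha$, contradicting (ii). Hence $r\geq|\mu_+-\mu_-|/2$, which is the asserted bound with $c=c'/2$.

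For the family I would take a rare ``spike'' on top of a point mass: fix $p\in(0,1/2]$ and $b>0$ to be chosen, and set $P_\pm:=(1-p)\,\delta_0+p\,\delta_{\pm b}$. Then $\mu_\pm=\pm pb$, so $|\mu_+-\mu_-|=2pb$, and the centered $q$-th moment of $P_\pm$ equals $b^q\left((1-p)p^q+p(1-p)^q\right)$, which for $p\le1/2$ and $q\in[1,2]$ lies between $b^q p/4$ and $2b^q p$. Since this quantity is continuous and strictly increasing in $b$ and sweeps out $(0,\infty)$, I would pick $b=b(p,q,\nu_q)$ making it exactly $\nu_q^q$; then $b$ is of order $\nu_q\,p^{-1/q}$, so $|\mu_+-\mu_-|=2pb$ is of order $\nu_q\,p^{(q-1)/q}$. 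By symmetry $P_+$ and $P_-$ share the same centered $q$-th moment and both lie in the admissible class, so $E$ must satisfy its guarantee under each of them.

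Next I would bound $d_{\mathrm{TV}}(P_+^{\otimes n},P_-^{\otimes n})$ and choose $p$. The key observation is that $P_+$ and $P_-$ differ only through the spike, so one can couple the two product samples by first drawing the set $S\subseteq[n]$ of nonzero coordinates (each coordinate in $S$ independently with probability $p$, a law common to both measures) and then setting coordinate $i$ to $0$ for $i\notin S$ and to $+b$ (resp.\ $-b$) for $i\in S$ under $P_+$ (resp.\ $P_-$). The two samples coincide exactly when $S=\emptyset$, so $d_{\mathrm{TV}}(P_+^{\otimes n},P_-^{\otimes n})\leq 1-(1-p)^n$. Using $(1-p)^n\geq e^{-2pn}$ for $p\le1/2$, this is $<1-2\alpha$ once $p$ is a small enough universal multiple of $\log(1/\alpha)/n$ (and $p\le 1/2$); for such $p\asymp\log(1/\alpha)/n$, property (i) reads $|\mu_+-\mu_-|\asymp\nu_q(\log(1/\alpha)/n)^{(q-1)/q}$, and the two-point step gives $r\geq|\mu_+-\mu_-|/2\gtrsim\nu_q(\log(1/\alpha)/n)^{(q-1)/q}$. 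I would carry this out for, say, $\alpha\le 1/4$; the range $\alpha\in(1/4,1/2)$ only affects universal constants and is handled as in \cite{dllo2016}.

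The step I expect to be the real obstacle is the product-measure estimate just used: a crude subadditivity bound $d_{\mathrm{TV}}(P_+^{\otimes n},P_-^{\otimes n})\leq n\,d_{\mathrm{TV}}(P_+,P_-)=2np$ is worthless, since $np$ is forced to be of order $\log(1/\alpha)\geq 1$, so the whole argument hinges on the multiplicative bound $1-(1-p)^n$ coming from the coupling on the all-zeros sample (equivalently, from the Hellinger affinity, which factorizes over coordinates). This is precisely what pins down the scaling $p\asymp\log(1/\alpha)/n$ and thus the exponent $(q-1)/q$. The remaining points are routine: matching the centered $q$-th moment to $\nu_q$ exactly is a one-variable solve, and the extreme regime in which $p$ would need to exceed $1/2$ (i.e.\ $\log(1/\alpha)$ comparable to or larger than $n$) is exactly where the claimed bound degenerates to $r\gtrsim\nu_q$, consistent with the impossibility of consistent mean estimation under only a $q$-th moment bound, and is treated separately in \cite{dllo2016}.
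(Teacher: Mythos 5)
The paper does not actually prove this proposition --- it is quoted from \cite{dllo2016} (Theorem 3.1) without proof --- so there is no internal argument to compare against; your two-point construction is, in substance, the standard argument behind the cited result. The core of your proposal is correct: the Le Cam reduction, the calibration of the spike pair $P_\pm=(1-p)\delta_0+p\delta_{\pm b}$ so that the centered $q$-th moment is exactly $\nu_q^q$ (your bracketing $[b^qp/4,\,2b^qp]$ checks out for $p\le 1/2$, $q\in[1,2]$), and the multiplicative bound $d_{\mathrm{TV}}(P_+^{\otimes n},P_-^{\otimes n})\le 1-(1-p)^n$ via the shared-spike-set coupling are all sound, and you are right that this coupling (rather than TV subadditivity) is what forces $p\asymp\log(1/\alpha)/n$ and hence the exponent $(q-1)/q$. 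The one point you get wrong is your description of the regime $\log(1/\alpha)\gtrsim n$, where $p$ would have to exceed $1/2$: for $q>1$ the claimed bound there is \emph{stronger} than $r\gtrsim\nu_q$, not degenerate to it, since $(\log(1/\alpha)/n)^{(q-1)/q}>1$; meanwhile your spike family caps out at separation of order $\nu_q$ as $p\to 1$, so it genuinely cannot produce the stated rate in that regime. The standard patch is a two-point Gaussian pair with standard deviation proportional to $\nu_q$ (so that the centered $q$-th moment is exactly $\nu_q^q$), which yields $r\gtrsim \nu_q\sqrt{\log(1/\alpha)/n}$ for all $\alpha\in(0,1/2)$ and dominates $\nu_q(\log(1/\alpha)/n)^{(q-1)/q}$ whenever $\log(1/\alpha)\ge n$, because $t^{1/2}\ge t^{(q-1)/q}$ for $t\ge 1$ and $q\le 2$. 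With that substitution in the extreme regime (and noting that your deferral of $\alpha\in(1/4,1/2)$ is only a constant-factor issue for $\alpha$ bounded away from $1/2$), your proof is complete and self-contained.
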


In fact, the $p=2$ case of this Proposition holds even if $P$ is restricted to be Gaussian \cite[Proposition 5]{catoni2012challenging}. Therefore, assumptions about moments of order $p>2$ do not improve the random fluctuations term in mean estimation. 

The next result considers the case of contaminated data. In this case, higher moments do matter. The next result is essentially the same as \cite[Lemma 5.4]{minsker2018}; only the contamination model is slightly different. 

\begin{proposition}\label{prop:minsker}There exists a universal constant $c>0$ such that the following holds. Let $p\in [1,+\infty)$, $M_p>0$ and $\epsilon\in (6/n,1/2)$. Suppose $r>0$ and $E:\R^n\to \R$ is a measurable function such that
\[\Pr{|E(X^\epsilon_1,\dots,X^\epsilon_n) - \mu|\leq r}\geq \frac{2}{3}\]
for any $\epsilon$-contaminated random sample $X^\epsilon_1,\dots,X^\epsilon_n$ as defined in Theorem \ref{thm:minimaxcontaminated}, where the clean sample satisfies $\Ex{|X_1-\mu|^p}^{1/p}\leq M_p$. Then
\[r\geq c\,M_p\,\epsilon^{\frac{p-1}{p}}.\]
\end{proposition}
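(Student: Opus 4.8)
The plan is to prove this minimax lower bound by a standard two-point (Le Cam) argument adapted to the adversarial contamination model, essentially following \cite[Lemma 5.4]{minsker2018}. I would exhibit two admissible ``clean'' distributions $P_0,P_1$ (both with centered $p$-th moment at most $M_p^p$) whose means differ by a quantity of order $M_p\,\epsilon^{(p-1)/p}$, but such that an $\epsilon$-contamination of an i.i.d.\ $P_0$-sample can be made (by a randomized adversary) statistically almost indistinguishable from an i.i.d.\ $P_1$-sample. Since no estimator can then reliably decide which of the two means generated the data, $r$ cannot be much smaller than the gap between them.

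Concretely, first I would fix a small universal constant $\beta\in(0,1)$ (say $\beta=1/6$), set $\delta:=\tfrac{\beta}{2}\,M_p\,\epsilon^{(p-1)/p}$, and take $P_0:=\delta_{0}$, the point mass at $0$ (mean $0$), and $P_1:=(1-\beta\epsilon)\,\delta_{0}+\beta\epsilon\,\delta_{a}$ with $a:=\delta/(\beta\epsilon)$, so that $P_1$ has mean $\delta$. A direct computation shows that the centered $p$-th moment of $P_1$ is at most $2a^{p}\beta\epsilon=2\delta^{p}(\beta\epsilon)^{-(p-1)}$, which is $\le M_p^{p}$ by the choice of $\delta$ (using $(p-1)/p\le 1$ and $\beta<1$ to keep the constant uniform in $p$); hence both $P_0$ and $P_1$ satisfy the moment constraint of the proposition.

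The technical heart of the argument is the indistinguishability step. Consider two ``worlds'': in world $0$ the clean sample is i.i.d.\ $P_0$ (all zeros) and the adversary acts; in world $1$ the clean sample is i.i.d.\ $P_1$ and the adversary does nothing (a trivial contamination). In world $1$, the number $K$ of coordinates equal to $a$ is $\mathrm{Binomial}(n,\beta\epsilon)$, placed in uniformly random positions. In world $0$, the (randomized) adversary draws $K'\sim\mathrm{Binomial}(n,\beta\epsilon)$ and, if $K'\le\lfloor\epsilon n\rfloor$, overwrites $K'$ uniformly chosen zeros with the value $a$, and otherwise leaves the sample untouched; this changes at most $\lfloor\epsilon n\rfloor$ points, so it is a legal $\epsilon$-contamination. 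Coupling $K=K'$ and matching the positions, the laws of the observed samples in the two worlds coincide on the event $\{K\le\lfloor\epsilon n\rfloor\}$, so their total variation distance is at most $\Pr{\mathrm{Binomial}(n,\beta\epsilon)>\lfloor\epsilon n\rfloor}$. Choosing $\beta$ small enough that $3\beta\epsilon n\le\lfloor\epsilon n\rfloor$ (which holds once $\epsilon n\ge 2$) and invoking the hypothesis $\epsilon>6/n$ so that the mean $\beta\epsilon n$ is bounded below by a constant, a Chernoff bound makes this probability a universal constant $\tau<1/3$.

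Finally I would assemble the pieces. Suppose $E$ satisfies $\Pr{|E-\mu|\le r}\ge 2/3$ uniformly, and suppose for contradiction that $2r<\delta$. Running $E$ in world $0$ (where $\mu=0$) and in world $1$ (where $\mu=\delta$), the events $\{|E|\le r\}$ and $\{|E-\delta|\le r\}$ are disjoint; since the observed laws differ in total variation by $\tau$, we get $\mathbb{P}_{1}\!\left[|E-\delta|\le r\right]\le 1-\mathbb{P}_{1}\!\left[|E|\le r\right]\le 1-\big(\tfrac23-\tau\big)=\tfrac13+\tau<\tfrac23$, contradicting the guarantee. Hence $2r\ge\delta$, that is $r\ge\tfrac{\beta}{4}\,M_p\,\epsilon^{(p-1)/p}$, which is the claim with $c=\beta/4$. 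The only place requiring genuine care is this coupling step: because the contamination budget is exactly $\lfloor\epsilon n\rfloor$, the atom weight $\beta\epsilon$ of $P_1$ must sit a definite constant factor below $\epsilon$, and the hypothesis $\epsilon>6/n$ is precisely what keeps the relevant Binomial from placing too much mass above the budget — everything else is routine.
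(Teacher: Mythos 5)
Your proof is correct and follows essentially the same route as the paper's: a two-point (Le Cam) lower bound in which an $\epsilon$-contamination of one clean sample is coupled so as to be indistinguishable from the other world, with the hypothesis $\epsilon>6/n$ used to keep the Binomial number of altered points below the budget $\lfloor \epsilon n\rfloor$ except with probability safely below $1/3$. The only differences are cosmetic: you construct the two distributions explicitly (a point mass versus a two-point law) and contaminate only one world, whereas the paper imports the pair $P_1,P_2$ from Minsker's Lemma 5.4 and contaminates both toward a common mixture $\hat P$; your version is self-contained, and your moment bound and tail estimate check out.
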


\begin{proof}It suffices to consider the case $M_p=1$. We adapt a strategy due to Minsker \cite[Lemma 5.4]{minsker2018} to our setting. 

The proof of \cite[Lemma 5.4]{minsker2018} shows the following. Given $p>1$, there exist distributions $P_1,P_2$ with centered $p$-th moment $\leq 1$ whose means $\mu_1$, $\mu_2$ satisfy
\[|\mu_1 - \mu_2| \geq 2c\,\epsilon^{1-1/p}\]
which moreover satisfy 
\[\left(1 - \frac{\epsilon}{2}\right)\,P_i + \epsilon Q_i = \hat{P}\,\,(i=1,2)\]
for certain distributions $Q_1,Q_2,\widehat{P}$. Crucially, note that $\hat{P}$ is the same for $i=1,2$, and $c>0$ is universal.

Let us now note that an i.i.d.\ random sample $\hat{X}_1,\dots,\hat{X}_n$ from $\hat{P}$ can be obtained from an i.i.d.\ random sample $X_1,\dots,X_n$ from $P_1$ as follows. First choose i.i.d.\ Bernoulli random variables $B_1,\dots,B_n$ with parameter $\epsilon/2$ independently from $X_1,\dots,X_n$. Now let $\hat{X}_i$ be drawn from $Q_1$ independently from everything else whenever $B_i=1$, and set $\hat{X}_i=X_i$ otherwise. 

From this description, we see that, conditionally on $\sum_{i=1}^nB_i\leq \epsilon n$, $\hat{X}_1,\dots,\hat{X}_n$ is an $\epsilon$ contamination of $X_1,\dots,X_n$, and therefore
\[\Pr{\left.|E(\hat{X}_1,\dots,\hat{X}_n) - \mu_1|> r\right| \sum_{i=1}^nB_i\leq \epsilon n}\leq \frac{1}{3}\]
by our assumption on $r,E$. 
\[\Pr{|E(\hat{X}_1,\dots,\hat{X}_n) - \mu_1|>r}\leq \frac{1}{3} + \Pr{\sum_{i=1}^nB_i>\epsilon n}<\frac{1}{2}\]
where in the last step we have implicitly used Chebyshev's inequality and the assumption that that $1/\epsilon n<1/6$.

We may repeat the above reasoning with $P_2,Q_2$ replacing $P_1,Q_1$ and deduce that we also have
\[\Pr{|E(\hat{X}_1,\dots,\hat{X}_n) - \mu_2|>r}<\frac{1}{2}.\]
In particular, there is a positive probability that 
\[|E(\hat{X}_1,\dots,\hat{X}_n) - \mu_1|\leq r \mbox{ and }|E(\hat{X}_1,\dots,\hat{X}_n) - \mu_2|\leq r\mbox{ happen simultaneously.}\]
This can only be if $r\geq |\mu_1-\mu_2|/2 =c\, \epsilon^{(p-1)/p}$.\end{proof}

\bibliographystyle{apalike}

\begin{thebibliography}{}

\bibitem[Addario-Berry et~al., 2015]{addarioberry2015}
Addario-Berry, L., Bhamidi, S., Bubeck, S., Devroye, L., Lugosi, G., and
  Oliveira, R.~I. (2015).
\newblock Exceptional rotations of random graphs: A vc theory.
\newblock {\em Journal of Machine Learning Research}, 16(57):1893--1922.

\bibitem[Boucheron et~al., 2013]{boucheron2013concentration}
Boucheron, S., Lugosi, G., and Massart, P. (2013).
\newblock {\em Concentration Inequalities: A Nonasymptotic Theory of
  Independence}.
\newblock OUP Oxford.

\bibitem[Bubeck et~al., 2013]{bubeck2013}
Bubeck, S., Cesa-Bianchi, N., and Lugosi, G. (2013).
\newblock Bandits with heavy tail.
\newblock {\em IEEE Transactions on Information Theory}, 59(11):7711--7717.

\bibitem[Catoni, 2012]{catoni2012challenging}
Catoni, O. (2012).
\newblock Challenging the empirical mean and empirical variance: a deviation
  study.
\newblock In {\em Annales de l'IHP Probabilit{\'e}s et statistiques},
  volume~48, pages 1148--1185.

\bibitem[de~la Pe\~{n}a et~al., 2009]{delaPea2009}
de~la Pe\~{n}a, V.~H., Lai, T.~L., and Shao, Q.-M. (2009).
\newblock {\em Self-Normalized Processes}.
\newblock Springer Berlin Heidelberg.

\bibitem[Devroye et~al., 2016]{dllo2016}
Devroye, L., Lerasle, M., Lugosi, G., Oliveira, R.~I., et~al. (2016).
\newblock Sub-gaussian mean estimators.
\newblock {\em The Annals of Statistics}, 44(6):2695--2725.

\bibitem[Diakonikolas and Kane, 2019]{diakonikolas2019}
Diakonikolas, I. and Kane, D.~M. (2019).
\newblock Recent advances in algorithmic high-dimensional robust statistics.

\bibitem[Hall, 1981]{Hall1981}
Hall, P. (1981).
\newblock {Large sample property of Jaeckel's adaptive trimmed mean}.
\newblock {\em Annals of the Institute of Statistical Mathematics},
  33(A):449--462.

\bibitem[Hogg, 1974]{Hogg1974}
Hogg, R.~V. (1974).
\newblock Adaptive robust procedures: A partial review and some suggestions for
  future applications and theory.
\newblock {\em Journal of the American Statistical Association},
  69(348):909--923.

\bibitem[Huber, 1964]{Huber1964}
Huber, P.~J. (1964).
\newblock {Robust Estimation of a Location Parameter}.
\newblock {\em The Annals of Mathematical Statistics}, 35(1):73 -- 101.

\bibitem[Huber, 1972]{Huber1972}
Huber, P.~J. (1972).
\newblock {The 1972 Wald Lecture Robust Statistics: A Review}.
\newblock {\em The Annals of Mathematical Statistics}, 43(4):1041 -- 1067.

\bibitem[Jaeckel, 1971]{Jaeckel1971}
Jaeckel, L.~A. (1971).
\newblock {Some Flexible Estimates of Location}.
\newblock {\em The Annals of Mathematical Statistics}, 42(5):1540 -- 1552.

\bibitem[Jana~Jureckova, 1994]{Jureckova1981}
Jana~Jureckov\'{a}, Roger~Koenker, A. H.~W. (1994).
\newblock {Adaptive choice of trimming proportions}.
\newblock {\em Annals of the Institute of Statistical Mathematics},
  46(4):737--755.

\bibitem[Jing et~al., 2003]{Jing2003}
Jing, B.-Y., Shao, Q.-M., and Wang, Q. (2003).
\newblock Self-normalized Cram\'{e}r-type large deviations for independent random
  variables.
\newblock {\em The Annals of Probability}, 31(4).

\bibitem[Lee and Valiant, 2022]{lee2022optimal}
Lee, J.~C. and Valiant, P. (2022).
\newblock Optimal sub-gaussian mean estimation in $\mathbb{R}$.
\newblock In {\em 2021 IEEE 62nd Annual Symposium on Foundations of Computer
  Science (FOCS)}, pages 672--683. IEEE.

\bibitem[Lee and Valiant, 2020]{lee2020}
Lee, J. C.~H. and Valiant, P. (2020).
\newblock Optimal sub-gaussian mean estimation in $\mathbb{R}$.

\bibitem[Lee, 2004]{Lee2004}
Lee, J.-Y. (2004).
\newblock Adaptive choice of trimming proportions for location estimation of
  the mean.
\newblock {\em Communications in Statistics - Simulation and Computation},
  33(3):673--684.

\bibitem[Lugosi and Mendelson, 2019a]{lugosi2019B}
Lugosi, G. and Mendelson, S. (2019a).
\newblock {Mean Estimation and Regression Under Heavy-Tailed Distributions}.
\newblock {\em Foundations of Computational Mathematics}, 19(19):1145 -- 1190.

\bibitem[Lugosi and Mendelson, 2019b]{lugosi2019}
Lugosi, G. and Mendelson, S. (2019b).
\newblock {Sub-Gaussian estimators of the mean of a random vector}.
\newblock {\em The Annals of Statistics}, 47(2):783 -- 794.

\bibitem[Lugosi and Mendelson, 2021]{lugosi2021}
Lugosi, G. and Mendelson, S. (2021).
\newblock {Robust multivariate mean estimation: The optimality of trimmed
  mean}.
\newblock {\em The Annals of Statistics}, 49(1):393 -- 410.

\bibitem[L{e}ger and Romano, 1990]{LegerRomano1990}
L \'{e}ger, C. and Romano, J.~P. (1990).
\newblock Bootstrap adaptive estimation: The trimmed-mean example.
\newblock {\em The Canadian Journal of Statistics / La Revue Canadienne de
  Statistique}, 18(4):297--314.

\bibitem[Maller, 1988]{Maller1988}
Maller, R.~A. (1988).
\newblock {Asymptotic Normality of Trimmed Means in Higher Dimensions}.
\newblock {\em The Annals of Probability}, 16(4):1608 -- 1622.

\bibitem[Minsker, 2018]{minsker2018}
Minsker, S. (2018).
\newblock Uniform bounds for robust mean estimators.
\newblock {\em arXiv preprint arXiv:1812.03523}.

\bibitem[Minsker, 2023]{minsker23a}
Minsker, S. (2023).
\newblock Efficient median of means estimator.
\newblock In Neu, G. and Rosasco, L., editors, {\em Proceedings of Thirty Sixth
  Conference on Learning Theory}, volume 195 of {\em Proceedings of Machine
  Learning Research}, pages 5925--5933. PMLR.

\bibitem[Minsker, 2024]{MinskerUstat}
Minsker, S. (2024).
\newblock U-statistics of growing order and sub-gaussian mean estimators with
  sharp constants.
\newblock {\em Mathematical statistics and learning}, 7(1/2):1--39.

\bibitem[Minsker and Ndaoud, 2021]{minsker2021robust}
Minsker, S. and Ndaoud, M. (2021).
\newblock Robust and efficient mean estimation: an approach based on the
  properties of self-normalized sums.
\newblock {\em Electronic Journal of Statistics}, 15(2):6036--6070.

\bibitem[Okamoto, 1958]{Okamoto1958}
Okamoto, M. (1958).
\newblock {Some inequalities relating to the partial sum of binomial
  probabilities}.
\newblock {\em Annals of the Institute of Statistical Mathematics}, 10(1):73 --
  101.

\bibitem[Oliveira and Resende, 2023]{oliveira2023}
Oliveira, R.~I. and Resende, L. (2023).
\newblock Trimmed sample means for robust uniform mean estimation and
  regression.

\bibitem[Rocke et~al., 1982]{Rocke1982}
Rocke, D.~M., Downs, G.~W., and Rocke, A.~J. (1982).
\newblock Are robust estimators really necessary?
\newblock {\em Technometrics}, 24(2):95--101.

\bibitem[Shao and Wang, 2013]{Shao2013}
Shao, Q.-M. and Wang, Q. (2013).
\newblock Self-normalized limit theorems: A survey.
\newblock {\em Probability Surveys}, 10(none).

\bibitem[{Shi Jian, Zheng Zhongguo }, 1996]{Jian1996}
{Shi Jian, Zheng Zhongguo } (1996).
\newblock Choice of optimal trimming proportion by the random weighting method.
\newblock {\em Acta Mathematica Sinica}, 12:326--336.

\bibitem[Stigler, 1973]{Stigler1973}
Stigler, S.~M. (1973).
\newblock The asymptotic distribution of the trimmed mean.
\newblock {\em The Annals of Statistics}, 1(3):472--477.

\bibitem[Stigler, 1977]{Stigler1977}
Stigler, S.~M. (1977).
\newblock {Do Robust Estimators Work with Real Data?}
\newblock {\em The Annals of Statistics}, 5(6):1055 -- 1098.

\bibitem[Tukey, 1962]{Tuckey1962}
Tukey, J.~W. (1962).
\newblock The future of data analysis.
\newblock {\em The Annals of Mathematical Statistics}, 33(1):1--67.

\bibitem[{Wilfrid J. Dixon, Karen K. Yuen}, 1974]{Dixon1974}
{Wilfrid J. Dixon, Karen K. Yuen} (1974).
\newblock Trimming and winsorization: A review.
\newblock {\em Statistische Hefte}, 15.

\end{thebibliography}

\end{document}